\documentclass[12pt,a4paper,reqno]{amsart}

\usepackage{amssymb}
\usepackage{enumerate}

\frenchspacing

\addtolength{\textwidth}{3cm}
\addtolength{\hoffset}{-1.5cm}
\addtolength{\textheight}{3cm}
\addtolength{\voffset}{-1.5cm}

\newtheorem{theorem}{Theorem}[section]
\newtheorem{proposition}[theorem]{Proposition}
\newtheorem{lemma}[theorem]{Lemma}
\newtheorem{corollary}[theorem]{Corollary}

\theoremstyle{definition}
\newtheorem{definition}[theorem]{Definition}
\newtheorem{notation}[theorem]{Notation}
\newtheorem{remark}[theorem]{Remark}

\newtheorem{example}[theorem]{Example}

\newcommand{\suchthat}{\;\ifnum\currentgrouptype=16 \middle\fi|\;}

\newcommand{\p}{\mathbb{P}}
\renewcommand{\k}{\mathrm{k}}
\newcommand{\Aut}{\mathrm{Aut}}
\newcommand{\End}{\mathrm{End}}

\newcommand{\GL}{\mathrm{GL}}
\newcommand{\SL}{\mathrm{SL}}
\newcommand{\SAut}{\mathrm{SAut}}
\newcommand{\SJ}{\mathrm{SJ}}
\newcommand{\J}{\mathrm{J}}
\newcommand{\car}{\mathrm{char}}

\newcommand{\Ker}{\mathrm{Ker}}
\renewcommand{\Im}{\mathrm{Im}}
\newcommand{\m}{\mathfrak{m}}

\newcommand{\SAff}{\mathrm{SAff}}
\newcommand{\Aff}{\mathrm{Aff}}
\newcommand{\A}{\mathbb{A}}
\newcommand{\Jac}{\mathrm{Jac}}
\newcommand{\C}{\mathbb{C}}
\newcommand{\N}{\mathbb{N}}
\newcommand{\Z}{\mathbb{Z}}

\DeclareMathOperator{\Bir}{Bir}

  \usepackage[all]{xy}
\xyoption{matrix}

\title{Conjugacy classes of special automorphisms of the affine spaces}
\author{J\'er\'emy Blanc}
\address{Mathematisches Institut \\ 
Universit\"at Basel \\
Rheinsprung 21 \\
CH-4051 Basel \\
Switzerland}
\email{jeremy.blanc@unibas.ch}\thanks{The author gratefully acknowledges support by the Swiss National Science Foundation Grant  ``Birational Geometry'' PP00P2\_153026 /1 and by the French National Research Agency Grant ``BirPol'', ANR-11-JS01-004-01.}
\date{\today}

\begin{document}
\begin{abstract}
In the group of polynomial automorphisms of the plane, the conjugacy class of an element is closed if and only if the element is diagonalisable. In this article, we show that this does not hold for the group of special automorphisms, giving then a first step towards the direction of showing that this group is not simple, as an infinite-dimensional algebraic group.
\end{abstract}
\subjclass[2010]{14R10,14R20}
\maketitle
\section{Introduction}
In this article, $\k$ will always denote an algebraically closed field.
The conjugacy classes of the algebraic groups $\GL(n,\k)$ and $\SL(n,\k)$ are well-known. In particular, the following observation is classical:

{\it An element is diagonalisable if and only if its conjugacy class is Zariski-closed.}

As observed in \cite{MauFur}, the same holds for the group $\Aut(\A^2_\C)$ of complex polynomial automorphisms of the affine plane. Here, the topology corresponds to the topology of $\Aut(\A^n_\k)$ induced by families parametrised by algebraic varieties $A$, called morphisms $A\to \Aut(\A^n_\k)$ and corresponding to elements of $\Aut(\A^n_{\k[A]})$ (see $\S\ref{SubSec:Topology}$).

In fact, there is one easy direction in the result of \cite{MauFur}, which corresponds to show that if the conjugacy class is closed, then the element is diagonalisable. This works over any algebraically closed field $\k$ and follows from the following observation: If $f\in \Aut(\A^n_\k)$ is an element that fix the origin, the conjugation of $f$ by $$(x_1,\dots,x_n)\to \left(\frac{1}{t} x_1,\dots,\frac{1}{t}x_n\right)$$
yields an element of $\Aut(\A^n_{\k(t)})$ whose value at $t=0$ is the linear part of $f$, which is an element of $\GL(n,\k)$. Moreover elements of $\Aut(\A^2_\k)$ which do not have fix points are easy to handle (these are conjugate to $(x_1,x_2)\mapsto (x_1+1,ax_2+P(x_2))$ for some polynomial $P\in \k[x_2]$ and $a\in \k^*$).

In this article, we focus on the closed normal subgroup $\SAut(\A^n_\k)$ of $\Aut(\A^n_\k)$ of elements of Jacobian $1$. We will show that the conjugacy classes of the two groups $\SAut(\A^n_\k)$ and $\Aut(\A^n_\k)$ have a very different behaviour. 

We say that an element $f\in\Aut(\A^n_\k)$ is \emph{dynamically regular} if the extensions of $f$ and $f^{-1}$ to $\p^n_\k$ have disjoint indeterminacy loci (see $\S\ref{SubSec:Dynamical}$). We will also say that $f$ is \emph{algebraic} if $\{\deg(f^n)\}_{n\in \mathbb{N}}$ is bounded. The dynamically regular elements are never algebraic, and in dimension~$2$, non-algebraic elements are conjugate to dynamically regular elements (see Remark~\ref{Rem:AlgebraicSJ}).

The first result that we obtain is to show that there is no degeneration of conjugates of dynamically regular elements in $\SAut(\A^n_\k)$, contrary to the case of $\Aut(\A^n_\k)$:

\begin{theorem}\label{Thm:Regular}
Let $f\in \SAut(\A^n_\k)$ be a dynamically regular element.
\begin{enumerate}[$(1)$]
\item
If $\alpha\in \SAut(\A^n_{\k((t))})$ is such that $\alpha f \alpha^{-1}$ has a value at $t=0$, then this value is conjugate to $f$ by $\alpha(0)$ in $\SAut(\A^n_\k)$ $($in particular $\alpha$ is defined at $t=0)$. 
\item
For each integer $d$, the set $\{gfg^{-1}\mid g\in \SAut(\A^n_\k), \deg(g)\le d\}$ is closed.
\item
If $n=2$, the conjugacy class of $f$ in $\SAut(\A^n_\k)$ is closed.
\item
If $\k$ is uncountable, the following holds: For each morphism $A\to \SAut(\A^n_{\k})$, where $A$ is an algebraic variety, the preimage of the conjugacy class of $f$ contains the closure of each locally closed subset $B\subset A$ that it contains.
\end{enumerate}
\end{theorem}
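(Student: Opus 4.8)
The plan is to prove $(1)$ by a geometric analysis at infinity, and to deduce $(2)$, $(3)$ and $(4)$ from it by formal manipulations with the topology of $\S\ref{SubSec:Topology}$.

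\emph{Part $(1)$, the heart of the matter.} Write $R=\k[[t]]$, $K=\k((t))$ and put $g=\alpha f\alpha^{-1}$. By hypothesis $g\in\SAut(\A^n_R)$, hence $g^m\in\SAut(\A^n_R)$ for every $m\ge 1$. It suffices to show $\alpha\in\SAut(\A^n_R)$: then $\alpha_0:=\alpha\bmod t$ is automatically in $\SAut(\A^n_\k)$ (reduce $\alpha\alpha^{-1}=\mathrm{id}$ modulo $t$), and reducing $g\alpha=\alpha f$ modulo $t$ gives $g_0=\alpha_0 f\alpha_0^{-1}$, which is the claim. Assume, for contradiction, that $\alpha\notin\SAut(\A^n_R)$. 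Pass to the birational self-maps of $\PP^n$ associated with $f$, $\alpha$, $g$, and recall from $\S\ref{SubSec:Dynamical}$ the structure of a dynamically regular $f$ with $d:=\deg f\ge 2$: the indeterminacy locus $X_+$ of $f$ and the indeterminacy locus $X_-$ of $f^{-1}$ are disjoint and contained in the hyperplane at infinity $H$, with $\dim X_++\dim X_-=n-2$; $f$ contracts $H\setminus X_+$ into $X_-$ and $f^{-1}$ contracts $H\setminus X_-$ into $X_+$; and $\deg(f^m)=d^m$ for all $m$. Working with models over $\mathrm{Spec}\,R$, the failure of $\alpha$ to be defined at $t=0$ is witnessed by a non-empty "limiting indeterminacy locus" in the special fibre $\PP^n_\k$; using the relation $g^m\alpha=\alpha f^m$ and the fact that $g^m$ \emph{is} defined at $t=0$, this limiting locus must be invariant under (or carried along by) the dynamics of $f$ at infinity. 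The disjointness of $X_+$ and $X_-$, together with the growth $\deg(f^m)=d^m\to\infty$, then force $\alpha f^m\alpha^{-1}$ to acquire a pole at $t=0$ of order tending to infinity with $m$, contradicting $\alpha f^m\alpha^{-1}=g^m\in\SAut(\A^n_R)$. Turning the last two sentences into an actual proof — isolating the right numerical measure of the obstruction of a map to being defined at $t=0$ and controlling it under conjugation by the $f^m$ — is the main obstacle; the regularity of $f$ is exactly what is needed there, and the rest of $(1)$ is bookkeeping on $\PP^n_R$.

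\emph{Part $(2)$.} The elements of $\SAut(\A^n_\k)$ of degree $\le d$ form an algebraic variety $V_d$ (see $\S\ref{SubSec:Topology}$), and $g\mapsto gfg^{-1}$ is a morphism $V_d\to V_{d'}$ for a suitable $d'$; its image is constructible, so it remains to see that the image is closed. Let $h$ lie in its closure. By the definition of the topology there is then a $K$-point $\alpha$ of $V_d$ — that is, an $\alpha\in\SAut(\A^n_K)$ with $\deg\alpha\le d$ (possibly after a ramified base change $t\mapsto t^{1/e}$, which is harmless) — such that $\alpha f\alpha^{-1}$ is defined over $R$ with value $h$ at $t=0$. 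By $(1)$, $\alpha\in\SAut(\A^n_R)$, its reduction $\alpha_0$ lies in $\SAut(\A^n_\k)$ with $\deg\alpha_0\le\deg\alpha\le d$ (reduction modulo $t$ cannot raise the degree), and $h=\alpha_0 f\alpha_0^{-1}$. Thus $h\in\{gfg^{-1}\mid\deg g\le d\}$, and this set is closed.

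\emph{Part $(3)$.} For $n=2$ the conjugacy class of $f$ is the increasing union $\bigcup_d\{gfg^{-1}\mid\deg g\le d\}$ of the closed sets of $(2)$. Here the two-dimensional structure enters: $\SAut(\A^2_\k)$ is the amalgamated product of Jung--van der Kulk, a dynamically regular $f$ acts loxodromically on the associated tree (Remark~\ref{Rem:AlgebraicSJ}), and any conjugate $gfg^{-1}$ can, after sliding $g$ along the axis of $f$ into a normalised position, be realised with $\deg g$ bounded in terms of $\deg(gfg^{-1})$; in particular $\{gfg^{-1}\mid\deg(gfg^{-1})\le e\}=\{gfg^{-1}\mid\deg g\le\psi(e)\}$ for some function $\psi$. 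Hence the intersection of the conjugacy class with the (closed) locus $\{\deg\le e\}$ equals the closed set $\{gfg^{-1}\mid\deg g\le\psi(e)\}$ for every $e$, so the conjugacy class is closed.

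\emph{Part $(4)$.} Let $\varphi\colon A\to\SAut(\A^n_\k)$ be a morphism, $C$ the preimage of the conjugacy class of $f$, and $B\subseteq C$ a locally closed subset; we may assume $B$ irreducible. Write $C=\bigcup_{d\ge 1}C_d$ with $C_d=\varphi^{-1}(\{gfg^{-1}\mid\deg g\le d\})$, which is \emph{closed} by $(2)$. Then $B=\bigcup_d(B\cap C_d)$ is a countable increasing union of closed subsets of $B$; since $\k$ is uncountable, $B$ is not a countable union of proper closed subsets, so $B\cap C_d=B$ for some $d$, i.e. $B\subseteq C_d$, whence $\overline{B}\subseteq C_d\subseteq C$. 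This Baire-type step is the only place where the uncountability of $\k$ is used, and it takes the place of the degree bound available in dimension $2$.
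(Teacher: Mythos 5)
Your reductions of $(2)$, $(3)$ and $(4)$ to $(1)$ follow essentially the same route as the paper (valuative criterion for closedness of the image of $g\mapsto gfg^{-1}$; a degree bound for the conjugating element in dimension $2$ obtained by normalising $h$ along powers of $f$, cf.\ Proposition~\ref{Prop:ConjclassclosedHenon}; the countable-increasing-union argument over an uncountable field). Two small points there: in $(3)$ the displayed equality $\{gfg^{-1}\mid\deg(gfg^{-1})\le e\}=\{gfg^{-1}\mid\deg g\le\psi(e)\}$ should be an inclusion $\subseteq$, followed by intersecting with the closed set $\{\deg\le e\}$ (as in Proposition~\ref{Prop:IfConstructibleThenClosed}); and in $(1)$ the step ``$\alpha\in\SAut(\A^n_R)$ implies $\alpha^{-1}\in\SAut(\A^n_R)$'' is not formal --- it is exactly Lemma~\ref{Lem:FormalInverse}, and it is one of the places where the Jacobian-$1$ hypothesis is used (for $\Aut$ it fails: $(tx_1+x_2,x_2)$ has no pole but degenerates out of the group).

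The genuine gap is in Part $(1)$, which you yourself flag: the sentence ``this limiting locus must be invariant under (or carried along by) the dynamics of $f$ at infinity \dots\ the disjointness of $X_+$ and $X_-$ then force $\alpha f^m\alpha^{-1}$ to acquire a pole of order tending to infinity with $m$'' is the entire content of the theorem, and no argument is given for it. Moreover the shape of the argument you propose (iterating $f$ and tracking degree growth $\deg(f^m)=d^m$) is not the one that works most directly; the paper's proof needs no iteration at all. Concretely: write $\nu(\alpha)=-m<0$ and $\alpha=(t^{-m}\alpha_1,\dots,t^{-m}\alpha_n)$ with $\tilde\alpha=(\alpha_1(0),\dots,\alpha_n(0))\ne 0$, and let $X_\alpha\subset H_\infty$ be the image of $\tilde\alpha$ projectivised. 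If $X_\alpha\not\subset I_f$, then some component $a_i$ of the top homogeneous part of $f$ satisfies $a_i(\tilde\alpha_1,\dots,\tilde\alpha_n)\ne 0$, whence $\nu(f\alpha)=-m\deg(f)<-m=\nu(\alpha)$; since $f\alpha=\alpha\,(\alpha^{-1}f\alpha)$ and composing $\alpha$ on the left with a pole-free map cannot lower the valuation below $\nu(\alpha)$, the conjugate $\alpha^{-1}f\alpha$ must have a pole (Proposition~\ref{Prop:Ifpoles}). Dynamical regularity enters exactly once: $X_\alpha\ne\emptyset$ cannot lie in both $I_f$ and $I_{f^{-1}}$, so one applies the above to $f$ or to $f^{-1}$ and concludes via Lemma~\ref{Lem:FormalInverse} (Corollary~\ref{Coro:LimitConjugatesInside}). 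Without this (or an equivalent) quantitative valuation argument, your proof of $(1)$ --- and hence of the whole theorem --- is incomplete. Incidentally, the dimension count $\dim X_++\dim X_-=n-2$ you invoke is not part of the paper's definition of dynamical regularity and is not needed.
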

\begin{remark} The proof of this result is given in Section~$\ref{Sec:ConjRegular}$. As we will show, part $(1)$ imply the others.

In the notation of \cite{FurKra}, Assertion $(4)$ can be reinterpreted by saying that the conjugacy class $C(f)$ of a dynamically regular element $f$ is \emph{weakly-closed} in $\SAut(\A^n_\k)$.
\end{remark}
In dimension $2$, an easy consequence of Theorem~\ref{Thm:Regular} and of the Jung-Van der Kulk theorem is the fact the conjugacy class of non-algebraic elements of $\SAut(\A^2_\k)$ is in fact closed, contrary to the case of $\Aut(\A^2_\k)$. In fact, we can be really more precise: we describe in Section \ref{Sec:ConjclassDim2} the conjugacy classes of elements in $\SAut(\A^2_\k)$, and decide which one is closed. In particular, we obtain the following complete description:

\begin{theorem}\label{Thm:Dimension2}
Let $f\in \SAut(\A^2_\k)$. Then, one of the following holds:
\begin{enumerate}[$(1)$]
\item
If $f$ is diagonalisable, its conjugacy class is closed.
\item
If $f$ is algebraic but not diagonalisable, its conjugacy class is not closed. More precisely, there exists an element $F\in \SAut(\A^2_{\k[t]})$ such that for each $t\not=0$, $F(t)\in \SAut(\A^2_K)$ is conjugate to $f$, and $F(0)\in \SAut(\A^2_\k)$ is diagonalisable.
\item
If $f$ is not algebraic, its conjugacy class is closed.
\end{enumerate}
\end{theorem}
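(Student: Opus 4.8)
The plan is to treat the three cases separately, reducing the algebraic ones to normal forms via the Jung--Van der Kulk theorem. Case~(3) is essentially a formal consequence of Theorem~\ref{Thm:Regular}: by Remark~\ref{Rem:AlgebraicSJ} a non-algebraic $f$ is conjugate in $\Aut(\A^2_\k)$ to a dynamically regular element; factoring the conjugating automorphism as a Jacobian-$1$ automorphism followed by a dilatation $(x,y)\mapsto(cx,y)$, and observing that such a dilatation sends a dynamically regular element to another dynamically regular element of Jacobian~$1$, one gets that $f$ is conjugate \emph{inside} $\SAut(\A^2_\k)$ to a dynamically regular $h$. Then $C(f)=C(h)$, which is closed by Theorem~\ref{Thm:Regular}(3).

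For the algebraic cases I would first invoke Jung--Van der Kulk together with Bass--Serre theory: $\Aut(\A^2_\k)$ is the amalgamated product of the affine and de Jonqui\`eres subgroups over their intersection, so an element with bounded degrees is elliptic and hence conjugate into a factor; a Jacobian adjustment as above lets one take this conjugation inside $\SAut(\A^2_\k)$. Thus an algebraic $f$ is conjugate in $\SAut$ to an affine map $v\mapsto Av+b$ with $A\in\SL_2(\k)$, or to a de Jonqui\`eres map $(x,y)\mapsto(\beta^{-1}x+P(y),\beta y+\gamma)$. I would then normalise each of these, still up to $\SAut$-conjugacy: translate a fixed point to the origin when there is one; use transitivity of $\SL_2(\k)$ on $\k^2\setminus\{0\}$ to bring a pure translation to $(x,y)\mapsto(x+1,y)$; and conjugate a de Jonqui\`eres map by a suitable $(x,y)\mapsto(x+Q(y),y)$ to delete from $P$ every monomial $y^k$ with $\beta^{k+1}\ne1$ (the operator $Q\mapsto Q(\beta y)-\beta^{-1}Q(y)$ being invertible on the corresponding subspace — this is where one must watch $\car\k$ and whether $\beta$ is a root of unity). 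After this, the diagonalisable elements are exactly the linear diagonal maps $\ell_a\colon(x,y)\mapsto(ax,a^{-1}y)$, and every other normal form retains a nonzero unipotent or translational part.

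Case~(2) is then the explicit degeneration: for a non-diagonalisable normal form $f$, conjugating by $h_s\colon(x,y)\mapsto(sx,s^{-1}y)$ or by $h_s^{-1}$ — both in $\SAut(\A^2_{\k[s,s^{-1}]})$ — gives an element $F$ of $\SAut(\A^2_{\k[s]})$ with $F(s)$ conjugate to $f$ for all $s\ne0$ and $F(0)$ linear diagonal, which is the assertion. For instance $(x+P(y),y)$ is carried to $(x+sP(sy),y)$, which degenerates to the identity; the translations $(x+1,y)$, $(x,y+1)$ and the shear $(x+y,y)$ likewise degenerate to the identity; $(-x+y,-y)$ degenerates to $(-x,-y)$; the de Jonqui\`eres forms with $\beta$ a root of unity degenerate to $(\beta^{-1}x,\beta y)$; and a fixed-point-free de Jonqui\`eres form $(x+P(y),y+\gamma)$ is first conjugated into $(x,y+\gamma)$ by some $(x,y)\mapsto(x+Q(y),y)\in\SAut(\A^2_\k)$ — the difference operator $Q\mapsto Q(y+\gamma)-Q(y)$ being onto $\k[y]$ — and then rescaled to the identity.

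Case~(1): if $f$ is diagonalisable then $f\cong\ell_a$ in $\SAut(\A^2_\k)$ (here $C_{\SAut}(\ell_a)=C_{\Aut}(\ell_a)$, since $(x,y)\mapsto(cx,y)$ commutes with $\ell_a$ and every conjugate of $\ell_a$ has Jacobian~$1$ by multiplicativity of $\Jac$), so it is enough to prove $C(\ell_a)$ closed. Given $\alpha\in\SAut(\A^2_{\k((t))})$ with $g:=\alpha\ell_a\alpha^{-1}$ admitting a value $g_0$ at $t=0$, one first notes $g_0$ is algebraic: $\deg(g^n)=\deg(\alpha\ell_a^n\alpha^{-1})\le\deg(\alpha)\deg(\alpha^{-1})$ since $\deg(\ell_a^n)=1$, and specialising at $t=0$ does not raise degrees. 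So $g_0$ is one of the normal forms above; since $\overline{C(\ell_a)}$ is a union of conjugacy classes and, by Case~(2), no non-diagonalisable algebraic class is closed, it remains to show $g_0\cong\ell_a$ — in particular that the ``eigenvalues'' $a,a^{-1}$ do not jump. I would obtain this by following the fixed point $q=\alpha(0,0)$ of $g$ and the differential $dg|_q$, whose characteristic polynomial equals $(\lambda-a)(\lambda-a^{-1})$ for every $t$: if $q$ specialises to an affine point $q_0$, then $dg_0|_{q_0}$ has eigenvalues $a,a^{-1}$, and this forces $g_0\cong\ell_a$ — when $a^m=1$ because then $g_0^m=\mathrm{id}$, so $g_0$ is conjugate to its linear part; and when $a$ is not a root of unity because an algebraic automorphism with eigenvalues $a,a^{-1}$ at a fixed point has trivial unipotent part, and a hyperbolic $\mathbb{G}_m$-action on $\A^2$ is linearisable. \textbf{The main obstacle} is exactly this last step when $\alpha$ has a pole at $t=0$, so that the fixed point $q$ may run off to infinity: one then has to pass to $\PP^2_\k$, or to the quotient $\A^2/\langle g\rangle$, to see that the conjugation-invariant eigenvalue pair nonetheless survives the degeneration — or, for $\k=\C$, appeal to \cite{MauFur} together with the closedness of $\SAut(\A^2_\C)$ in $\Aut(\A^2_\C)$. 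Controlling this degeneration of $\alpha$ is where the real work lies.
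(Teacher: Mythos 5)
Your treatment of Case~(3) matches the paper's (reduce to a H\'enon/dynamically regular element inside $\SAut$ and quote Theorem~\ref{Thm:Regular}), and your degenerations for Families $(i)$--$(iii)$ by diagonal conjugation are exactly those of $\S\ref{SubSecFamilyiiandiiitoi}$. But there are two genuine gaps. First, in Case~(2) your claim that the difference operator $Q\mapsto Q(y+\gamma)-Q(y)$ is onto $\k[y]$ is false in characteristic $p>0$: by Lemma~\ref{Lemma:ExactSeqP} its image is a complement of the infinite-dimensional space $V=\{y^{p-1}P(y^p)\}$, so the elements $(x_1+(x_2)^{p-1}P((x_2)^p),x_2+1)$ with $P\ne 0$ are \emph{not} conjugate to $(x_1,x_2+1)$ (Proposition~\ref{Prop:ConjclassesSJ}$(iv)$; they are exactly the non-$p$-torsion fixed-point-free elements, cf.\ Remark~\ref{Remark:Pnotidentity}). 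For these, diagonal conjugation by $(tx_1,t^{-1}x_2)^{\pm1}$ has no value at $t=0$, so your degeneration scheme breaks down precisely on this family; the paper needs the explicit non-diagonal conjugator of Proposition~\ref{Prop:IVtoI} (with the term $\lambda x_1^q+\frac1t$) to produce the degeneration. This is the hardest part of Case~(2) and it is missing from your argument.

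Second, Case~(1) is not proved: you yourself flag the step where the fixed point of $\alpha\ell_a\alpha^{-1}$ may escape to infinity as ``where the real work lies,'' and that is indeed the content of the paper's Proposition~\ref{Prp:Conjclassdiagclosed}. The missing ingredient is the degree bound of Lemma~\ref{Lem:ConjugateSmalldegree}: using the amalgamated product one shows that any conjugate $g=\alpha^{-1}f\alpha$ of $f=(\mu x_1,\mu^{-1}x_2)$ can be realised with $\deg\alpha\le\deg g$. This is what legitimises the reduction of closedness of $C(f)\cap\SAut(\A^2_\k)_{\le d}$ to a valuative statement about a \emph{bounded-degree} family of conjugators (Corollary~\ref{Coro:LimitConjugateValuation}) --- without it, a family of conjugates need not come from a single $\k((t))$-family of conjugators at all --- and it is also what allows one to track the eigenvalue $\mu$ (via the eigenvector $(\alpha^{-1})^*(x_1)$, suitably normalised) and to show the fixed point stays affine (the fixed locus is cut out by two equations, hence has a component dominating the curve, giving a section through the limit point). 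Your fallback to \cite{MauFur} only works over $\C$ and for $\Aut$, not $\SAut$ over an arbitrary algebraically closed field, which is the setting of the theorem.
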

\begin{remark}
The set $\SAut(\A^2_\k)_{\mathrm{alg}}$of algebraic elements of $\SAut(\A^2_\k)$ being closed (Corollary~\ref{Coro:Algclosed}), we can decompose $\SAut(\A^2_\k)$ as a infinite union of disjoint closed sets, one being $\SAut(\A^2_\k)_{\mathrm{alg}}$ and the others being conjugacy classes of non-algebraic elements. The group $\SAut(\A^2_\k)$ is however irreducible, by the simple observation made above.
\end{remark}
\begin{remark}
Note that these results show that the group $\SAut(\A^2_\k)$ is more rigid than the group $\Aut(\A^2_\k)$, in the sense that there are less possible degenerations of conjugates. 

One can check, using the conjugations around fixed points as above, that every normal closed subgroup of $\Aut(\A^2_\k)$ is either trivial, $\SAut(\A^2_\k)$ or $\Aut(\A^2_\k)$. The interesting question is then to know whether $\SAut(\A^2_\k)$ contains non-trivial closed normal subgroups (it contains many non-trivial normal subgroups by \cite{FurLam}, which contains only non-algebraic elements, except the identity).

The fact that the conjugacy classes of non-algebraic elements is closed goes towards the direction of indicating that $\SAut(\A^2_\k)$ could contain non-trivial closed normal subgroups. This text can then be viewed as a first step towards the study of the simplicity of $\SAut(\A^n_\k)$, viewed as an infinite-dimensional algebraic group (ind-group). In \cite{Sha1}, it is claimed that this one is simple, but the proof contains serious gaps. 
\end{remark}

The author thanks Jean-Philippe Furter and Hanspeter Kraft for interesting discussions during the preparation of this article and for giving him access to the preprint \cite{FurKra}.

\section{Preliminaries}
As we said, in the sequel $\k$ will always be an algebraically closed field. We will sometimes also work on a general field (most of the time with an extension of $\k$), and will denote  it by $K$.

In this section, we introduce the terminology and give some basic results (most of them classical, maybe in other terms) on the topology of $\Aut(\A^n_\k)$ ($\S\ref{SubSec:Topology}$), the relation between the iterations of a map and the indeterminacy sets at infinity ($\S\ref{SubSec:Dynamical}$) and the families of automorphisms parametrised by formal series ($\S\ref{SubSec:Families}$), that we will need in the next sections (Sections~\ref{Sec:ConjRegular} and \ref{Sec:ConjclassDim2}).
\subsection{Topology on $\Aut(\A^n_\k)$}\label{SubSec:Topology}
\begin{notation}
Let $R$ be any commutative unitary ring. 

$(1)$ We denote by $\End(\A^n_R)$ the set of algebraic endomorphisms of $\A^n_R$. An element $f\in \End(\A^n_R)$ is given by
$$f\colon (x_1,\dots,x_n)\mapsto (f_1(x_1,\dots,x_n),\dots,f_n(x_1,\dots,x_n))$$
for some polynomials $f_1,\dots,f_n\in R[x_1,\dots,x_n]$. The degree of $f$ is by definition the maximal degree of the $f_i$, and we will use the notation
$$f=(f_1,\dots,f_n).$$
This corresponds to a natural bijection $\End(\A^n_R)\to (R[x_1,\dots,x_n])^n.$

$(2)$ The group $\Aut(\A^n_R)$ is equal to the group of automorphisms of $\A^n_R$, i.e.~to the elements of $\End(\A^n_R)$ that admit an inverse in this set.

$(3)$ For each $f\in \End(\A^n_R)$, we denote by $\Jac(f)=\det \left(\frac{\partial f_i}{x_j}\right)_{i,j=1}^n\in R[x_1,\dots,x_n]$ the Jacobian of $f$, and denote by $\SAut(\A^n_R)$ the normal subgroup of $\Aut(\A^n_R)$ given by $\{f\in \Aut(\A^n_R)\mid \Jac(f)=1\}$.

$(4)$ We denote by $\End(\A^n_R)_{\le d}$ and $\Aut(\A^n_R)_{\le d}$ the subsets of $\End(\A^n_R)$ and $\Aut(\A^n_R)$ respectively, given by elements of degree $\le d$.
\end{notation}
\begin{example}\label{Example:Triangular}
For each $p_1\in R[x_1],p_2\in R[x_1,x_2],\dots,p_{n-1}\in R[x_1,\dots,x_{n-1}]$ and $a_1,\dots,a_n\in R^*$ the element
$$(a_1x_1,a_2x_2+p_1,a_3x_3+p_2,\dots,a_nx_n+p_{n-1})$$
belongs to $\Aut(\A^n_R)$. Such elements are usually called \emph{triangular}, or \emph{de Jonqui\`eres}.
\end{example}
\begin{remark}\label{Rem:ConjJac}
Suppose that $R$ is a field $K$. Extending the scalars to an algebraically closed field, we observe that  the Jacobian matrix of every element of $\Aut(\A^n_K)$ is invertible everywhere, so  $\Jac(f)\in K^{*}$. In particular, $$\Aut(\A^n_K)\subset \{f\in \End(\A^n_K)\mid \Jac(f)\in K^{*}\}$$ and the equality is the classical Jacobian conjecture, open for any $n\ge 2$.
\end{remark}

If $Z$ is an algebraic variety defined over $\k$, where $\k$ is algebraically closed as before, there is a natural way to endow the group $\Bir(X)$ of birational transformations of $Z$ with a topology (see for example \cite{De}, \cite{Se}, \cite{Bl} or \cite{BF}). When restricted to the subgroup $\Aut(Z)$ of automorphisms, we obtain the following:
\begin{definition}\label{Defi:MorphismAut}
Let $A,Z$ be two algebraic varieties defined over $\k$. We say that a \emph{morphism} $f\colon A\to \Aut(Z)$ is a map given by an $A$-automorphism of $A\times Z$.

The \emph{Zariski topology} on $\Aut(Z)$ is defined as follows: a set $F\subset \Aut(Z)$ is closed if and only if $f^{-1}(F)\subset A$ is closed for any algebraic variety $A$ and any morphism $f\colon A\to \Aut(Z)$.
\end{definition}
\begin{remark}
When the group $\Aut(Z)$ has a natural structure of an algebraic group (for example when $Z=\p^n$), the topology defined above agrees with the classical topology of the algebraic group, and morphisms $A\to \Aut(Z)$ correspond to morphisms of algebraic varieties.

However, in general the group $\Aut(Z)$ is too big to be an algebraic variety, for instance for $Z=\A^n$, $n\ge 2$ (see Example~\ref{Example:Triangular}).
\end{remark}

We will observe (in Lemma~\ref{Lem:Shafa} below) that this topology, when restricted to $\Aut(\A^n_\k)$, gives the one introduced by Shafarevich in \cite{Sha1}, i.e.~the inductive limit topology given by the inclusion of affine algebraic varieties $$\Aff(\A^n_\k)=\Aut(\A^n_\k)_{\le 1}\subset\Aut(\A^n_\k)_{\le 2}\subset \Aut(\A^n_\k)_{\le 3}\subset \dots$$
and corresponds in fact to an infinite-dimensional algebraic group (or ind-group). In order to do this, we recall how one obtains natural structures of affine varieties of 
$\Aut(\A^n_\k)_{\le d}$ and 
$\SAut(\A^n_\k)_{\le d}$.
\begin{lemma}\label{Lemm:StructureAffine}
Let us fix some integers $d,n\ge 1$, and see $\End(\A^n_\k)_{\le d}$ as an affine space, via the bijection $\End(\A^n_\k)\to (\k[x_1,\dots,x_n]_{\le d})^n$. Then, the following hold:
\begin{enumerate}[$(1)$]
\item
$J_{\k^*}=\{ f\in \End(\A^n_\k)_{\le d}\mid \Jac(f)\in \k^*\}$ is locally closed is $\End(\A^n_\k)_{\le d}$, and inherits from it the structure of an affine variety.
\item
$J_{1}=\{ f\in \End(\A^n_\k)_{\le d}\mid \Jac(f)=1\}$ is closed in $\End(\A^n_\k)_{\le d}$.
\item
$\Aut(\A^n_\k)_{\le d}$ is a closed subset of 
$J_{\k^*}$.
\item
$\SAut(\A^n_\k)_{\le d}$ is a closed subset of 
$J_1$.
\end{enumerate}
\end{lemma}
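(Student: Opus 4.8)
The plan is to establish the four assertions essentially in order, since each one concerns a locus cut out by polynomial conditions on the coefficients of the component polynomials $f_i \in \k[x_1,\dots,x_n]_{\le d}$.

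\medskip

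\emph{Assertion (1).} First I would observe that $\Jac(f)$ is a polynomial in $x_1,\dots,x_n$ whose coefficients are polynomials in the coefficients of the $f_i$; write $\Jac(f) = \sum_\alpha c_\alpha(f)\, x^\alpha$, where each $c_\alpha$ is a regular function on the affine space $\End(\A^n_\k)_{\le d}$. The condition $\Jac(f)\in \k^*$ means exactly that $c_\alpha(f) = 0$ for all $\alpha \neq 0$ while $c_0(f) \neq 0$. Thus $J_{\k^*}$ is the intersection of the closed set $V(\{c_\alpha : \alpha\neq 0\})$ with the open set $\{c_0 \neq 0\}$, hence locally closed, and as such inherits a natural structure of (quasi-affine, in fact affine after the standard trick of inverting $c_0$) variety.

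\medskip

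\emph{Assertions (2) and (4).} Similarly, $J_1$ is cut out by $c_\alpha(f) = 0$ for $\alpha \neq 0$ together with $c_0(f) = 1$, all closed conditions, so $J_1$ is closed in $\End(\A^n_\k)_{\le d}$. Once (3) is known, (4) is immediate: $\SAut(\A^n_\k)_{\le d} = \Aut(\A^n_\k)_{\le d}\cap J_1$, and $\Aut(\A^n_\k)_{\le d}$ is closed in $J_{\k^*}$ by (3), hence closed in the closed subset $J_1 \subset J_{\k^*}$; intersecting the two closed sets gives the claim.

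\medskip

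\emph{Assertion (3), the main obstacle.} The heart of the matter is to show $\Aut(\A^n_\k)_{\le d}$ is closed in $J_{\k^*}$. The key point is a degree bound: if $f\in\Aut(\A^n_\k)$ has degree $\le d$, then $f^{-1}$ has degree $\le d^{n-1}$ (the classical Gabber--Bass--Connell--Wright bound). So fix $e = d^{n-1}$ and consider, inside $\End(\A^n_\k)_{\le d}\times\End(\A^n_\k)_{\le e}$, the closed subvariety $W$ defined by the polynomial equations expressing that the two coordinates $g,h$ satisfy $g\circ h = \mathrm{id}$ and $h\circ g = \mathrm{id}$ (composition of polynomial maps is polynomial in the coefficients, so these are genuinely closed conditions). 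Then $\Aut(\A^n_\k)_{\le d}$ is the image of $W\cap(J_{\k^*}\times\End(\A^n_\k)_{\le e})$ under the first projection. To conclude closedness I would argue that this projection restricted to $W$ is injective and a \emph{closed} immersion onto its image: the inverse $h$ of $g\in J_{\k^*}$ is unique, and its coefficients are \emph{regular functions} of those of $g$ (one can solve for them, e.g.\ via Cramer-type formulas arising from $g\circ h = \mathrm{id}$ once $\Jac(g)\in\k^*$ is inverted, or simply note the projection $W\to J_{\k^*}$ is a bijective morphism whose inverse is a morphism). Hence the image is closed. The delicate bookkeeping here is to make sure the section $g\mapsto(g,g^{-1})$ really is a morphism of varieties on $J_{\k^*}$, using that $c_0(g)=\Jac(g)$ is an invertible regular function there; granting this, $\Aut(\A^n_\k)_{\le d}\cong W\cap(J_{\k^*}\times\End(\A^n_\k)_{\le e})$ is closed in $J_{\k^*}$, and we are done.
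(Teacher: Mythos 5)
Your treatment of parts (1), (2) and (4) is correct and essentially the same as the paper's: the coefficients $c_\alpha$ of $\Jac(f)$ are regular functions on the affine space $\End(\A^n_\k)_{\le d}$, so $J_1$ and $J_\k$ are closed and $J_{\k^*}$ is a principal open subset of the affine variety $J_\k$, hence affine; and (4) does follow from (3) by intersecting with $J_1$.

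Part (3), however, has a genuine gap exactly at the point you flag as ``delicate bookkeeping''. You realise $\Aut(\A^n_\k)_{\le d}$ as the image of the closed set $W\cap(J_{\k^*}\times\End(\A^n_\k)_{\le e})$ under the first projection, and you argue that this projection is injective, indeed an isomorphism onto its image. But the second factor $\End(\A^n_\k)_{\le e}$ is an affine space, not a complete variety, so the projection is not a closed map, and the image of a closed set under it need not be closed --- even when the map is an isomorphism onto its image. The hyperbola $V(xy-1)\subset\A^2$ projecting isomorphically onto $\A^1\setminus\{0\}$ is the standard counterexample: ``isomorphism onto the image'' only yields that the image is locally closed, which is not what is needed. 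The danger here is concrete: a family $g_t\in\Aut(\A^n_\k)_{\le d}$ could a priori converge in $J_{\k^*}$ to some $g_0$ while the inverses $g_t^{-1}$ escape to infinity in $\End(\A^n_\k)_{\le e}$, and your argument gives no control over this. (Your fallback --- that the coefficients of $g^{-1}$ are regular functions of those of $g$ --- would close the gap only if you produced a morphism $\sigma\colon J_{\k^*}\to\End(\A^n_\k)_{\le e}$ defined on \emph{all} of $J_{\k^*}$, not just on the invertible elements, and then wrote $\Aut(\A^n_\k)_{\le d}=\{g\mid \sigma(g)\circ g=\mathrm{id}\}$; note also that $g\circ h=\mathrm{id}$ is not linear in the coefficients of $h$, so ``Cramer'' does not apply as stated. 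Such a $\sigma$ can be built by truncated formal inversion, but that is a different argument from the one you wrote.) The paper circumvents the problem by compactifying the space of candidate inverses: it takes $g$ in a \emph{projective} space $W_{d^{n-1}}$ of tuples of homogeneous forms up to scalar, imposes the closed condition that the composition with $f$ be a multiple, \emph{possibly zero}, of the identity, and then uses completeness of $W_{d^{n-1}}$ to conclude that the projection of this closed incidence variety is closed; the degenerate limits where the composition is $0$ are finally excluded on $J_{\k^*}$ by a dominance argument. Some such compactness (or an honest extension of the inversion map) is indispensable, and your proof as written supplies neither.
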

\begin{proof}The Jacobian being a morphism $\End(\A^n_\k)_{\le d}\to \k[x_1,\dots,x_n]_{\le n(d-1)}$, the sets 
$$J_k=\{ f\in \End(\A^n_\k)_{\le d}\mid \Jac(f)\in \k\}\mbox{ and }J_1$$ are closed in $\End(\A^n_\k)_{\le d}$ and are thus affine algebraic varieties. Since $J_0=\{ f\in \End(\A^n_\k)_{\le d}\mid \Jac(f)=0\}$ is given by one equation in $J_\k$, the set $J_{\k^*}=J_\k\setminus J_0$ is affine (and locally closed in $\End(\A^n_\k)_{\le d}$). This yields assertions $(1)$ and $(2)$.

In order to show $(3)$ and $(4)$, we define $W_d$ to be the set of non-zero  $(n+1)$-uples $(h_0,\dots,h_n)$
of homogeneous polynomials $h_i\in \k[x_0,\dots,x_n]$ of degree $d$, where $h_0=\mu x_0^d$, $\mu\in \k$,
up to linear equivalence: $(h_0,\dots,h_n)\sim (\lambda h_0,\dots,\lambda h_n)$ for any $\lambda\in \k^{*}$.
The equivalence class of $(h_0,\dots,h_n)$ will be denoted by $[h_0:\dots:h_n]$. 
The set of homogeneous polynomials of degree $d$ in $n+1$ variables being a $\k$-vector space, this gives to $W_d$ a canonical structure of projective space.  We then denote by $B_d\subset W_d$ the hyperplane given by $h_0=0$ and obtain a canonical isomorphism of affine spaces
$$\begin{array}{ccc}
W_d\setminus B_d&\stackrel{\sim}{\longrightarrow}& \End(\A^n_\k)_{\le d}\\
\ [x_0^d:h_1:\dots:h_n]&\mapsto &(h_1(1,x_1,\dots,x_n),\dots,h_n(1,x_1,\dots,x_n)).\end{array}$$

We denote by $Y\subseteq  W_{d^{n-1}} \times (W_d\setminus B_d)$ the set consisting of elements $(g,f)$, such that $h:=(g_0(f_0,\ldots,f_n) , \ldots , g_n(f_0,\ldots,f_n) )$ is a multiple (maybe $0$) of the identity, i.e. $h_ix_j=h_jx_i$ for all $i,j$.  
The description of $Y$ shows that it is closed in $ W_{d^{n-1}} \times (W_d\setminus B_d) $. Since $W_{d^{n-1}}$ is a complete variety, the projection $p_2 \colon W_{d^{n-1}} \times (W_d\setminus B_d) \to (W_d\setminus B_d)$ is a Zariski-closed morphism, so $p_2(Y)$ is closed in $(W_d\setminus B_d)\simeq \End(\A^n_\k)_{\le d}$.

In order to show $(3)$ and $(4)$, we only need to show that 
$p_2(Y)\cap J_{\k^*}=\Aut(\A^n_\k)_{\le d}$, which implies that $p_2(Y)\cap J_{1}=\SAut(\A^n_\k)_{\le d}$. We then show both inclusions.

$(i)$ If $f\in \Aut(\A^n_\k)_{\le d}$, there exists $g\in \Aut(\A^n_\k)_{\le d^{n-1}}$ such that $g\circ f=\mathrm{id}$ (\cite[Theorem 1.5, page 292]{BCW}). In consequence, we obtain $(g,f)\in Y$, so $f\in p_2(Y)$. The fact that $f$ belongs to $J_{\k^*}$ is given by Remark~\ref{Rem:ConjJac}, so $\Aut(\A^n_\k)_{\le d}\subseteq p_2(Y)\cap J_{\k^*}$.

$(ii)$ Let $(g,f)\in Y$, with $f\in J_{\k^*}$. By definition of $Y$, the element $(g_0(f_0,\ldots,f_n)$, $\ldots$, $g_n(f_0,\ldots,f_n) )$ is a multiple of the identity. There exists some $j$ such that $g_j\not=0$. The fact that $f\in J_{\k^*}$ implies that $f\colon \A^n_\k\to \A^n_\k$ is dominant. Hence, $g_j(f_0(1,x_0,\dots,x_n)$, $\dots$, $f_n(1,x_0,\dots,x_n))$ is not equal to zero.  This implies that $g\in \End(\A^{n-1})_{\le d^{n-1}}$ and that $g\circ f=\mathrm{id}$. Hence, $f\in \Aut(\A^n_\k)_{\le d}$. This yields $p_2(Y)\cap J_{\k^*}\subseteq \Aut(\A^n_\k)_{\le d}$.
\end{proof}
\begin{lemma}\label{Lem:Shafa}
Let us fix some integers $d,n\ge 1$, and see $\Aut(\A^n_\k)_{\le d}$ and $\SAut(\A^n_\k)_{\le d}$ as affine varieties, via their inclusion in $\End(\A^n_\k)_{\le d}\simeq (\k[x_1,\dots,x_n]_{\le d})^n$ $($see Lemma~$\ref{Lemm:StructureAffine})$. Then, the following hold:
\begin{enumerate}[$(1)$] 
\item
Morphisms $A\to \Aut(\A^n_\k)$ correspond to elements of $\Aut(\A^n_{\k[A]})$.
\item
For each $\k$-algebraic variety $A$, the morphisms $A\to \Aut(\A^n_\k)$ that have image in $\Aut(\A^n_\k)_{\le d}$ correspond to morphisms of algebraic varieties $A\to \Aut(\A^n_\k)_{\le d}$.
\item
The set $\Aut(\A^n_\k)_{\le d}$ is closed in $\Aut(\A^n_\k)$, and the restriction of the topology of $\Aut(\A^n_\k)$ on it yields the topology of its algebraic variety structure.
\item
A subset of $\Aut(\A^n_\k)$ is closed if and only if its intersection with $\Aut(\A^n_\k)_{\le d}$ is closed, for each integer $d\ge 1$.
\end{enumerate}
Moreover, everything works the same replacing $\Aut$ with $\SAut$.
\end{lemma}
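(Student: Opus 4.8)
The plan is to reduce everything to Lemma~\ref{Lemm:StructureAffine} together with the description of $Y$ used there, exploiting the fact that the degree of an inverse of an element of $\Aut(\A^n_\k)_{\le d}$ is bounded by $d^{n-1}$ (the result of \cite{BCW} already invoked).

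\begin{proof}[Proof proposal]
We first treat $(1)$. Given a morphism $f\colon A\to \Aut(\A^n_\k)$, by Definition~\ref{Defi:MorphismAut} it is an $A$-automorphism of $A\times \A^n$; writing it in coordinates gives $n$ polynomials $f_i\in \k[A][x_1,\dots,x_n]$, i.e.\ an element of $\End(\A^n_{\k[A]})$, and the inverse $A$-automorphism provides an inverse in $\End(\A^n_{\k[A]})$, so we land in $\Aut(\A^n_{\k[A]})$. Conversely, an element of $\Aut(\A^n_{\k[A]})$ is by definition a pair of elements of $\End(\A^n_{\k[A]})=(\k[A][x_1,\dots,x_n])^n$ composing to the identity in both orders, which is exactly the data of an $A$-automorphism of $A\times\A^n$, hence a morphism $A\to\Aut(\A^n_\k)$. (For $\SAut$ one adds the closed condition $\Jac=1$ on both sides.)

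For $(2)$: a morphism $A\to\Aut(\A^n_\k)$ with image in $\Aut(\A^n_\k)_{\le d}$ is, by $(1)$, an element $f\in\Aut(\A^n_{\k[A]})$ each of whose $n$ components has degree $\le d$ in $x_1,\dots,x_n$; the coefficients of these components are then regular functions on $A$, which is precisely a morphism of varieties $A\to \End(\A^n_\k)_{\le d}$, and it factors through the closed subvariety $\Aut(\A^n_\k)_{\le d}$ of Lemma~\ref{Lemm:StructureAffine}(3). Conversely any morphism of varieties $A\to\Aut(\A^n_\k)_{\le d}$ gives, by composing with the inclusion into $\End(\A^n_\k)_{\le d}\simeq(\k[x_1,\dots,x_n]_{\le d})^n$, an $f\in\End(\A^n_{\k[A]})$ with components of degree $\le d$; the point is that $f$ is then automatically an $A$-automorphism: for each closed point $a\in A$ the map $f(a)\in\Aut(\A^n_\k)_{\le d}$ has an inverse of degree $\le d^{n-1}$, and running through the variety $Y\subseteq W_{d^{n-1}}\times(W_d\setminus B_d)$ from the proof of Lemma~\ref{Lemm:StructureAffine} one obtains (after possibly passing to a cover, or directly because $p_2|_Y$ has a section over $\Aut(\A^n_\k)_{\le d}$ coming from the uniqueness of the inverse) a morphism $A\to W_{d^{n-1}}$ whose composite with $p_2$ is $f$, giving a polynomial inverse with coefficients in $\k[A]$. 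Thus $f\in\Aut(\A^n_{\k[A]})$ and by $(1)$ it is a morphism $A\to\Aut(\A^n_\k)$; this is the step I expect to be the main obstacle, namely checking carefully that the inverse can be chosen with coefficients regular on all of $A$ and not merely pointwise.

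For $(3)$: closedness of $\Aut(\A^n_\k)_{\le d}$ in $\Aut(\A^n_\k)$ means that for every morphism $g\colon A\to\Aut(\A^n_\k)$ the preimage $g^{-1}(\Aut(\A^n_\k)_{\le d})$ is closed in $A$; but this preimage is cut out by vanishing of the coefficients of $x_1,\dots,x_n$-monomials of degree $>d$ in the components of the associated $g\in\Aut(\A^n_{\k[A]})$, hence closed. To identify the induced topology with the variety topology on $\Aut(\A^n_\k)_{\le d}$, one inclusion is that every variety-closed subset is closed in the induced topology; for this, if $F\subseteq\Aut(\A^n_\k)_{\le d}$ is variety-closed and $g\colon A\to\Aut(\A^n_\k)$ is any morphism, then $g^{-1}(F)=g'^{-1}(F)$ where $g'$ is the restriction of $g$ to the closed set $g^{-1}(\Aut(\A^n_\k)_{\le d})$ viewed as a morphism of varieties into $\Aut(\A^n_\k)_{\le d}$ by $(2)$, hence closed. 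The reverse inclusion, that induced-closed implies variety-closed, follows by taking $A=\Aut(\A^n_\k)_{\le d}$ with $g$ the identity morphism, which is a morphism $A\to\Aut(\A^n_\k)$ by $(2)$. Finally $(4)$ is immediate from $(3)$: if $S\subseteq\Aut(\A^n_\k)$ meets each $\Aut(\A^n_\k)_{\le d}$ in a closed set, then for any morphism $g\colon A\to\Aut(\A^n_\k)$ one has $g^{-1}(S)=\bigcup_d\bigl(g^{-1}(S\cap\Aut(\A^n_\k)_{\le d})\bigr)$, and since $A$ is noetherian the increasing sequence of closed sets $g^{-1}(\Aut(\A^n_\k)_{\le d})$ stabilises, so $g^{-1}(S)$ equals one $g^{-1}(S\cap\Aut(\A^n_\k)_{\le d})$, which is closed; the converse is trivial. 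All arguments use only the Jacobian condition as a closed condition, so they apply verbatim with $\SAut$ in place of $\Aut$, using Lemma~\ref{Lemm:StructureAffine}(4).
\end{proof}
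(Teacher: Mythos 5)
Your overall route is the same as the paper's: identify morphisms $A\to\Aut(\A^n_\k)$ with elements of $\Aut(\A^n_{\k[A]})$, use the affine structures from Lemma~\ref{Lemm:StructureAffine} for $(2)$ and $(3)$, and deduce $(4)$ from the fact that any single morphism lands in some $\Aut(\A^n_\k)_{\le m}$. Parts $(1)$ and $(3)$ are fine. Two steps need repair.

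First, in the converse direction of $(2)$ you correctly isolate the real issue (regularity of the inverse over $A$), but your proposed fix does not work as stated: the fibres of $p_2|_Y$ over $\Aut(\A^n_\k)_{\le d}$ being singletons makes $p_2|_Y$ a proper bijective morphism, and a bijective morphism of varieties need not admit a regular section (Frobenius in characteristic $p$ is the standard obstruction; "passing to a cover" does not return you to coefficients in $\k[A]$). The way to close this is to show that $f\mapsto f^{-1}$ is a morphism of varieties $\Aut(\A^n_\k)_{\le d}\to\Aut(\A^n_\k)_{\le d^{n-1}}$: after splitting off the translation by $f(0)$ and the linear part (whose inverse is regular in the coefficients of $f$ because $\Jac(f)\in\k^*$ on $J_{\k^*}$), the coefficients of the formal inverse of the remaining unipotent part are universal polynomials in the coefficients of $f$, and by \cite{BCW} only the coefficients up to degree $d^{n-1}$ are nonzero. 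Equivalently, the $A$-endomorphism $(a,x)\mapsto(a,f(a)(x))$ is bijective with everywhere-invertible relative Jacobian, hence an isomorphism. (The paper's own proof asserts the pullback is an $A$-automorphism without comment, so you are at least as careful as the source here, but your $Y$-based justification is the one piece that would fail.)

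Second, in $(4)$ your appeal to noetherianity is incorrect: an \emph{increasing} chain of closed subsets of a noetherian space need not stabilise (take an enumeration of the closed points of $\A^1$ over $\overline{\mathbb{F}_p}$). The correct and simpler reason, which is the paper's, is already contained in your part $(1)$: the morphism $g\colon A\to\Aut(\A^n_\k)$ corresponds to an element of $\Aut(\A^n_{\k[A]})$ whose components are honest polynomials of some finite degree $m$, so the image lies entirely in $\Aut(\A^n_\k)_{\le m}$ and $g^{-1}(S)=g^{-1}(S\cap\Aut(\A^n_\k)_{\le m})$, which is closed by $(3)$. With these two corrections your argument matches the paper's.
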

\begin{proof} We do the proof with $\Aut$, the same proof works replacing $\Aut$ with $\SAut$.

The map $$\begin{array}{ccc}
\End(\A^n_\k)_{\le d}\times \A^n_\k&\to& \End(\A^n_\k)_{\le d}\times \A^n_\k\\
(f,x)&\mapsto &(f,f(x))\end{array}$$
being a morphism of algebraic varieties, every morphism of algebraic varieties $A\to \Aut(\A^n_\k)_{\le d}$ yields an $A$-automorphism of $A\times \A^n_\k$, and thus a morphism $A\to \Aut(\A^n_\k)$ with image in $\Aut(\A^n_\k)_{\le d}$. 

Conversely, let $f\colon A\to \Aut(\A^n_\k)$ be a morphism. By definition, this is given by an $A$-automorphism of $A\times \A^n_\k$. Composing with the projection $A\times \A^n_\k\to \A^n_\k$ and then with the function~$x_i$ on~$\A^n_\k$, we obtain an element $f_i\in \k[A\times \A^n_\k]=\k[A][x_1,\dots,x_n]$. Hence, $(f_1,\dots,f_n)$ is an element of $\Aut(\A^n_{\k[A]})$. Such an element yields a morphism of algebraic varieties $A\to \End(\A^n_\k)_{\le m}$ for some $m\in\mathbb{N}$, having image in $\Aut(\A^n_\k)$.  This yields $(1)$ and $(2)$. Assertion $(3)$ also follows, after observing that the preimage of $\Aut(\A^n_\k)_{\le i}$ is a closed subset of $A$, for each $i\ge 0$.

It remains to show $(4)$. Let $Y\subset \Aut(\A^n_\k)$ be any subset. If $Y$ is closed in $\Aut(\A^n_\k)$, then $Y\cap \Aut(\A^n_\k)_{\le d}$ is closed (in $\Aut(\A^n_\k)$ or $\Aut(\A^n_\k)_{\le d}$) for each $d$, by $(3)$. Suppose conversely that $Y\cap \Aut(\A^n_\k)_{\le d}$ is closed for each $d$. In order to show that $Y$ is closed in $\Aut(\A^n_\k)$, we take any morphism $A\to \Aut(\A^n_\k)$, and show that the preimage of $Y$ is closed. As we observed before, we can see this morphism as a morphism of algebraic varieties $A\to \Aut(\A^n_\k)_{\le m}$ for some $m$. Since $Y\cap \Aut(\A^n_\k)_{\le m}$ is closed, the preimage of $Y$ is closed.
\end{proof}
\subsection{Dynamical properties and the behaviour at infinity}\label{SubSec:Dynamical}
In the sequel, we fix a canonical open embedding $\A^n_\k\to \p^n_\k$ given by $$(x_1,\dots,x_n)\mapsto [1:x_1:\dots:x_n]$$
and denote by $H_\infty\subset \p^n_\k$ the complement $H_\infty=\p^n_\k\setminus \A^n_\k$, which is a hyperplane.
Once this is fixed, we have a canonical extension of any element of $\End(\A^n_\k)$ to a rational map $\p^n_\k\dasharrow \p^n_\k$. In particular, this yields inclusions $$\Aut(\A^n_\k)\longrightarrow \Bir(\A^n_\k)\stackrel{\simeq}{\longrightarrow} \Bir(\p^n_\k).$$ 
The automorphisms of degree $1$ (affine automorphisms) correspond to those which extend to elements of $\Aut(\p^n_\k)$. The others extend to birational maps which are not automorphisms, and we can associate to them two sets, which are classical objects in dynamics (see for example \cite{Si,GuSi,Bisi}):
\begin{definition}
For each $f\in \Aut(\A^n_\k)$ of degree $\ge 2$, we define $I_f\subset \p^n_\k$ to be the indeterminacy locus of the extension of $f$ to $\p^n_\k$, and $X_f\subset \p^n_\k$ to be the image of $H_\infty\setminus (I_f)$.
\end{definition}
In order to see the sets $I_f,X_f$ explicitely, we use the following definition.
\begin{definition}
If $f=(f_1,\dots,f_n)\in \Aut(\A^n_\k)$ is an element of degree~$d$ and $a_i$ is the homogeneous part of $f_i$ of degree~$d$, for $i=1,\dots,n$, we say that $(a_1,\dots,a_n)\in \End(\A^n_\k)$ is the \emph{highest homogeneous part of $f$}.
\end{definition}
\begin{remark}
If $(a_1,\dots,a_n)$ is the highest homogeneous part of $f\in \Aut(\A^n_\k)$, the set $I_f\subset \p^n_\k$ is given by
$$x_0=0, a_i=\dots=a_n=0$$
and is a proper closed subset of the hyperplane at infinity $H_\infty$ (given by $x_0=0$).

Moreover, the set $X_f$ is equal to 
$$\{[0:a_1(x_1,\dots,x_n):\dots:a_n(x_1,\dots,x_n)]\mid [0:x_1:\dots:x_n]\in H_\infty\setminus I_f\}.$$
 \end{remark}
 \begin{remark}[The regular terminology]In dynamics, the elements $f\in \Aut(\A^n_\k)$ such that $I_f\cap I_{f^{-1}}=\emptyset$ are called ``regular'' and elements such that $X_f\cap I_f=\emptyset$ are called ``weakly-regular'' in \cite{GuSi} or ``quasi-regular'' in \cite{Bisi}. 
 The use of this terminology in algebraic geometry can be confusing, because of the common use of the word regular for maps (in fact all elements of $\Aut(\A^n_\k)$ are  morphisms, hence biregular). This is why we will say ``dynamically regular'' if $I_f\cap I_{f^{-1}}=\emptyset$; will will not use the words ``quasi-regular'' or ``weakly-regular''.\end{remark}
 \begin{remark}
 Fixing the degree $d$, we obtain an algebraic variety $\Aut(\A^n_\k)_d$. It follows from the definition that the set of dynamically regular elements of $\Aut(\A^n_\k)_d$ is an open subset (in general not dense). However, the set of dynamically regular elements of $\Aut(\A^n_\k)_{\le d}$ (and thus of $\Aut(\A^n_\k)$) is not open in general. For $n=2$, this can be seen by taking for example $$f=(-x_2,x_1+(x_2)^2)\in \SAut(\A^2_\k), \alpha=(x_1+t(x_2)^2,x_2)\in \SAut(\A^2_{\k[t]})$$ and considering the family $\beta=\alpha f \alpha^{-1}\in \SAut(\A^2_{\k[t]})$. Then, $\beta(t)$ is not  dynamically regular for $t\in \k^*$ but $\beta(0)=f$ is dynamically regular.
\end{remark}
\begin{remark}\label{Remark:Dimension2Xf}
If $n=2$, we find $X_f=I_{f^{-1}}$. Indeed, the extension of an element $f\in \Aut(\A^2_\k)$ of degree $>1$ is an element of $\Bir(\p^2_\k)$ which contracts the line at infinity onto one point $X_f$. The inverse of this birational map is then defined at any other point of $H_\infty$, so we find $X_f=I_{f^{-1}}$.
\end{remark}
 
We now show that the degree of a composition is determined by the sets $I_f$ and $X_f$ defined above. The following results are classical in the world of dynamics, we recall the easy proofs for self-containedness.
\begin{lemma}\label{Lem:ConditionDegree}
Let $f,g\in \Aut(\A^n_\k)$ be of degree $\ge 2$. Then, the following are equivalent:
\begin{enumerate}[$(1)$]
\item $\deg(g f)=\deg(g)\cdot \deg(f),$
\item The set $X_f$ is not contained in $I_{g}$.
\end{enumerate}
Moreover, if both conditions hold, then $X_{gf}\subset X_g$.
\end{lemma}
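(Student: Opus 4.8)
The plan is to work with the highest homogeneous parts and track the degree of a composition by a standard elimination-of-indeterminacy argument on $\p^n_\k$. Write $d=\deg(f)$, $e=\deg(g)$, let $(a_1,\dots,a_n)$ be the highest homogeneous part of $f$ and $(b_1,\dots,b_n)$ the highest homogeneous part of $g$. In homogeneous coordinates, the extension of $f$ to $\p^n_\k$ is given by $[x_0:\dots:x_n]\mapsto [x_0^d:F_1:\dots:F_n]$ with $F_i$ the degree-$d$ homogenisation of $f_i$, so that $F_i\equiv a_i \pmod{x_0}$; similarly for $g$ with degree-$e$ forms $G_j$, $G_j\equiv b_j\pmod{x_0}$. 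Composing, $gf$ is represented by $[x_0^{de}: G_1(x_0^d,F_1,\dots,F_n):\dots:G_n(x_0^d,F_1,\dots,F_n)]$, which are forms of degree $de$; the actual degree of $gf$ is $de$ minus the degree of the common factor of these $n+1$ forms. So $\deg(gf)=de$ if and only if this $(n+1)$-tuple of degree-$de$ forms has no common factor, equivalently defines a morphism on no hyperplane component — and the only candidate common factor that can arise is a power of $x_0$, since for a generic point of $\A^n_\k$ no cancellation occurs (here one uses that $f,g$ are automorphisms, so $gf$ is dominant and the restriction to $\A^n_\k$ already realises the polynomial degree).

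So the key reduction is: $\deg(gf)<de$ if and only if $x_0$ divides all of $x_0^{de}$ (which it does, as $de\ge 1$) and $G_j(x_0^d,F_1,\dots,F_n)$ for every $j$, i.e. if and only if $x_0 \mid G_j(0,a_1,\dots,a_n)$ for all $j$ after reducing modulo $x_0$. But $G_j(0,a_1,\dots,a_n)=b_j(a_1,\dots,a_n)$. Hence $\deg(gf)<de$ exactly when the forms $b_1(a_1,\dots,a_n),\dots,b_n(a_1,\dots,a_n)$ all vanish on the locus $\{x_0=0\}$ — more precisely, when at every point $[0:x_1:\dots:x_n]\in H_\infty$ outside $I_f$ (where $(a_1,\dots,a_n)\ne 0$) the point $[0:a_1:\dots:a_n]$ lies in $I_g$, i.e. $b_1=\dots=b_n=0$ there. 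Since $I_f$ is a proper closed subset of $H_\infty$ and $H_\infty$ is irreducible, this happens if and only if $[0:a_1:\dots:a_n]\in I_g$ for all $[0:x_1:\dots:x_n]\in H_\infty\setminus I_f$, which is precisely the statement $X_f\subseteq I_g$. This gives the equivalence of $(1)$ and $(2)$ (contrapositive: $\deg(gf)=\deg(g)\deg(f)\iff X_f\not\subseteq I_g$).

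For the final assertion, suppose both conditions hold. Then $gf$ has degree $de$ and its highest homogeneous part is obtained by reducing $G_j(x_0^d,F_1,\dots,F_n)$ modulo $x_0$, which gives $b_j(a_1,\dots,a_n)$. Thus for a point $[0:x_1:\dots:x_n]\in H_\infty\setminus I_{gf}$ the image under $gf$ is $[0:b_1(a_1,\dots,a_n):\dots:b_n(a_1,\dots,a_n)]=[0:b_1(y):\dots:b_n(y)]$ where $y=(a_1(x),\dots,a_n(x))$; provided $y\ne 0$ and $[0:y]\notin I_g$, this point lies in $X_g$ by definition of $X_g$. One checks that the points of $H_\infty\setminus I_{gf}$ for which $y=0$ or $[0:y]\in I_g$ are irrelevant (they are not in the domain used to define $X_{gf}$, or contribute nothing), so $X_{gf}\subseteq X_g$.

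The main obstacle is the bookkeeping around common factors: one must argue carefully that the only possible common divisor of the homogenised coordinates of $gf$ is a power of $x_0$ — this rests on $g\circ f$ being a dominant morphism of $\A^n_\k$ (indeed an automorphism), so that after dehomogenising by setting $x_0=1$ the polynomials $f_i$ and then $g_j(f_1,\dots,f_n)$ already have the expected degree as polynomials in $x_1,\dots,x_n$, and no cancellation in the affine chart can lower it. Once that point is pinned down, the rest is the elimination-theory computation sketched above, and the equivalence together with the inclusion $X_{gf}\subseteq X_g$ follows by comparing highest homogeneous parts.
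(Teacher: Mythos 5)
Your proof is correct and takes essentially the same route as the paper: both arguments reduce the equality $\deg(gf)=\deg(g)\deg(f)$ to the non-vanishing of one of the polynomials $b_j(a_1,\dots,a_n)$ (the highest homogeneous part of $g$ evaluated on that of $f$), and then translate this into $X_f\not\subseteq I_g$ using that a polynomial vanishing on a dense open subset of $H_\infty$ vanishes identically; the inclusion $X_{gf}\subseteq X_g$ is obtained identically by reading off the highest homogeneous part of $gf$. Your extra bookkeeping about the common factor is fine (and in fact immediate, since the zeroth coordinate of the composite is $x_0^{de}$, so the gcd is automatically a power of $x_0$), and the points you defer with ``one checks'' cause no trouble because any $[0:x]\in H_\infty\setminus I_{gf}$ automatically satisfies $a(x)\neq 0$ and $[0:a(x)]\notin I_g$.
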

\begin{proof}
Denoting by $(a_1,\dots,a_n)$ and $(b_1,\dots,b_n)$ the highest homogeneous parts of $f$ and $g$ respectively, the equality $\deg(g f)=\deg(g)\cdot \deg(f)$ is equivalent to the fact that one of the polynomials 
$$b_1(a_1,\dots,a_n),\dots,b_n(a_1,\dots,a_n)$$
is not equal to zero.

By definition, the sets $I_g,X_f\subset H_\infty$ are given respectively by 
$$\begin{array}{rcl}
I_g&=&\{[0:y_1:\dots:y_n]\in H_\infty\mid b_1(y_1,\dots,y_n)=\dots=b_n(y_1,\dots,y_n)\},\\
X_f&=&\{[0:a_1(y_1,\dots,y_n):\dots:a_n(y_1,\dots,y_n)]\mid [0:y_1:\dots:y_n]\in H_\infty\setminus I_f\}.\end{array}$$

If $X_f\not\subset I_g$, there exists a point $[0:y_1:\dots:y_n]\in H_\infty\setminus I_f$ such that $[0:a_1(y_1,\dots,y_n):\dots:a_n(y_1,\dots,y_n)]\notin I_g$, which corresponds to the existence of an index $i\in \{1,\dots,n\}$ such that $b_i(a_1(y_1,\dots,y_n),\dots,a_n(y_1,\dots,y_n))\not=0$. In particular, the polynomial $b_i(a_1,\dots,a_n)$ is not zero, so $\deg(gf)=\deg(g)\deg(f)$.

Conversely, if $b_i(a_1,\dots,a_n)$ is a non zero polynomial, the open subset $U_i\subset H_\infty$ corresponding to the non-vanishing of this polynomial is non-empty. Intersecting this open set with $H_\infty\setminus I_f$ yields a non-empty open subset of points $H_\infty$ which have image in $X_f$ and not in $I_g$.

Now that the equivalence between $(1)$ and $(2)$ is shown, we show that these imply that $X_{gf}\subset X_{g}$. Since $\deg(gf)=\deg(g)\deg(f)$, the homogeneous part of higher degree of $gf$ is $(b_1(a_1,\dots,a_n),\dots,b_n(a_1,\dots,a_n)).$ This implies that points of $X_{gf}$ are image by the extension of $g$ of points of $X_f\setminus I_g$, and then lie in $X_g$.
\end{proof}
\begin{corollary}\label{Coro1}
Let $f,g\in \Aut(\A^n_\k)$ be of degree $\ge 2$. Then, the following hold: 
\begin{enumerate}[$(1)$]
\item
$X_f\subset I_{f^{-1}}.$
\item
If $I_g\cap I_{f^{-1}}=\emptyset$, then $\deg(gf)=\deg(g)\cdot \deg(f).$
\item
$X_f\not\subset I_f\Leftrightarrow \deg(f^2)=\deg(f)^2.$
\end{enumerate}
\end{corollary}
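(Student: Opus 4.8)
The plan is to deduce all three assertions from Lemma~\ref{Lem:ConditionDegree}, so essentially no new computation is needed. Two preliminary observations: first, if $f\in\Aut(\A^n_\k)$ has degree $\ge 2$, then so does $f^{-1}$, for otherwise $f^{-1}$ would be affine and hence its inverse $f$ would be affine too, contradicting $\deg(f)\ge 2$; second, $X_f\neq\emptyset$, since $I_f$ is a proper closed subset of $H_\infty$, so $H_\infty\setminus I_f$ is a nonempty open set whose image under the extension of $f$ is, by definition, $X_f$.

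Assertion $(3)$ is then just Lemma~\ref{Lem:ConditionDegree} applied with $g=f$: the equality $\deg(f\cdot f)=\deg(f)\cdot\deg(f)$ is by that lemma equivalent to $X_f\not\subset I_f$.

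For assertion $(1)$, I would apply Lemma~\ref{Lem:ConditionDegree} with $g=f^{-1}$, which is legitimate since $f^{-1}$ has degree $\ge 2$ by the first observation. If $X_f$ were not contained in $I_{f^{-1}}$, the lemma would give $\deg(f^{-1}f)=\deg(f^{-1})\cdot\deg(f)\ge 4$; but $f^{-1}f=\mathrm{id}$ has degree $1$, a contradiction. Hence $X_f\subset I_{f^{-1}}$. For assertion $(2)$, combine $(1)$ with the hypothesis: $X_f\subset I_{f^{-1}}$ together with $I_g\cap I_{f^{-1}}=\emptyset$ forces $X_f\cap I_g=\emptyset$, and since $X_f\neq\emptyset$ this gives $X_f\not\subset I_g$; Lemma~\ref{Lem:ConditionDegree} then yields $\deg(gf)=\deg(g)\cdot\deg(f)$.

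The whole argument is short, and the only point requiring a moment's attention is the degree bound on $f^{-1}$ used in $(1)$, which guarantees that Lemma~\ref{Lem:ConditionDegree} is applicable; once that is in place, each assertion reduces to a one-line application of the lemma (with, in the case of $(1)$, the absurd conclusion $\deg(\mathrm{id})\ge 4$). I do not anticipate any serious obstacle.
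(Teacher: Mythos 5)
Your proof is correct and follows essentially the same route as the paper: all three parts are deduced from Lemma~\ref{Lem:ConditionDegree}, with $(1)$ obtained by contradiction from $\deg(f^{-1}f)=1<\deg(f^{-1})\deg(f)$, $(2)$ from $(1)$ plus the hypothesis, and $(3)$ as the case $g=f$. Your extra checks that $\deg(f^{-1})\ge 2$ and that $X_f\neq\emptyset$ are details the paper leaves implicit, and they are both correct.
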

\begin{proof}
Part $(1)$ follows from Lemma~\ref{Lem:ConditionDegree} and the fact that $\deg(f^{-1}f)<\deg(f^{-1})\deg(f)$. If $I_g\cap I_{f^{-1}}=\emptyset$, then $X_f\not\subset I_g$ by $(1)$, so the equality $\deg(gf)=\deg(g)\cdot \deg(f)$ follows again from Lemma~\ref{Lem:ConditionDegree}. Part $(3)$ corresponds to Lemma~\ref{Lem:ConditionDegree}, in the case $f=g$.
\end{proof}
\begin{corollary}\label{Cor:WeaklyregSequence}
If $f\in \Aut(\A^n_\k)$ is an element such that $X_f\cap I_f=\emptyset$, then $$\deg(f^m)=\deg(f)^m\mbox{ and }X_{f^m}\subset X_f$$ for each $m\ge 1$.
\end{corollary}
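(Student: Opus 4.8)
The plan is to prove this by induction on $m$, using Lemma~\ref{Lem:ConditionDegree} and Corollary~\ref{Coro1} repeatedly. The statement to establish is that $X_f \cap I_f = \emptyset$ implies both $\deg(f^m) = \deg(f)^m$ and $X_{f^m} \subset X_f$ for all $m \geq 1$; the two assertions are naturally proved together since the inductive step for the degree formula will need control on $X_{f^{m-1}}$, and the inductive step for the inclusion of $X$'s will need the degree formula.

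The base case $m=1$ is trivial, and $m=2$ is precisely Corollary~\ref{Coro1}(3) (for the degree) together with the ``moreover'' clause of Lemma~\ref{Lem:ConditionDegree} applied with $g=f$ (for $X_{f^2}\subset X_f$); one should note that if $\deg(f)=1$ there is nothing of interest to prove, or the hypothesis $X_f\cap I_f=\emptyset$ should be read in the regime $\deg(f)\ge 2$ where $I_f, X_f$ are defined. For the inductive step, suppose $m\ge 2$ and that $\deg(f^{m}) = \deg(f)^{m}$ and $X_{f^{m}} \subset X_f$. I want to apply Lemma~\ref{Lem:ConditionDegree} to the composition $f \cdot f^{m}$ (i.e.\ $g = f$, and the ``$f$'' of the lemma being $f^{m}$). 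Condition~(2) of that lemma requires $X_{f^{m}} \not\subset I_f$. But $X_{f^{m}} \subset X_f$ by the inductive hypothesis, and $X_f \cap I_f = \emptyset$ by assumption while $X_f\neq\emptyset$ (it is the image of the nonempty open set $H_\infty \setminus I_f$), so $X_{f^m}$ is nonempty and not contained in $I_f$. Hence Lemma~\ref{Lem:ConditionDegree} gives $\deg(f^{m+1}) = \deg(f \cdot f^{m}) = \deg(f)\cdot \deg(f^{m}) = \deg(f)^{m+1}$, and the ``moreover'' clause gives $X_{f^{m+1}} \subset X_f$, completing the induction.

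The one point that needs a little care — and which I'd single out as the only real obstacle, though a minor one — is the nonemptiness of $X_f$ and, more generally, making sure that whenever I invoke Lemma~\ref{Lem:ConditionDegree} the relevant iterate has degree $\ge 2$ so that its indeterminacy and image sets are actually defined. Since $\deg(f)\ge 2$, the degree formula forces $\deg(f^m)=\deg(f)^m\ge 2$ for every $m$, so this is automatic once the induction is running. For nonemptiness: $I_f$ is a \emph{proper} closed subset of $H_\infty$, so $H_\infty\setminus I_f\neq\emptyset$ and therefore $X_f$, being its image under the extension of $f$, is nonempty; this is exactly what lets us pass from $X_{f^m}\subset X_f$ and $X_f\cap I_f=\emptyset$ to ``$X_{f^m}$ is nonempty and not contained in $I_f$'', which is the precise hypothesis (2) of Lemma~\ref{Lem:ConditionDegree}. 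No computation beyond what is already packaged in Lemma~\ref{Lem:ConditionDegree} and Corollary~\ref{Coro1} is required.
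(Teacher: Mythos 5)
Your proof is correct and follows exactly the paper's argument: induction on $m$, with the inductive step obtained by applying Lemma~\ref{Lem:ConditionDegree} to the composition $f\circ f^{m}$, using $X_{f^{m}}\subset X_f$ and $X_f\cap I_f=\emptyset$ to verify hypothesis~(2) and the ``moreover'' clause to propagate the inclusion of the $X$'s. The only nitpick is that the nonemptiness you need is that of $X_{f^{m}}$ itself, which does not follow from $X_{f^{m}}\subset X_f$ and $X_f\neq\emptyset$ as you phrase it, but rather from the same observation you make for $X_f$ applied to $f^{m}$ (namely that $I_{f^{m}}$ is a proper closed subset of $H_\infty$) --- a harmless slip, and one the paper's own proof does not even pause over.
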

\begin{proof}
We prove the result by induction on $m$, the case $m=1$ being obvious. For $m\ge 2$, we use the facts that $X_f\cap I_f=\emptyset$ and $X_{f^{m-1}}\subset X_f$, which imply that $X_{f^{m-1}}\not\subset I_f$. Applying Lemma~\ref{Lem:ConditionDegree} to the composition $f\circ f^{m-1}$, we obtain that $\deg(f^m)=\deg(f^{m-1})\deg(f)$, which is equal to $\deg(f)^m$ by induction hypothesis, and also that $X_{f^m}\subset X_f$.\end{proof}
Restricting to dimension $2$, we obtain the following result. 
\begin{corollary}\label{Coro:Dim2}
Let $f\in \Aut(\A^2_\k)$ of degree $\ge 1$. The following are equivalent:
\begin{enumerate}[$(1)$]
\item
$I_f\not= I_{f^{-1}}$;
\item
$I_f\cap  I_{f^{-1}}=\emptyset$;
\item
$\deg(f^2)=\deg(f)^2$;
\item
$\deg(f^m)=\deg(f)^m$ for each $m\ge 1$.
\end{enumerate}
\end{corollary}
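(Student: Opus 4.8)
The plan is to establish the cycle of implications $(4)\Rightarrow(3)\Rightarrow(1)$, which is trivial, and then close the loop by proving $(1)\Rightarrow(2)\Rightarrow(4)$ together with handling the degree-$1$ case separately. If $\deg(f)=1$, then $f$ extends to an element of $\Aut(\p^2_\k)$, the indeterminacy loci $I_f$ and $I_{f^{-1}}$ are empty, so $(1)$ and $(2)$ are vacuously equivalent, and $\deg(f^m)=1=\deg(f)^m$ holds for every $m$; thus all four statements hold simultaneously. So from now on assume $\deg(f)\ge 2$.

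For the main case, the key input is Remark~\ref{Remark:Dimension2Xf}, which in dimension $2$ identifies $X_f=I_{f^{-1}}$ and (applied to $f^{-1}$) $X_{f^{-1}}=I_f$. First I would prove $(1)\Rightarrow(2)$: since $I_f$ and $I_{f^{-1}}$ are proper closed subsets of the line $H_\infty\cong\p^1_\k$, each is either empty, a single point, or finite; but the extension of $f$ to $\p^2_\k$ contracts $H_\infty$ to the single point $X_f=I_{f^{-1}}$, and by the standard picture of a quadratic-or-higher birational map of $\p^2$, the indeterminacy locus $I_f$ is also a single point (the unique point of $H_\infty$ not ``seen'' by $f^{-1}$). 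Hence both $I_f$ and $I_{f^{-1}}$ are singletons, and two singletons are either equal or disjoint; so $I_f\ne I_{f^{-1}}$ forces $I_f\cap I_{f^{-1}}=\emptyset$.

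Next, $(2)\Rightarrow(4)$: assuming $I_f\cap I_{f^{-1}}=\emptyset$, I want to deduce $X_f\cap I_f=\emptyset$ so as to invoke Corollary~\ref{Cor:WeaklyregSequence}. By Remark~\ref{Remark:Dimension2Xf}, $X_f=I_{f^{-1}}$, and the hypothesis $(2)$ says precisely $I_{f^{-1}}\cap I_f=\emptyset$, i.e. $X_f\cap I_f=\emptyset$. Then Corollary~\ref{Cor:WeaklyregSequence} gives $\deg(f^m)=\deg(f)^m$ for all $m\ge 1$, which is $(4)$. The remaining implications $(4)\Rightarrow(3)$ (specialise $m=2$) and $(3)\Rightarrow(1)$ (contrapositive: if $I_f=I_{f^{-1}}$, then $X_f=I_{f^{-1}}=I_f$, so $X_f\subset I_f$, hence $\deg(f^2)<\deg(f)^2$ by Corollary~\ref{Coro1}(3)) complete the cycle.

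The step I expect to need the most care is $(1)\Rightarrow(2)$, specifically the claim that in dimension $2$ the indeterminacy locus $I_f$ of a degree $\ge 2$ polynomial automorphism is a single point of $H_\infty$. This should follow from the explicit description of $I_f$ via the highest homogeneous part $(a_1,a_2)$ together with the fact that $f$ is an automorphism of $\A^2$ — indeed $\gcd(a_1,a_2)$ must be trivial in a suitable sense, so the common zero locus of $a_1,a_2$ on the line $x_0=0$ is a single point — but one has to be slightly careful to rule out the empty set (which cannot occur for $\deg(f)\ge 2$) and to confirm the scheme-theoretic count. Alternatively, and perhaps more cleanly, one can avoid this point-counting entirely: $(3)\Rightarrow(2)$ can be obtained by contraposition from Corollary~\ref{Coro1}(1) and (3), since $I_f\cap I_{f^{-1}}\ne\emptyset$ combined with $X_f\subset I_{f^{-1}}$ and the dimension-$2$ identity $X_f=I_{f^{-1}}$ would force $I_f\cap X_f\ne\emptyset$ hence $\deg(f^2)<\deg(f)^2$; then the chain $(4)\Rightarrow(3)\Rightarrow(2)\Rightarrow(4)$ and the trivial $(2)\Rightarrow(1)\Rightarrow$ (via the same contrapositive) closes everything without the point-count.
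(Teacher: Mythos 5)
Your proof is correct in the main case and follows essentially the same route as the paper: both rest on Remark~\ref{Remark:Dimension2Xf} (in dimension $2$, $X_f=I_{f^{-1}}$ and $X_{f^{-1}}=I_f$ are single points of $H_\infty$, so the two loci are either equal or disjoint), on Corollary~\ref{Coro1}(3) for the equivalence with $\deg(f^2)=\deg(f)^2$, and on Corollary~\ref{Cor:WeaklyregSequence} to pass to all iterates. The only slip is your degree-$1$ paragraph: if one declares $I_f=I_{f^{-1}}=\emptyset$, then $(2)$, $(3)$, $(4)$ hold while $(1)$ fails, so the four conditions are not ``all true'' there; this is really an artifact of the ``degree $\ge 1$'' in the statement (the sets $I_f$ are only defined for degree $\ge 2$, and the paper's proof, like the substantive part of yours, tacitly assumes $\deg f\ge 2$).
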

\begin{proof}
As we are in dimension $2$, we have $X_f=I_{f^{-1}}$ and $X_{f^{-1}}=I_f$, which are two points of $H_\infty$, which can be distinct or not (see Remark~\ref{Remark:Dimension2Xf}).

The equality $\deg(f^2)=\deg(f)^2$ is equivalent to $X_f\not\subset I_f$ (Corollary~\ref{Coro1}), which is here equivalent to $I_{f^{-1}}\not= I_f$ or $I_{f^{-1}}\cap I_f=\emptyset$. Hence, $(1),(2),(3)$ are equivalent, and of course implied by $(4)$. It remains to see that $(2)$ corresponds to $X_f\cap I_f=\emptyset$, which implies that $\deg(f^m)=\deg(f)^m$ for each $m\ge 1$ by Corollary~\ref{Cor:WeaklyregSequence}.
\end{proof}
\begin{remark}
The most interesting implication of this corollary is $(3)\Rightarrow (4)$, i.e.~that $\deg(f^2)=\deg(f)^2$ implies that $\deg(f^m)=\deg(f)^m$ for $m\ge 1$, a fact arleady observed by Jean-Philippe Furter in \cite{FurterIt} (at least when $\car(\k)=0$).
\end{remark}
Looking at the proof, this result has no reason to be true in dimension $n\ge 3$, and is in fact false on the easiest non-trivial example, that we describe now.
\begin{example}
Let $f=(x_1+(x_2)^2,x_2+(x_3)^2,x_3)\in \SAut(\A^3_\k)$, which has highest homogeneous part $a=((x_2)^2,(x_3)^2,0)\in \End(\A^3_\k)$. Then, 
$$I_f=[0:1:0:0],\  X_f=\{[0:x_1:x_2:0]\mid [x_1:x_2]\in \p^1\}.$$
Since $X_f\not\subset I_f$, we have $\deg(f^2)=\deg(f)^2=4$ and $X_{f^2}\subset X_f$. More precisely, the homogeneous part of $f^2$ is $a^2=((x_3)^4,0,0)$, so 
$$I_{f^2}=I_f=X_{f^2}=[0:1:0:0].$$ In particular, $X_{f^2}\subset I_f$, so $\deg(f^3)<\deg(f^2)\cdot \deg(f)$.

In fact, one easily checks with the formulas that $\deg(f^n)\le 4$ for each $n$, and that $\deg(f^n)=4$ for each $n\ge 2$  if $\car(\k)=0$, indeed
$$\begin{array}{c}f^n=\left(x_1+n(x_2)^2+n(n-1)x_2(x_3)^2+(\sum_{i=1}^{n-1} i^2)(x_3)^4,x_2+n(x_3)^2,x_3\right)\end{array}$$
for each $n\ge 0$.
\end{example}

\subsection{Families of automorphisms and valuations}\label{SubSec:Families}
In the sequel, we will study families of elements of $\Aut(\A^n_\k)$, which correspond to elements of $\Aut(\A^n_{\k((t))})$.

It is then natural to use the valuation $$\nu\colon \k((t))[x_1,\dots,x_n]\to \Z\cup \{-\infty\}$$ associated to $t$. We define precisely this valuation here, as we will use it often afterwards.
\begin{definition}
Every element  $f\in\k((t))[x_1,\dots,x_n]\setminus \{0\}$ can be written as
$$f=\sum_{k=m}^{\infty} a_k t^k$$
where $m\in \mathbb{Z}$, $a_i\in \k[x_1,\dots,x_n]$ for $i\ge m$, and $a_m\not=0$. We then define
$\nu(f)=m$. 
Choosing $\nu(0)=-\infty$, we obtain a valuation 
$$\nu\colon \k((t))[x_1,\dots,x_n]\to \Z\cup \{-\infty\}.$$
\begin{enumerate}
\item
If $\nu(f)=0$, we define $f(0)=a_0$.
\item
If $\nu(f)>0$, we define $f(0)=0$.
\item
If $\nu(f)<0$, we say that \emph{$f$ has a pole at $t=0$} and that $f(0)$ is not defined.
\end{enumerate}
\end{definition}
\begin{definition}\label{Defi:DefinedOriginAut}
Let $f=(f_1,\dots,f_n)\in\Aut(\A^n_{\k((t))})$. We define $$\nu(f)=\min\{\nu(f_i)\mid i=1,\dots,n\}.$$
\begin{enumerate}
\item
If $\nu(f)\ge 0$, we say that $f$ is \emph{defined at the origin} (or \emph{has a value}), and define $$f(0)=(f_1(0),\dots,f_n(0)),$$ to be its value, which is an element of $\End(\A^n_\k)$.
\item
 If $\nu(f)<0$, we say that \emph{$f$ has a pole at $t=0$}, and say that $f(0)$ is not defined. 
\end{enumerate}
\end{definition}
\begin{remark}
In the above definition, it is possible that the element $f(0)$ is defined, but does not belong to $\Aut(\A^n_\k)$. This is for example the case when one of the components becomes $0$, or for  $f=(tx_1+x_2,x_2,\dots,x_n)$.
This phenomenon is however impossible for $\SAut(\A^n_\k)$, as the following result shows.
\end{remark}

\begin{lemma}\label{Lem:FormalInverse}
Let $\alpha\in \SAut(\A^n_{\k((t))})$ be an element which has no pole at $t=0$. 

Then, $\alpha^{-1}$ has no pole at $t=0$, and replacing $t$ with $0$ yields two automorphisms $$\beta=\alpha(0),\gamma=\alpha^{-1}(0)\in \SAut(\A^n_{\k}).$$
such that $\beta\gamma=\mathrm{id}$.
\end{lemma}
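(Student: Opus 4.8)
The plan is to exploit that the Jacobian is multiplicative and equal to $1$ for both $\alpha$ and $\alpha^{-1}$, so that the value $\beta=\alpha(0)$, once we know it is an endomorphism, must have Jacobian $1$ and in particular be dominant; then the relation $\alpha^{-1}\circ\alpha=\mathrm{id}$ taken at $t=0$ forces $\gamma\circ\beta=\mathrm{id}$ on the level of endomorphisms, and a dominant endomorphism with a left inverse is an automorphism. The one genuine thing to check is that $\alpha^{-1}$ has no pole at $t=0$, i.e. that $\nu(\alpha^{-1})\ge 0$; once this is in hand, everything else is formal.

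First I would set $\beta=\alpha(0)\in\End(\A^n_\k)$, which exists by hypothesis. Writing $\alpha=(\alpha_1,\dots,\alpha_n)$ with $\nu(\alpha_i)\ge 0$, each $\partial\alpha_i/\partial x_j$ also has $\nu\ge 0$, and reduction mod $t$ commutes with the partial derivatives, so $\Jac(\beta)=\Jac(\alpha)(0)=1(0)=1$ in $\k[x_1,\dots,x_n]$. By Remark~\ref{Rem:ConjJac} (applied over $K=\k$, or directly: a polynomial endomorphism with invertible Jacobian is dominant), $\beta\colon\A^n_\k\to\A^n_\k$ is dominant, equivalently $\beta^*\colon\k[x_1,\dots,x_n]\to\k[x_1,\dots,x_n]$ is injective.

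Next I would prove the key claim: $\alpha^{-1}$ has no pole at $t=0$. Write $\delta=\alpha^{-1}=(\delta_1,\dots,\delta_n)\in\SAut(\A^n_{\k((t))})$ and suppose $\nu(\delta)=-m<0$. Substituting $t\mapsto 0$ is not directly available for $\delta$, so instead I would multiply through by $t^m$ and reduce mod $t$ to get a nonzero $n$-tuple $\bar\delta=(t^m\delta_i)(0)\in\End(\A^n_\k)^{\oplus n}$ of polynomials, not all zero, and at least one homogeneous-free of $t$. Now look at the identity $\delta_i(\alpha_1,\dots,\alpha_n)=x_i$ in $\k((t))[x_1,\dots,x_n]$; multiplying by $t^m$ gives $(t^m\delta_i)(\alpha_1,\dots,\alpha_n)=t^m x_i$, and since all $\alpha_j$ have $\nu\ge 0$ we may reduce mod $t$ to obtain $\overline{(t^m\delta_i)}(\beta_1,\dots,\beta_n)=0$ for every $i$. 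But $\overline{(t^m\delta_i)}$ is a polynomial in $\k[x_1,\dots,x_n]$, not all zero over $i$, and $\beta$ is dominant, so $\overline{(t^m\delta_i)}\circ\beta=0$ forces $\overline{(t^m\delta_i)}=0$ for all $i$, contradicting that $\bar\delta\ne 0$. Hence $\nu(\delta)\ge 0$ and $\gamma:=\delta(0)=\alpha^{-1}(0)\in\End(\A^n_\k)$ is well defined; the same argument as for $\beta$ shows $\Jac(\gamma)=1$.

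Finally, reducing the relation $\alpha^{-1}\circ\alpha=\mathrm{id}$ mod $t$ (legitimate now that both factors have $\nu\ge 0$, since composition of $\nu\ge 0$ tuples is again $\nu\ge 0$ and reduction mod $t$ is a ring homomorphism compatible with substitution) gives $\gamma\circ\beta=\mathrm{id}$ in $\End(\A^n_\k)$, and symmetrically, reducing $\alpha\circ\alpha^{-1}=\mathrm{id}$ gives $\beta\circ\gamma=\mathrm{id}$. Thus $\beta$ is an automorphism with inverse $\gamma$, and since $\Jac(\beta)=1$ we conclude $\beta,\gamma\in\SAut(\A^n_\k)$ with $\beta\gamma=\mathrm{id}$, as claimed. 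The main obstacle is the no-pole claim for $\alpha^{-1}$: everything rests on turning the would-be pole order $m$ into a nonzero reduction $\overline{t^m\delta_i}$ and deriving a contradiction from dominance of $\beta$; the rest is bookkeeping about compatibility of $t\mapsto 0$ with composition.
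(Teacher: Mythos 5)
Your proof is correct, and it takes a genuinely different route from the paper's. The paper first normalizes $\alpha$ (composing with a translation and with the inverse of the linear part, which lies in $\SL(n,\k[[t]])$ precisely because of the $\SAut$ hypothesis), then writes $\alpha^{-1}=(x_j+\sum_i g_{i,j})$ in homogeneous components and derives a contradiction by isolating the lowest-degree component $g_{k,j}$ having a pole inside the identity $\alpha^{-1}\circ\alpha=\mathrm{id}$. You instead use the $\SAut$ hypothesis through the specialization: $\Jac(\alpha(0))=1\neq 0$ forces $\beta=\alpha(0)$ to be dominant (a fact valid in any characteristic, and the same one the paper invokes in the proof of Lemma~\ref{Lemm:StructureAffine}), and then the clearing-denominators argument --- reducing $(t^m\delta_i)(\alpha_1,\dots,\alpha_n)=t^mx_i$ mod $t$ and using injectivity of $\beta^*$ --- kills any would-be pole of $\alpha^{-1}$. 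Your approach avoids the normalization steps entirely and isolates exactly what is needed (dominance of the specialization), so it would apply verbatim to any $\alpha\in\Aut(\A^n_{\k((t))})$ with $\Jac(\alpha)\in\k[[t]]^*$; the paper's induction on homogeneous degree is more hands-on but gives the same conclusion. The concluding step (specializing the composition identities to get $\gamma\circ\beta=\beta\circ\gamma=\mathrm{id}$) is essentially the same in both arguments.
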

\begin{proof}
Since $\alpha$ has no pole at $t=0$, it sends $(0,\dots,0)\in \A^n_{\k((t))}$ onto an element having coordinates in $\k[[t]]$. We can thus replace $\alpha$ by its composition with a translation and assume that $\alpha$ (and thus $\alpha^{-1}$) fixes the origin. Its linear part is then equal to an element of $\SL(n,\k[[t]])$, whose inverse also belongs to $\SL(n,\k[[t]])$. Replacing with the composition by this inverse, we can assume that $\alpha$ has a trivial linear part. We denote by $\m$ the ideal of $\k((t))[x_1,\dots,x_n]$ generated by the $x_i$, and can then write
$$\alpha=(f_1,\dots,f_n)$$
for some $f_1,\dots,f_n\in \k[[t]][x_1,\dots,x_n]$, $f_i\equiv x_i\pmod {\m^2}$. We write then
$$
\alpha^{-1}=(x_1+\sum_{i=2}^d g_{i,1},\dots,x_n+\sum_{i=2}^{d_2} g_{i,n})$$
where the $ g_{i,j}\in \k((t))[x_1,\dots,x_n]$ are homogeneous polynomials of degree $i$ and $d_2$ is the degree of $g$. Assume for contradiction that one of the $g_{k,j}$ does not belong to $\k[[t]][x_1,\dots,x_n]$, and choose $k$ to be minimal for this. Then, the $j$-th coordinate of $\alpha^{-1}\circ \alpha$ is equal to
$$x_j=f_j+\sum_{i=2}^{d_2} g_{i,j}(f_1,\dots,f_n),$$
which implies that $\sum_{i=k}^d g_{i,j}(f_1,\dots,f_n)\in \k[[t]][x_1,\dots,x_n].$ But the  part of degree $k$ of this sum is in fact equal to $g_{i,j}(x_1,\dots,x_n)$, which does not belong to $\k[[t]][x_1,\dots,x_n]$.

We have proved that $\alpha^{-1}$ does not have any pole at the origin. We then denote by $d_1,d_2$ the degrees of $\alpha$ and $\alpha^{-1}$ (which are the maximal degree of their components), and denote by $\mathrm{End}_{d_i}$ the set of endomorphisms of $\A^n$ of degree $\le d_i$, which is naturally isomorphic to an affine space. Observe that  $\alpha,\alpha^{-1}$ corresponds to a $\k((t))$-point of the algebraic variety 
$$\{(f,g)\in \mathrm{End}_{d_1}\times \mathrm{End}_{d_1}\mid f\circ g=\mathrm{id}\}.$$
Since neither $\alpha$ neither $\alpha^{-1}$ has a pole at $t=0$, all coefficients are defined at the origin. Replacing $t$ with $0$ gives then the result.
\end{proof}
\begin{remark}
The result of Lemma~\ref{Lem:FormalInverse} can also be obtained by the fact that each $\SAut(\A^n_\k)_{\le d}$ is closed in $\End(\A^n_\k)_{\le d}$ (Lemma~\ref{Lemm:StructureAffine}).
\end{remark}
We will apply the following classical valuative result, and recall the argument of the proof, given in \cite[$\S 2.1$]{Furter} (the version that we need here is slightly more general, but the proof is analogue).
\begin{lemma}\label{Lemm:Valuative}
Let $Y,Z$ be two quasi-projective $\k$-algebraic varieties, let $\varphi\colon Y\to Z$ be a morphism and let $z\in Z$ be a $($closed$)$ point of $Z$. The following assertions are equivalent:
\begin{enumerate}[$(1)$]
\item
The point $z$ belongs to the closure $\overline{\varphi(Y)}$ of the image.
\item
There is an irreducible $\k$-curve $\Gamma$, a smooth closed point $p\in \Gamma$ and a rational map $\iota\colon \Gamma\dasharrow Y$ such that $\varphi\circ \iota\colon \Gamma\dasharrow Z$ is defined at $p$ and sends it onto $z$.
\item
There is a $\k((t))$-point $y\in Y(\k((t)))$ such that $\varphi(y)\in Z(\k((t)))$ has no pole at $t=0$ and $\varphi(y)(0)=z$.
\end{enumerate}
\end{lemma}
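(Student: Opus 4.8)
The equivalences are the classical valuative criterion adapted to quasi-projective varieties, and the strategy is to prove the cycle $(3)\Rightarrow(1)$, $(1)\Rightarrow(2)$, $(2)\Rightarrow(3)$, reducing everything to the complete case and to discrete valuation rings. The implication $(3)\Rightarrow(1)$ is the soft direction: given $y\in Y(\k((t)))$ with $\varphi(y)$ having a value at $t=0$ equal to $z$, one spreads $y$ out over the local ring at the origin of a smooth affine curve (since $\k((t))=\mathrm{Frac}(\k[[t]])$ and $\k[[t]]$ is the completion of such a local ring), obtaining an actual morphism from a punctured neighbourhood of $p$ into $Y$; composing with $\varphi$ and using that the value at $t=0$ is $z$ shows $z$ lies in the closure of a subset of $\varphi(Y)$, hence in $\overline{\varphi(Y)}$.

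**The main step.** The heart of the argument is $(1)\Rightarrow(2)$, which is a geometric curve-selection statement. First I would reduce to the case $Y$ irreducible (replace $Y$ by an irreducible component whose image has $z$ in its closure; there is one since $\overline{\varphi(Y)}$ is a finite union of the $\overline{\varphi(Y_i)}$). Then, replacing $Z$ by $\overline{\varphi(Y)}$ and compactifying, I may assume $\varphi\colon Y\to Z$ is dominant with $Z$ irreducible. Now take a general point of $Y$ and join it to a point of the fibre structure near $z$: concretely, one uses that $z\in\overline{\varphi(Y)}$ to cut down by generic hyperplane sections (Bertini) on a projective closure $\bar Y$ of $Y$, intersecting with the preimage of a generic linear space through $z$, until one lands on an irreducible curve $\bar\Gamma\subset\bar Y$ whose image closure contains $z$; setting $\Gamma$ to be the normalization of $\bar\Gamma$ and $p$ a point over a point of $\bar\Gamma$ mapping to $z$, the composite $\Gamma\dashrightarrow Y\to Z$ is a morphism of the smooth curve $\Gamma$ into the complete $\bar Z$, hence everywhere defined, and at $p$ it takes the value $z$; finally the rational map $\Gamma\dashrightarrow Y$ is the one induced by $\bar\Gamma\hookrightarrow\bar Y$ and $\bar Y\dashrightarrow Y$. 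The delicate point, and the one I expect to cost the most care, is ensuring the curve $\Gamma$ actually meets the locus mapping into $Y$ (not just into the boundary $\bar Y\setminus Y$) along an open set, and that the point $p$ can be chosen over a point of $\bar\Gamma$ that maps \emph{precisely} to $z$ and not merely into $\overline{\varphi(Y)}\setminus\varphi(Y)$; this is handled by choosing the linear spaces through $z$ generically and invoking that the fibre $\varphi^{-1}(z')$ for $z'$ near $z$ in $\overline{\varphi(Y)}$ meets $Y$, together with a dimension count on the incidence variety.

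**The closing step.** The implication $(2)\Rightarrow(3)$ is essentially a translation: given $\Gamma$, $p$, $\iota$ as in $(2)$, pass to the completion $\widehat{\mathcal O}_{\Gamma,p}\cong\k[[t]]$ (using that $p$ is a smooth point and $\k$ algebraically closed), which provides a $\k((t))$-point $y\in Y(\k((t)))$ coming from $\iota$ restricted to the generic point; since $\varphi\circ\iota$ is defined at $p$ with value $z$, the point $\varphi(y)\in Z(\k((t)))$ extends over $\k[[t]]$, i.e. has no pole at $t=0$, and its value at $t=0$ is $z$. Throughout, the only genuine subtlety beyond bookkeeping is the Bertini-type construction in $(1)\Rightarrow(2)$; the rest is the standard dictionary between $\k((t))$-points, formal arcs, and curves through a point, exactly as in \cite[$\S2.1$]{Furter}.
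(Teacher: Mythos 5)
Your overall architecture (a cycle of implications, passing through curves and through the completion $\widehat{\mathcal O_{p,\Gamma}}\simeq\k[[t]]$) matches the paper, and your $(2)\Rightarrow(3)$ is essentially identical to the paper's. For $(1)\Rightarrow(2)$, however, you take a noticeably heavier route than necessary. The paper's argument is: embed $Y,Z$ as dense opens of projective varieties $\overline Y,\overline Z$ and (after replacing $\overline Y$ by the closure of the graph) extend $\varphi$ to $\overline\varphi\colon\overline Y\to\overline Z$; since $\overline Y$ is complete, $\overline\varphi(\overline Y)$ is \emph{closed}, so it contains $\overline{\varphi(Y)}\ni z$ and hence $z=\overline\varphi(p)$ for some $p\in\overline Y$. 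One then takes any irreducible curve $\Gamma\subset\overline Y$ through $p$ meeting the dense open $Y$, and $\iota$ is the inclusion of $\Gamma\cap Y$. The ``delicate point'' you flag --- arranging that the curve hits a point mapping \emph{exactly} to $z$ rather than merely near it --- simply does not arise in this formulation, because the lift $p$ of $z$ is produced first and the curve is chosen through it afterwards; your Bertini-plus-incidence-variety construction can be made to work but is solving a problem the compactification already solves. (Note also that statement $(2)$ does not require $p$ to lie in the domain of $\iota$, so there is no need for the curve to meet $\varphi^{-1}(z)$ inside $Y$.)

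There is one genuine flaw, in your $(3)\Rightarrow(1)$: you cannot in general ``spread $y$ out over the local ring at the origin of a smooth affine curve.'' A $\k((t))$-point of $Y$ has coordinates generating a subfield of $\k((t))$ that may have transcendence degree $>1$ over $\k$ (e.g.\ $t$ together with a power series transcendental over $\k(t)$), so $y$ need not be defined over the function field of any curve, and no ``actual morphism from a punctured neighbourhood of $p$ into $Y$'' exists. The implication is still the soft one, but the correct argument is by specialization: $\varphi(y)$ is a $\k((t))$-point of the closed subset $W=\overline{\varphi(Y)}$ (since $\varphi^{-1}(W)=Y$), the no-pole hypothesis says it extends to a $\k[[t]]$-point of $Z$, and the image of the closed point of $\mathrm{Spec}\,\k[[t]]$, namely $z$, is a specialization of a point of $W$, hence lies in $W$. (The paper itself omits this implication, referring to \cite{Furter}, so you should in any case not rely on the curve picture here.)
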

\begin{proof}
$(1)\Rightarrow (2)$: We can see $Y$ and $Z$ as dense open subset of projective algebraic varieties~$\overline{Y},\overline{Z}$, and assume that $\varphi$ extends to a morphism $\overline{\varphi}\colon \overline{Y}\to \overline{Z}$, whose image is then closed. As $z\in Z\subset \overline{Z}$ belongs to the closure of $\varphi(Y)$, it is the image of a point $p\in\overline{Y}$. Taking a closed irreducible curve $\Gamma\subset \overline{Y}$ intersecting $Y$ and passing through $p$ (which exists since $Y$ is dense in $\overline{Y}$), we obtain a morphism $\Gamma\to \overline{Z}$. It remains to choose $\iota\colon \Gamma\dasharrow Y$ to be the rational map given by the inclusion $\Gamma\cap Y\to Y$.

$(2)\Rightarrow (3)$ The rational maps $\Gamma\dasharrow Y\to Z$ correspond to field homomorphisms $\k(Z)\to \k(Y)\to \k(\Gamma)$, sending the local ring $\mathcal{O}_{z,Z}$ to $\mathcal{O}_{p,\Gamma}$.

Denote by $\widehat{\mathcal{O}_{p,\Gamma}}$ the completion of $\mathcal{O}_{p,\Gamma}$, with respect to its maximal ideal.
Because $\mathcal{O}_{p,\Gamma}$ is a Noetherian regular local ring of dimension $1$ with residue field $\k$, its completion is a complete Noetherian regular local ring with the same properties and by Cohen Theorem, it must be isomorphic to a ring of formal power series. The dimension being~$1$, one has a $\k$-isomorphism $\widehat{\mathcal{O}_{p,\Gamma}}\simeq \k[[t]]$, which induces a field homomorphism $\k(\Gamma)\to \k((t))$. 

The composition $\k(Y)\to \k(\Gamma)\to \k((t))$ corresponds to the $\k((t))$-point $y$ that we want, and its image corresponds to the composition $\k(Z)\to \k(Y)\to \k(\Gamma)\to \k((t))$.
\end{proof}
\begin{corollary}\label{Coro:LimitConjugateValuation}
Let $f\in \SAut(\A^n_\k)$, let $d\ge 1$ be an integer and let $Y$ be the $\k$-algebraic variety $\SAut(\A^n_\k)_{\le d}$. The following assertions are equivalent:
\begin{enumerate}[$(1)$]
\item
The set $\{gfg^{-1}\mid g\in Y\}$ is closed in $\SAut(\A^n_\k)$.
\item
If $\Gamma$ is an irreducible $\k$-curve $\Gamma$ and $\iota\colon \Gamma\dasharrow Y$ is a rational map such that $\varphi\circ \iota\colon \Gamma\dasharrow Z$ is defined at a smooth point $p\in \Gamma$, the image of $p$ belongs to $\{gfg^{-1}\mid g\in Y\}$.
\item
If $\varphi\in Y(\k((t)))$ is an element that has poles at $t=0$ and $h=\varphi f \varphi^{-1}$ has no poles at $t=0$, then $h(0)$ belongs to $\{gfg^{-1}\mid g\in Y\}$.
\end{enumerate}
\end{corollary}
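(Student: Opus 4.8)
The plan is to realise $\{gfg^{-1}\mid g\in Y\}$ as the image of a morphism of quasi-projective varieties and then read off the three assertions from the valuative criterion of Lemma~\ref{Lemm:Valuative}.

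First I would pin down the target of conjugation. For $g\in Y=\SAut(\A^n_\k)_{\le d}$ one has $\deg(g^{-1})\le d^{n-1}$ by \cite[Theorem~1.5]{BCW}, hence $\deg(gfg^{-1})\le d\cdot\deg(f)\cdot d^{n-1}=:N$, so conjugation by elements of $Y$ takes values in $Z:=\SAut(\A^n_\k)_{\le N}$, which is an affine (hence quasi-projective) $\k$-variety by Lemma~\ref{Lemm:StructureAffine}. The assignment $g\mapsto(g,g^{-1})$ is a morphism from $Y$ into $\{(a,b)\in\End(\A^n_\k)_{\le d}\times\End(\A^n_\k)_{\le d^{n-1}}\mid a\circ b=\mathrm{id}\}$ — it is the inverse of the first projection, which is an isomorphism since each $g$ has a unique inverse — and composition of degree-bounded endomorphisms is a morphism, so
$$c\colon Y\longrightarrow Z,\qquad g\longmapsto gfg^{-1}$$
is a morphism of quasi-projective varieties with $c(Y)=\{gfg^{-1}\mid g\in Y\}$; this $c$ is the map called $\varphi$ in assertion~$(2)$.

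Next I would reformulate $(1)$. Since $Z$ is closed in $\SAut(\A^n_\k)$ and carries the subspace topology (Lemma~\ref{Lem:Shafa}$(3)$–$(4)$), and $c(Y)\subseteq Z$, closedness of $c(Y)$ in $\SAut(\A^n_\k)$ is equivalent to closedness of $c(Y)$ in the variety $Z$, i.e. to $\overline{c(Y)}\subseteq c(Y)$ with the closure taken in $Z$. Then I would apply Lemma~\ref{Lemm:Valuative} to $c\colon Y\to Z$ and a closed point $z\in Z$: it says that $z\in\overline{c(Y)}$ if and only if there are an irreducible $\k$-curve $\Gamma$, a smooth point $p\in\Gamma$ and a rational map $\iota\colon\Gamma\dasharrow Y$ with $c\circ\iota$ defined at $p$ and $c(\iota(p))=z$, and also if and only if there is $\varphi\in Y(\k((t)))$ with $\varphi f\varphi^{-1}$ having no pole at $t=0$ and $(\varphi f\varphi^{-1})(0)=z$. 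These are exactly the points $z$ considered in assertions $(2)$ and $(3)$ respectively; hence "every such $z$ lies in $c(Y)$" is, in each case, literally the statement $\overline{c(Y)}\subseteq c(Y)$, which we have just seen is equivalent to $(1)$. This yields $(1)\Leftrightarrow(2)\Leftrightarrow(3)$.

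Finally I would observe that the hypothesis "$\varphi$ has poles at $t=0$" in $(3)$ costs nothing: if $\varphi\in Y(\k((t)))$ has no pole at $t=0$ then by Lemma~\ref{Lem:FormalInverse} neither does $\varphi^{-1}$, and $(\varphi f\varphi^{-1})(0)=\varphi(0)f\varphi(0)^{-1}$ with $\varphi(0)\in\SAut(\A^n_\k)_{\le d}=Y$ (this set being closed in $\End(\A^n_\k)_{\le d}$), so the conclusion already holds trivially; restricting to the case of a genuine pole therefore does not weaken $(3)$. The only step requiring real care is the bookkeeping of the first two paragraphs — fixing the degree bound $N$, checking that conjugation is a morphism of the relevant affine varieties, and using Lemma~\ref{Lem:Shafa}$(3)$–$(4)$ to pass between closedness in the ind-group $\SAut(\A^n_\k)$ and closedness in the single variety $Z$; granting that, the corollary is an immediate transcription of Lemma~\ref{Lemm:Valuative}.
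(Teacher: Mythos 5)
Your proof is correct and follows essentially the same route as the paper: bound $\deg(g^{-1})$ by $d^{n-1}$ via \cite[Theorem 1.5]{BCW} so that conjugation becomes a morphism of algebraic varieties $Y\to\SAut(\A^n_\k)_{\le N}$, then apply Lemma~\ref{Lemm:Valuative} together with Lemma~\ref{Lem:Shafa}. Your closing remark that the ``$\varphi$ has poles'' hypothesis in $(3)$ costs nothing (via Lemma~\ref{Lem:FormalInverse}) is a correct extra detail the paper leaves implicit.
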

\begin{proof}
The degree of the inverse of an element $g\in Y$ is at most $d^{n-1}$ (see \cite[Theorem 1.5, page 292]{BCW}). Hence, the map $Y\to \SAut(\A^n_\k)$ that sends $g$ onto $gfg^{-1}$ corresponds to a morphism of algebraic varieties $\varphi\colon Y\to \SAut(\A^n_\k)_{\le m}$ for some $m$.
The result follows then from Lemma~\ref{Lemm:Valuative} applied to $\varphi$, and Lemma~\ref{Lem:Shafa}.
\end{proof}
\section{Conjugacy classes of dynamically regular automorphisms of $\A^n_\k$}\label{Sec:ConjRegular}
\subsection{Image at infinity of elements of $\Aut(\A^n_{\k((t))})$}
In $\S\ref{SubSec:Dynamical}$, we explained how the set $X_f\subset H_\infty$ is defined, for each element $f\in \Aut(\A^n_{\k})$, by extending the map to $\p^n_\k$ and looking at the image of the hyperplane $H_\infty$ at infinity.

We now associate similarly a subset $X_\alpha\subset H_\infty$ to an element $\alpha\in\Aut(\A^n_{\k((t))})$ which has a pole at $t=0$, by taking the limit of the image of $\alpha(t)$, when $t$ goes towards $0$. The formal definition of $X_\alpha$ is the following:
\begin{definition}
Let $\alpha\in \Aut(\A^n_{\k((t))})$ be an element of valuation $\nu(\alpha)=-m<0$, that we write 
 $$\alpha=\left(\frac{1}{t^m}\alpha_1,\dots,\frac{1}{t^m}\alpha_n\right)$$
 where $\alpha_1,\dots,\alpha_n\in \k[[t]][x_1,\dots,x_n]$ are such that $$\tilde{\alpha}=(\alpha_1(0),\dots,\alpha_n(0))=(\tilde{\alpha}_1,\dots,\tilde{\alpha}_n)\in \End(\A^n_\k)\setminus \{0\}.$$ We then define $$X_\alpha=\{[0:\tilde{\alpha}_1(y_1,\dots,y_n):\dots:\tilde{\alpha}_n(y_1,\dots,y_n)]\mid (y_1,\dots,y_n)\in \A^n_\k\setminus \tilde{\alpha}^{-1}(\{0\}) \}\subset H_\infty.$$
\end{definition}
This definition can be geometrically understood:
\begin{remark}In the above definition, $\alpha$ is not defined at $t=0$, but extending $\alpha$ to $\p^n$ we obtain the element of $\Bir(\p^n_{\k((t))})$ given by
 $$[x_0:\dots:x_n]\dasharrow[t^mx_0^d:F_1(x_0,\dots,x_n,t):\dots:F_n(x_0,\dots,x_n,t)],$$
 where $d$ is the degree of $\alpha$ and each $F_i(x_0,\dots,x_n,t)\in \k[[t]][x_0,\dots,x_n]$ is the homogeneisation of $\alpha_i$. This corresponds to a family of rational maps of $\p^n_\k$ parametrised by~$t$, which as a value at $t=0$, corresponding to 
 $$[x_0:\dots:x_n]\dasharrow[0:F_1(x_0,\dots,x_n,0):\dots:F_n(x_0,\dots,x_n,0)].$$
 The set $X_\alpha\subset H_\infty$ is then the image of this map by points of $\A^n_\k$ which are well-defined under this map.
 
 Writing $[1:x_1:\dots:x_n]$ such a point, the image of it by the extension of $\alpha$ is 
 $$[t^m:F_1(1,\dots,x_n,t):\dots:F_n(1,\dots,x_n,t)]=[t^m:\alpha_1(x_1,\dots,x_n,t):\dots:\alpha_n(x_1,\dots,x_n,t)]$$
 and corresponds to a curve in $\p^n$, whose point when $t=0$ belongs to $X_\alpha$.
 
 The set $X_\alpha$ corresponds then to the limit, viewed in $\p^n_\k$, of the image of points of $\A^n_\k$ under $\alpha(t)$ when $t$ goes towards $0$.
\end{remark}

\begin{proposition}\label{Prop:Ifpoles}
Let $f\in \SAut(\A^n_\k)$ be an element of degree $d>1$, and let $\alpha\in \SAut(\A^n_{\k((t))})$ be an element that has a pole at $t=0$. If $$X_\alpha\not\subset I_f,$$ then $\alpha^{-1} f \alpha$ has a pole at $t=0$.
\end{proposition}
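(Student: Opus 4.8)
The plan is to track degrees through the composition $\alpha^{-1}f\alpha$ using the behaviour-at-infinity formalism developed in \S\ref{SubSec:Dynamical}, extended to the family $\alpha$ via the set $X_\alpha$. Write $\nu(\alpha)=-m<0$ and $\alpha=(t^{-m}\alpha_1,\dots,t^{-m}\alpha_n)$ with $\tilde\alpha=(\alpha_1(0),\dots,\alpha_n(0))\in\End(\A^n_\k)\setminus\{0\}$; let $F_i$ be the homogenisation of $\alpha_i$ in $\k[[t]][x_0,\dots,x_n]$, so that the extension of $\alpha$ to $\p^n$ has value at $t=0$ the rational map $[x_0:\dots:x_n]\dashrightarrow[0:F_1(x_0,\dots,x_n,0):\dots:F_n(x_0,\dots,x_n,0)]$, whose image is $X_\alpha$. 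The hypothesis $X_\alpha\not\subset I_f$ says exactly that a general point of $X_\alpha$ is not contracted by the extension of $f$ to $\p^n_\k$, i.e. that if $(a_1,\dots,a_n)$ is the highest homogeneous part of $f$, then some $a_i(\tilde\alpha_1,\dots,\tilde\alpha_n)$ is a nonzero polynomial. This is the analogue, at the level of $\k[[t]]$-families, of condition $(2)$ in Lemma~\ref{Lem:ConditionDegree}.

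First I would compute the leading $t$-behaviour of $f\circ\alpha$. Since $f$ has degree $d>1$, plugging $\alpha=(t^{-m}\alpha_1,\dots,t^{-m}\alpha_n)$ into $f$, the $i$-th coordinate $f_i(\alpha)$ has a pole whose order is governed by $a_i(\alpha_1,\dots,\alpha_n)\cdot t^{-md}$ plus lower-order-in-$t^{-1}$ terms; because $a_i(\tilde\alpha_1,\dots,\tilde\alpha_n)\ne 0$ for some $i$ — this is where $X_\alpha\not\subset I_f$ enters — the coefficient of $t^{-md}$ in $f_i(\alpha)$ does not vanish, so $\nu(f\alpha)=-md$ exactly and the value at $t=0$ of $t^{md}(f\alpha)$ is precisely the endomorphism $(a_1(\tilde\alpha),\dots,a_n(\tilde\alpha))$, up to the lower-degree parts of $f$ which contribute only to lower powers of $t^{-1}$. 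The key point to record is that the homogeneous part of maximal degree (in the $x$'s) of this leading-in-$t$ endomorphism is $(a_1(\tilde a_1,\dots,\tilde a_n),\dots,a_n(\tilde a_1,\dots,\tilde a_n))$, where $(\tilde a_i)$ is the highest homogeneous part of $\tilde\alpha$; so the ``set at infinity'' of the leading term of $f\alpha$ is $X_f$ intersected with the image of $X_\alpha$-type data, in particular it is a nonempty subset of $X_f$, hence it lies in $I_{f^{-1}}$ by Corollary~\ref{Coro1}(1).

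Next I would compose on the left by $\alpha^{-1}$. The delicate part is that $\alpha^{-1}$ also has a pole at $t=0$, so I cannot just invoke Lemma~\ref{Lem:ConditionDegree} directly; instead I run the same leading-$t$ computation for $\alpha^{-1}\circ(f\alpha)$. Writing $\alpha^{-1}=(t^{-m'}\beta_1,\dots,t^{-m'}\beta_n)$ with $\beta=(\beta_1(0),\dots)\ne 0$, and using that the leading-in-$t$ part of $f\alpha$ computed above has its behaviour-at-infinity set inside $X_f\subset I_{f^{-1}}$: feeding a point of $I_{f^{-1}}=X_{\alpha^{-1}}$-target into $\alpha^{-1}$, one sees a cancellation in the top $t$-power, exactly as the equality $X_f\subset I_{f^{-1}}$ (from $\deg(f^{-1}f)<\deg f^{-1}\deg f$) forced a drop of degree in Lemma~\ref{Lem:ConditionDegree}. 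I expect this to show that the pole order of $\alpha^{-1}f\alpha$ is strictly smaller than $m'+md$, but that is not yet enough — I actually need the pole order to be strictly negative, i.e. that it does not drop all the way to $\ge 0$. Here one uses that $f\alpha$ genuinely has a pole (order $md>0$) whose leading term is \emph{not} in the indeterminacy of $\alpha^{-1}$ in the relevant refined sense, or more robustly: if $\alpha^{-1}f\alpha$ had no pole at $t=0$, then by Lemma~\ref{Lem:FormalInverse} its value would be an element of $\SAut(\A^n_\k)$, and conjugating back one would get $f\alpha = \alpha\cdot(\alpha^{-1}f\alpha)$ with the right-hand side having pole order exactly $m$ (coming only from $\alpha$), contradicting $\nu(f\alpha)=-md$ with $d>1$. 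That last contradiction argument is clean and sidesteps the finest part of the two-step leading-term bookkeeping.

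The main obstacle I anticipate is the bookkeeping in the two successive compositions: making precise that when $f$ has degree $d>1$ the pole order of $f\alpha$ is exactly $md$ (not smaller), and then controlling how much it can drop upon composing with $\alpha^{-1}$, all while $f$ has lower-degree terms and $\alpha,\alpha^{-1}$ have their own subleading $t$-expansions. The cleanest route, as indicated, is: (i) show $\nu(f\alpha)=-md$ with $d>1$, using $X_\alpha\not\subset I_f$; (ii) argue by contradiction using Lemma~\ref{Lem:FormalInverse} that $\alpha^{-1}f\alpha$ cannot have $\nu\ge 0$, since otherwise $f\alpha=\alpha\cdot(\alpha^{-1}f\alpha)$ would have a pole of order only $m=-\nu(\alpha)$, which is incompatible with $\nu(f\alpha)=-md<-m$. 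I would present step (i) with just enough of the explicit leading-coefficient identification to justify the exact equality, and keep step (ii) as the short formal argument above.
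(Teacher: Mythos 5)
Your proposal is correct and follows essentially the same route as the paper: first use $X_\alpha\not\subset I_f$ to see that some $a_i(\tilde\alpha_1,\dots,\tilde\alpha_n)$ is a nonzero polynomial, hence $\nu(f\alpha)=-md$ exactly, and then conclude by the valuation inequality $\nu(\alpha\circ\beta)\ge\nu(\alpha)=-m>-md$ for $\beta$ without pole, applied to $f\alpha=\alpha\circ(\alpha^{-1}f\alpha)$. The exploratory material about tracking $X_f\subset I_{f^{-1}}$ through the left composition with $\alpha^{-1}$ is unnecessary, as you yourself note, and the clean contradiction in your step (ii) is precisely the paper's argument.
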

\begin{proof}
We write $$f=(f_1,\dots,f_n)$$
 for some $f_1,\dots,f_n\in \k[x_1,\dots,x_n]$ and denote by  $(a_1,\dots,a_n)$ the highest homogeneous part of $f$ (which is of degree $d$). In particular, the indetermination locus $I_f\subset \p^n_\k$ of the extension of $f$ to $\p^n_\k$ is given by 
 $$x_0=0, a_1=\dots=a_n=0.$$
 Because $\alpha$ has a pole at $t=0$, we have $\nu(\alpha)=-m<0$ and can write
 $\alpha=\left(\frac{1}{t^m}\alpha_1,\dots,\frac{1}{t^m}\alpha_n\right)$
 where $\alpha_1,\dots,\alpha_n\in \k[[t]][x_1,\dots,x_n]$ are such that $\tilde{\alpha}=(\alpha_1(0),\dots,\alpha_n(0))=(\tilde{\alpha}_1,\dots,\tilde{\alpha}_n)\in \End(\A^n_\k)\setminus \{0\},$ and $X_\alpha$ is then equal to $$X_\alpha=\{[0:\tilde{\alpha}_1(y_1,\dots,y_n):\dots:\tilde{\alpha}_n(y_1,\dots,y_n)]\mid (y_1,\dots,y_n)\in \A^n_\k\setminus \tilde{\alpha}^{-1}(\{0\}) \}\subset H_\infty.$$
 
 The fact that $X_\alpha\not\subset I_f$ yields a point $(y_1,\dots,y_n)\in \A^n_\k\setminus\tilde{\alpha}^{-1}(\{0\})$ and an integer $i\in \{1,\dots,n\}$ such that 
 $$a_i(\tilde{\alpha}_1(y_1,\dots,y_n),\dots,\tilde{\alpha}_n(y_1,\dots,y_n))\not=0.$$ 
 In particular, we have 
 $$a_i(\tilde{\alpha}_1,\dots,\tilde{\alpha}_n)=a_i(\alpha_1(0),\dots,\alpha_n(0))\in \k[x_1,\dots,x_n]\setminus \{0\}.$$
  
 This implies that $a_i(\frac{1}{t^m}\alpha_1,\dots,\frac{1}{t^m}\alpha_n)=\frac{1}{t^{md}}a_i(\alpha_1,\dots,\alpha_n)$ has valuation $-md$. Hence, $f \alpha$ has also valuation $-md$. 
 
 It remains to show that this implies that $\beta=\alpha^{-1}f\alpha$ has a pole at $t=0$. Indeed, if $\beta$ had no pole, we would have $\nu(\alpha\beta)\ge \nu(\alpha)=-m$, which is impossible since $f\alpha=\alpha\beta$ and $\nu(f\alpha)=-md$.
\end{proof}

\begin{corollary}\label{Coro:LimitConjugatesInside}
Let $f\in \SAut(\A^n_\k)$ be a dynamically regular element.

\begin{enumerate}[$(1)$]\item
If $\alpha\in \SAut(\A^n_{\k((t))})$ is such that $g=\alpha^{-1} f \alpha$ has no pole at $t=0$,  then $\alpha$ and $\alpha^{-1}$ have no pole at $t=0$. In particular, $g(0)$ is an element of $\SAut(\A^2_{\k})$ that is conjugated to $f$.
\item
For each $d\ge 1$, the set $\{gfg^{-1}\mid g\in \SAut(\A^n_{\k})_{\le d}\}$ is closed in $\SAut(\A^n_{\k})$.
\end{enumerate}
\end{corollary}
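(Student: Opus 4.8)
The plan is to derive both assertions from Proposition~\ref{Prop:Ifpoles}, Lemma~\ref{Lem:FormalInverse} and Corollary~\ref{Coro:LimitConjugateValuation}; the one genuinely new input is that dynamical regularity makes $I_f$ and $I_{f^{-1}}$ disjoint, so that $X_\alpha$ cannot be swallowed by both indeterminacy loci at once.

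For $(1)$ I would argue by contradiction. Assume $\alpha$ has a pole at $t=0$, so that $X_\alpha\subset H_\infty$ is defined; it is non-empty, since in the decomposition $\alpha=(t^{-m}\alpha_1,\dots,t^{-m}\alpha_n)$ one has $\tilde\alpha=(\alpha_1(0),\dots,\alpha_n(0))\ne 0$, hence $\A^n_\k\setminus\tilde\alpha^{-1}(\{0\})$ is a non-empty open set. As $I_f$ is defined, $f$ has degree $\ge 2$, and then so does $f^{-1}$, so Proposition~\ref{Prop:Ifpoles} applies to both $f$ and $f^{-1}$. Since $f$ is dynamically regular, $I_f\cap I_{f^{-1}}=\emptyset$, so the non-empty set $X_\alpha$ cannot be contained in both $I_f$ and $I_{f^{-1}}$. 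If $X_\alpha\not\subset I_f$, Proposition~\ref{Prop:Ifpoles} gives that $\alpha^{-1}f\alpha=g$ has a pole at $t=0$, contradicting the hypothesis. If $X_\alpha\not\subset I_{f^{-1}}$, the same proposition applied to $f^{-1}$ gives that $\alpha^{-1}f^{-1}\alpha=g^{-1}$ has a pole at $t=0$; but $g$ has no pole, so by Lemma~\ref{Lem:FormalInverse} neither does $g^{-1}$, again a contradiction. Hence $\alpha$ has no pole at $t=0$, and Lemma~\ref{Lem:FormalInverse} then yields that $\alpha^{-1}$ also has no pole and that $\alpha(0),\alpha^{-1}(0)\in\SAut(\A^n_\k)$ with $\alpha^{-1}(0)=\alpha(0)^{-1}$. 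Since composition is a morphism of the affine varieties $\End(\A^n_\k)_{\le e}$ (Lemma~\ref{Lemm:StructureAffine}), evaluation at $t=0$ is compatible with composition of pole-free maps, so $g(0)=\alpha(0)^{-1}f\,\alpha(0)$, which is conjugate to $f$ in $\SAut(\A^n_\k)$.

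For $(2)$ I would apply Corollary~\ref{Coro:LimitConjugateValuation} with $Y=\SAut(\A^n_\k)_{\le d}$: it suffices to verify condition $(3)$ there, namely that whenever $\varphi\in Y(\k((t)))$ has a pole at $t=0$ and $h=\varphi f\varphi^{-1}$ has no pole at $t=0$, then $h(0)\in\{gfg^{-1}\mid g\in Y\}$. But $\varphi^{-1}\in\SAut(\A^n_{\k((t))})$ satisfies $(\varphi^{-1})^{-1}f(\varphi^{-1})=h$, which has no pole at $t=0$, so part $(1)$ (applied with $\alpha=\varphi^{-1}$) forces $\varphi^{-1}$, hence also $\varphi$, to have no pole at $t=0$, contradicting the hypothesis on $\varphi$. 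Thus condition $(3)$ is vacuous and $\{gfg^{-1}\mid g\in\SAut(\A^n_\k)_{\le d}\}$ is closed.

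The only delicate point is the dichotomy in $(1)$: Proposition~\ref{Prop:Ifpoles} produces a pole of $\alpha^{-1}f\alpha$ only under $X_\alpha\not\subset I_f$, which can genuinely fail, so one must play $f$ against $f^{-1}$ — and the disjointness of $I_f$ and $I_{f^{-1}}$, together with $X_\alpha\ne\emptyset$, is exactly what makes the argument close. Everything else (pole-freeness of inverses, the interaction of evaluation at $t=0$ with composition, and the reduction of $(2)$ to the valuative criterion of Corollary~\ref{Coro:LimitConjugateValuation}) is routine given the preliminary lemmas.
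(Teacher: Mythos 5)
Your argument is correct and coincides with the paper's own proof: both parts reduce to the observation that a pole of $\alpha$ produces a non-empty $X_\alpha$ which, by disjointness of $I_f$ and $I_{f^{-1}}$, escapes one of the two indeterminacy loci, so that Proposition~\ref{Prop:Ifpoles} (applied to $f$ or to $f^{-1}$, together with Lemma~\ref{Lem:FormalInverse}) forces $\alpha^{-1}f\alpha$ to have a pole; part $(2)$ then follows from the valuative criterion of Corollary~\ref{Coro:LimitConjugateValuation} exactly as you say. Your write-up merely recasts the paper's direct argument as a contradiction and makes explicit two points the paper leaves implicit (the non-emptiness of $X_\alpha$ and the compatibility of evaluation at $t=0$ with composition), which is fine.
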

\begin{proof}
$(1)$ We suppose that $\alpha$ has a pole at $t=0$, which is is equivalent to the fact that $\alpha^{-1}$ has a pole at $t=0$ (Lemma~\ref{Lem:FormalInverse}), and show that $\alpha^{-1} f\alpha$ has a pole at $t=0$. If $X_\alpha\not\subset I_f,$ this is given by Proposition~\ref{Prop:Ifpoles}. Otherwise, we have $X_\alpha\not\subset I_{f^{-1}},$ (because $I_f\cap I_{f^{-1}}=\emptyset$ by hypothesis) and apply the proposition to $f^{-1}$. This implies that  $\alpha f^{-1} \alpha^{-1}$ has a pole at $t=0$. Hence, $\alpha f \alpha^{-1}$ has also a pole at $t=0$ by Lemma~\ref{Lem:FormalInverse}.

$(2)$ Follows from $(1)$ and Corollary~\ref{Coro:LimitConjugateValuation}.
\end{proof}

We obtain thus the following two results:
\begin{proposition}\label{Prop:IfConstructibleThenClosed}
Let $f\in \SAut(\A^n_\k)$  be a dynamically regular element having the following property: there exists a function $\tau\colon \N\to \N$ such that
for each conjugate $g\in \SAut(\A^n_\k)$ of $f$, there exists $h\in \SAut(\A^n_\k)$ of degree $\le \tau(\deg(g))$ such that $g=hfh^{-1}$.

Then, the conjugacy class of $f$ in $\SAut(\A^n_\k)$ is closed.
\end{proposition}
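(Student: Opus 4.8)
The plan is to reduce the closedness of the conjugacy class $C(f)$ to the already-established degree-bounded statements. By Lemma~\ref{Lem:Shafa}(4), a set is closed in $\SAut(\A^n_\k)$ exactly when its intersection with each $\SAut(\A^n_\k)_{\le d}$ is closed. So fix an integer $d\ge 1$ and set $X_d=C(f)\cap \SAut(\A^n_\k)_{\le d}$; I must show $X_d$ is closed in $\SAut(\A^n_\k)_{\le d}$, equivalently (by Lemma~\ref{Lemm:StructureAffine}) in $\End(\A^n_\k)_{\le d}$. The hypothesis provides a function $\tau\colon\N\to\N$ such that every $g\in X_d$, having degree at most $d$, can be written $g=hfh^{-1}$ with $\deg(h)\le \tau(d)$. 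Hence
$$X_d \subseteq \{\,hfh^{-1}\mid h\in \SAut(\A^n_\k)_{\le \tau(d)}\,\} \cap \SAut(\A^n_\k)_{\le d},$$
and conversely every element of the right-hand side is a conjugate of $f$ of degree $\le d$, so lies in $X_d$. Thus $X_d=\{\,hfh^{-1}\mid h\in \SAut(\A^n_\k)_{\le \tau(d)}\,\}\cap \SAut(\A^n_\k)_{\le d}$ exactly.

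Now apply Corollary~\ref{Coro:LimitConjugatesInside}(2) with $d$ replaced by $\tau(d)$: the set $\{\,hfh^{-1}\mid h\in \SAut(\A^n_\k)_{\le \tau(d)}\,\}$ is closed in $\SAut(\A^n_\k)$. Intersecting a closed set of $\SAut(\A^n_\k)$ with the closed subset $\SAut(\A^n_\k)_{\le d}$ (closed by Lemma~\ref{Lem:Shafa}(3)) yields a closed set; so $X_d$ is closed in $\SAut(\A^n_\k)$, hence in $\SAut(\A^n_\k)_{\le d}$. Since $d$ was arbitrary, Lemma~\ref{Lem:Shafa}(4) gives that $C(f)$ is closed in $\SAut(\A^n_\k)$.

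The only point that needs a moment of care — and the place I'd expect a referee to look — is the set-theoretic identity $X_d=\{hfh^{-1}\mid h\in \SAut(\A^n_\k)_{\le\tau(d)}\}\cap \SAut(\A^n_\k)_{\le d}$. The inclusion ``$\subseteq$'' is precisely the content of the hypothesis on $\tau$ (note that $\tau$ depends only on $\deg(g)\le d$, so a single bound $\tau(d)$ works uniformly over all of $X_d$); the inclusion ``$\supseteq$'' is immediate because any $hfh^{-1}$ is conjugate to $f$, and intersecting with $\SAut(\A^n_\k)_{\le d}$ forces membership in $C(f)\cap \SAut(\A^n_\k)_{\le d}=X_d$. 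Everything else is a formal consequence of the closedness results in Section~2 and Corollary~\ref{Coro:LimitConjugatesInside}. No genuine obstacle arises; the proposition is essentially a bookkeeping step packaging Corollary~\ref{Coro:LimitConjugatesInside}(2) into a statement about the whole conjugacy class.
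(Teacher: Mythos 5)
Your argument is correct and coincides with the paper's own proof: both decompose the conjugacy class degree by degree via Lemma~\ref{Lem:Shafa}(4), identify $C(f)\cap\SAut(\A^n_\k)_{\le d}$ with $\{hfh^{-1}\mid \deg(h)\le\tau(d)\}\cap\SAut(\A^n_\k)_{\le d}$ using the hypothesis on $\tau$, and conclude by the closedness given in Corollary~\ref{Coro:LimitConjugatesInside}(2). The only cosmetic point (present implicitly in the paper as well) is that one should replace $\tau$ by $d\mapsto\max_{e\le d}\tau(e)$ so that the single bound $\tau(d)$ indeed covers all $g$ of degree $\le d$.
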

\begin{proof}
Let us denote by $C\subset \SAut(\A^n_\k)$ the conjugacy class of $f$ in $\SAut(\A^n_\k)$. Note that $C$ is closed (in $\SAut(\A^n_\k)$) if and only if $$C_d=\{g\in \SAut(\A^n_\k)\mid \deg(g)\le d\}$$ is closed for each $d\in\N$.

By Corollary~\ref{Coro:LimitConjugatesInside}, the set 
$$C_d'= \{hfh^{-1}\mid h\in\SAut(\A^n_\k)_{\le d}\}$$
is closed for each $d$.

By hypothesis, we have $C_d\subset C'_{\tau(d)}$ for each $d\in \mathbb{N}$, which implies that 
$$C_d=C'_{\tau(d)}\cap \Aut(\SAut(\A^n_\k)_{\le d})$$
is closed for each $d$.
\end{proof}
The additional hypothesis of Proposition~\ref{Prop:IfConstructibleThenClosed} is fullfilled for all dynamically regular elements of $\SAut(\A^2_\k)$, so we obtain that the conjugacy classes of all dynamically regular elements of $\SAut(\A^2_\k)$ is closed:
\begin{proposition}\label{Prop:ConjclassclosedHenon}
Let $f\in \SAut(\A^2_\k)$  be  a dynamically regular element.
Then, the following hold:
\begin{enumerate}[$(1)$]
\item
If $g\in \SAut(\A^2_\k)$ is conjugate to $f$, there exists $h\in \SAut(\A^2_\k)$, such that $$g=hfh^{-1}\mbox{ and }\deg(h)^2\le \deg(g)$$ 

\item
The conjugacy class of $f$ in $\SAut(\A^2_\k)$ is closed.
\end{enumerate}
\end{proposition}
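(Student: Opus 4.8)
The plan is to deduce $(2)$ from $(1)$, and to prove $(1)$ using the Jung-Van der Kulk structure $\Aut(\A^2_\k)=\Aff\ast_U\J$, where $U=\Aff\cap\J$. For the implication $(1)\Rightarrow(2)$: by $(1)$, every conjugate $g\in\SAut(\A^2_\k)$ of $f$ is of the form $hfh^{-1}$ for some $h\in\SAut(\A^2_\k)$ of degree at most $\lfloor\sqrt{\deg(g)}\,\rfloor$, so $f$ satisfies the hypothesis of Proposition~\ref{Prop:IfConstructibleThenClosed} with $\tau(d)=\lfloor\sqrt d\,\rfloor$; being dynamically regular, its conjugacy class is then closed. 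So everything lies in $(1)$.

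For $(1)$, write $g=hfh^{-1}$ with $h\in\SAut(\A^2_\k)$ and assume $e:=\deg(h)\ge2$ (otherwise $\deg(h)^2=1\le\deg(g)$); among all such $h$ I fix one of least degree. Since $f$ is dynamically regular it is not algebraic, hence $\deg(f)\ge2$ and, by the Jung-Van der Kulk theorem, $f$ is conjugate into neither $\Aff$ nor $\J$. In dimension $2$ one has $\deg(h^{-1})=\deg(h)$, $X_{h^{-1}}=I_h$, and $X_f=I_{f^{-1}}$, $X_{f^{-1}}=I_f$ (Remark~\ref{Remark:Dimension2Xf}); moreover the hypothesis of Corollary~\ref{Cor:WeaklyregSequence} holds for $f$ and for $f^{-1}$ (it amounts to $X_f\cap I_f=\emptyset$, which is dynamical regularity in dimension~$2$), so $X_{f^n}=X_f$ and $I_{f^n}=X_{f^{-n}}=I_f$ for every $n\ge1$. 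If now $I_h\notin\{I_f,I_{f^{-1}}\}$, I apply Lemma~\ref{Lem:ConditionDegree} twice: first to $f\circ h^{-1}$, where there is no degree drop since $X_{h^{-1}}=I_h\ne I_f$, giving $\deg(f h^{-1})=\deg(f)\deg(h)$ and $X_{f h^{-1}}=X_f$; then to $h\circ(f h^{-1})$, where again there is no drop since $X_{f h^{-1}}=X_f\ne I_h$, giving $\deg(g)=\deg(h)\deg(f)\deg(h^{-1})=e^2\deg(f)\ge e^2$. So $h$ works in this case.

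It remains to treat $I_h\in\{I_f,I_{f^{-1}}\}$, which I would handle via reduced expressions. Recall that the degree of an element of $\Aut(\A^2_\k)$ equals the product of the degrees of the de Jonqui\`eres syllables (each of degree $\ge2$) of any reduced expression, that a de Jonqui\`eres syllable has the same degree as its inverse, and that pre- or post-composing a de Jonqui\`eres map with an element of $U$ does not change its degree. Write $g=w(h)\,w(f)\,w(h)^{-1}$ as a product of reduced words and reduce. The crucial point is that $f$, being conjugate into neither $\Aff$ nor $\J$, has reduced expressions of length $\ge2$ along which cancellation cannot propagate: when the de Jonqui\`eres syllable $j_0$ of $w(h)$ closest to the junction with $w(f)$ merges with the outer part of $w(f)$, the outcome is a de Jonqui\`eres syllable of degree $\ge\deg(j_0)\ge2$, which then blocks further merging, and the two junctions never meet. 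Consequently, in a reduced expression of $g$, every de Jonqui\`eres syllable of $w(h)$ other than $j_0$ survives, every de Jonqui\`eres syllable of $w(h)^{-1}$ other than $j_0^{-1}$ survives, and the boundary region contributes total degree $\ge(\deg j_0)^2$ — either through two merges, each of degree $\ge\deg j_0$, or through one such merge together with the untouched copy $j_0^{-1}$ coming from $w(h)^{-1}$. Multiplying degrees, $\deg(g)\ge(\deg(h)/\deg j_0)\cdot(\deg(h^{-1})/\deg j_0)\cdot(\deg j_0)^2=\deg(h)^2=e^2$, so $h$ works once more; and in any configuration where this estimate should fail to be immediate, one instead exhibits a conjugator of $f$ onto $g$ of degree $<e$, contradicting the minimality of $h$.

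The main obstacle is this last case: organising the finitely many configurations — according to which of $\Aff,\J$ the reduced words $w(h)$ and $w(f)$ begin and end in, and according to whether the affine junctions fall into $U$ — and verifying in each that the cancellation reaches no further than $j_0$ and that $j_0$ gets counted with multiplicity two, every such verification resting on $f$ being conjugate into neither factor of the amalgamated product. One can alternatively package this bookkeeping through the action of $\Aut(\A^2_\k)$ on its Bass-Serre tree, relating $\deg(g)$ to the distance from the base vertex to the axis of the hyperbolic isometry $g$ and to the translation data of $f$ along its own axis; that is the route I would ultimately take if the direct case analysis became unwieldy.
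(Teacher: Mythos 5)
Your reduction of $(2)$ to $(1)$ via Proposition~\ref{Prop:IfConstructibleThenClosed} with $\tau(d)=\lfloor\sqrt d\rfloor$ is exactly right, and your first case, $I_h\notin\{I_f,I_{f^{-1}}\}$, is a correct and complete application of Lemma~\ref{Lem:ConditionDegree} (applied to $f\circ h^{-1}$ and then to $h\circ(fh^{-1})$). The problem is the remaining case $I_h\in\{I_f,I_{f^{-1}}\}$, which is where all the difficulty sits and which you only sketch. The central claims there --- that cancellation in a reduced expression of $hfh^{-1}$ cannot propagate past the first de Jonqui\`eres syllable $j_0$ of $h$, that ``the boundary region contributes total degree $\ge(\deg j_0)^2$'', and that ``in any configuration where this estimate should fail to be immediate, one instead exhibits a conjugator of degree $<e$'' --- are precisely what needs to be proved, and none of them is. The last one in particular is not a fallback but the entire content of the statement in that case; naming the Bass--Serre tree as an alternative does not supply the missing estimates. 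As written, assertion $(1)$ is not established.

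The gap can be closed without any amalgamated-product bookkeeping, and this is what the paper does. Your minimality assumption on $\deg(h)$ already gives $\deg(h)\le\deg(hf^{l})$ for all $l\in\Z$ (since $hf^{l}$ conjugates $f$ to $g$ as well); the point you are missing is to exploit this through Lemma~\ref{Lem:ConditionDegree} in the \emph{other} direction. Since $\deg\bigl((hf)\circ f^{-1}\bigr)=\deg(h)\le\deg(hf)<\deg(hf)\deg(f^{-1})$, the composition $(hf)\circ f^{-1}$ drops degree, so $X_{f^{-1}}\subset I_{hf}$, i.e.\ $I_{hf}=I_f$ (single points); likewise $I_{hf^{-1}}=I_{f^{-1}}$. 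Now $I_f\ne I_{f^{-1}}$, so either $I_h\ne I_f=I_{hf}$, in which case $\deg(g)=\deg\bigl((hf)\circ h^{-1}\bigr)=\deg(hf)\deg(h^{-1})\ge\deg(h)^2$, or $I_h\ne I_{f^{-1}}=I_{hf^{-1}}$, in which case the same computation applied to $g^{-1}=hf^{-1}h^{-1}$ gives $\deg(g)=\deg(g^{-1})\ge\deg(h)^2$. This covers your problematic case (indeed all cases) in two lines and renders the reduced-word analysis unnecessary.
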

\begin{remark}
The bound of $(1)$ also exists for $\Bir(\A^2_\k)$, but is really higher \cite{BC13}.
\end{remark}
\begin{proof}
Proposition~\ref{Prop:IfConstructibleThenClosed} yields $(1)\Rightarrow(2)$, so we only need to prove $(1)$.

Recall that $I_g=X_{g^{-1}}$ consists of one point, for each $g\in \Aut(\A^2_\k)$ of degree $>1$ (Remark~\ref{Remark:Dimension2Xf}).

Let $g=hfh^{-1}$ be an element of degree $d$. Replacing $h$ with $hf^{l}$, $l\in \Z$, we can assume that $$\deg(h)\le \deg(hf^l)\mbox{ for each }l\in \Z.$$
We can  also assume that $\deg(h)\ge 2$.
This implies, since $\deg(h)<\deg(hf)\cdot \deg(f^{-1})$, that $I_f=X_{f^{-1}}= I_{hf}$ (Lemma~\ref{Lem:ConditionDegree}). Similarly, we have $\deg(h)<\deg(hf^{-1})\cdot \deg(f)$, so $I_{f^{-1}}=X_f=I_{hf^{-1}}$.

Because the two points $I_f,I_{f^{-1}}\in H_\infty$ are distinct, we have $I_h\not=I_f$ or $I_h\not=I_{f^{-1}}$. 

If $I_h\not=I_f=I_{hf}$, we have $\deg(hfh^{-1})=\deg(hf) \deg(h^{-1})\ge \deg(h)\deg(h^{-1})$.

If $I_h\not=I_{f^{-1}}=I_{hf^{-1}}$, we have $\deg(hf^{-1}h^{-1})=\deg(hf^{-1}) \deg(h^{-1})\ge \deg(h)\deg(h^{-1})$.

The degree of an element of $\Aut(\A^2_\k)$ and its inverse being the same, we find $\deg(g)\ge \deg(h)^2$.\end{proof}

\begin{proof}[Proof of Theorem~$\ref{Thm:Regular}$]
Part $(1)$ and $(2)$ correspond to the statements of Corollary~\ref{Coro:LimitConjugatesInside}. Part $(3)$ is provided by Proposition~\ref{Prop:ConjclassclosedHenon}. 

It remains to show $(4)$. For each $d\in \N$, the set $$C_d=\{gfg^{-1}\mid g\in \SAut(\A^n_\k)_{\le d}\}$$ is closed in $\SAut(\A^n_\k)$ for each $d$, and the conjugacy class of $f$ is the infinite union $C=\bigcup_d C_d$. Let $A$ be an algebraic variety, $F\colon A\to \SAut(\A^n_{\k})$ be a morphism, and let $B\subset A$ be a locally closed subset which is contained in $F^{-1}(C)$. Writing $B_d=F^{-1}(C_d)$, the set $B_d$ is closed in $B$ for each $d$ and $B=\bigcup_d B_d$. Since $\k$ is uncountable and $B_d\subset B_{d+1}$ for each $d$, we obtain $B=B_m$ for some integer $m$. Hence, $B$ is contained in $F^{-1}(C_m)$, which is closed in $A$, so the closure of $B$ in $A$ is also contained in $F^{-1}(C_m)$, and thus in $F^{-1}(C)$.\end{proof}

\section{Conjugacy classes of elements in $\SAut(\A^2_K)$}\label{Sec:ConjclassDim2}
\subsection{Reminders on Jung - van der Kulk's Theorem and its applications}
For each field $K$, the Jung - van der Kulk's Theorem (\cite{Jun42}, \cite{VdK53}) asserts that the group $\Aut(\A^2_K)$ is generated by the groups
\[
	\begin{array}{rcl}
		\Aff(\A^2_K)&=&\left\{(ax_1+bx_2+e,cx_1+dx_2+f)\ |\ \left(\begin{array}{cc} a& b \\ c & d \end{array}\right)\in \GL(2,K), e,f\in K\right\}, \\
		\J(\A^2_K)&=&\left\{(ax_1+P(x_2),bx_2+c)\ |\ a,b\in K^{*}, c\in K, P\in K[x_2]\right\}.
	\end{array}
\]
Multiplying the decomposition of an element of $\SAut(\A^2_K)$ with homotheties we can assume that each one has determinant $1$. This implies that the group $\SAut(\A^2_K)$ is generated by the groups
\[
	\begin{array}{rcl}
		\SAff(\A^2_K)&=&\left\{(ax_1+bx_2+e,cx_1+dx_2+f)\ |\ \left(\begin{array}{cc} a& b \\ c & d \end{array}\right)\in \SL(2,K), e,f\in K\right\}, \\
		\SJ(\A^2_K)&=&\left\{(ax_1+P(x_2),a^{-1}x_2+c)\ |\ a\in K^{*}, c\in K, P\in K[x_2]\right\}.
	\end{array}
\]
The Jung - van der Kulk's Theorem implies that we have an amalgamated product structure on $\SAut(\A^2_K)$. It also yields the following classical result. We recall here the simple proof.
\begin{lemma}\label{Lem:Henon}
Every element $f\in \SAut(\A^2_K)$ is conjugated either to an element of $\SJ(\A^2_K)$ or to an element of the form 
$$f=a_m j_m  \dots a_1j_1$$
where $m\ge 1$ and each $a_i\in \SAff(\A^2_K)\setminus \SJ(\A^2_K)$ and each $j_i\in \SJ(\A^2_K)\setminus \SAff(\A^2_K)$.
\end{lemma}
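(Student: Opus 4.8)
The plan is to use the amalgamated product structure on $\SAut(\A^2_K)$ coming from the Jung--van der Kulk theorem and apply the standard normal-form argument for conjugacy classes in amalgamated products (Serre's theory of groups acting on trees). First I would recall that any element of an amalgamated product $G = A *_C B$ is conjugate either to an element of $A$ or of $B$, or to a cyclically reduced word $g_1 g_2 \cdots g_k$ of even length $k = 2m$ with the $g_i$ alternating between $A \setminus C$ and $B \setminus C$, and that cyclically reduced words of length $\ge 2$ are not conjugate into $A$ or into $B$. Here $A = \SAff(\A^2_K)$, $B = \SJ(\A^2_K)$, and $C = A \cap B$ is the group of elements lying in both, i.e.\ the ``linear triangular'' maps $(ax_1 + bx_2, a^{-1}x_2)$ (affine maps that are also de Jonqui\`eres).

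The key steps, in order: (1) Write $f$ as a word in generators from $\SAff(\A^2_K)$ and $\SJ(\A^2_K)$ using Jung--van der Kulk (with the determinant normalisation already explained in the text so that all factors lie in the special subgroups). (2) Reduce this word: whenever two consecutive factors lie in the same factor group, multiply them together; whenever a factor lies in the amalgamated subgroup $C = \SAff \cap \SJ$, absorb it into a neighbour. This yields either a single element of $\SAff$ or of $\SJ$, or an alternating reduced word. (3) If the reduced word has even length and alternates starting and ending in different factors, conjugate cyclically to make the first factor come from $\SAff \setminus \SJ$ and the last from $\SJ \setminus \SAff$ (or vice versa); if it has odd length, a single conjugation by the first or last letter shortens it or makes it cyclically reduced of even length. (4) An element conjugate into $\SAff(\A^2_K)$ is then in particular conjugate into... well, one must still handle the case of an element conjugate into $\SAff(\A^2_K) \setminus \SJ(\A^2_K)$: such an element is a special affine transformation, hence (being over a field, after possibly extending scalars or just using rational canonical form over $K$ when $K$ is not algebraically closed — but here we only claim conjugacy inside $\SAut(\A^2_K)$) it is conjugate in $\SL(2,K) \ltimes K^2$ to something, but the cleanest route is: every element of $\SAff(\A^2_K)$ is conjugate by an affine map to one fixing the origin, i.e.\ to an element of $\SL(2,K)$, and any element of $\SL(2,K)$ is conjugate to an upper triangular matrix over $\bar K$, but since we want conjugacy over $K$ we instead just note that $\SAff(\A^2_K) \subset \SJ(\A^2_K)$ is false, so we genuinely need the statement to allow ``conjugate to an element of $\SJ(\A^2_K)$'' to cover the affine case — and indeed an element of $\SL(2,K)$ that is not already triangular over $K$ is conjugate over $K$ to its rational canonical form, which is not in $\SJ$; so the correct reading is that the lemma's first alternative should be ``conjugate into $\SJ(\A^2_K)$ \emph{or} is a special affine map of a controlled type,'' but since the paper states it as written I would interpret $\SJ(\A^2_K)$-conjugacy loosely and absorb the affine case by noting every element of $\SAff(\A^2_K)$ whose linear part is triangularisable over $K$ lands in $\SJ$, and non-triangularisable ones are elliptic/finite-order-like and can be treated separately — actually the honest statement is that elements conjugate into $A = \SAff$ form the ``algebraic'' case and the dichotomy in the lemma is between (algebraic, hence after conjugation in $\SJ$ up to the semisimple-affine subtlety) and (the cyclically reduced even-length H\'enon form); I would phrase step (4) as observing $\SAff(\A^2_K)$ and $\SJ(\A^2_K)$ generate with amalgam $C$, and an element conjugate into $\SAff$ has bounded degree sequence and via the affine conjugation fixing a point plus triangularising reduces into $\SJ$ over $\bar K$, which suffices for the applications. (5) Finally, rewrite the cyclically reduced even word $g_1 \cdots g_{2m}$ as $a_m j_m \cdots a_1 j_1$ by regrouping adjacent pairs, using that a factor from $\SAff \setminus \SJ$ composed with the next from $\SJ \setminus \SAff$ gives the desired alternating shape; reindexing gives exactly the claimed form.

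The main obstacle I anticipate is step (4): handling elements whose conjugacy class meets $\SAff(\A^2_K)$ but which are not visibly conjugate into $\SJ(\A^2_K)$ over the base field $K$ (a special affine map with irreducible characteristic polynomial of its linear part). Over an algebraically closed field this disappears — any $2 \times 2$ matrix of determinant $1$ is conjugate to an upper-triangular one — and since the ambient field in the applications is often $\k$ algebraically closed (or the lemma is applied after scalar extension), the clean fix is to invoke algebraic closure of $K$ where needed, or to simply note that the statement as used only requires: $f$ is conjugate to an element of $\SJ(\A^2_K)$ \emph{after a field extension}, or is of H\'enon form over $K$ itself. Everything else is the standard Bass--Serre normal form bookkeeping, which is routine once the amalgam structure $\SAut(\A^2_K) = \SAff(\A^2_K) *_{C} \SJ(\A^2_K)$ is in hand from Jung--van der Kulk.
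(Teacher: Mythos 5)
Your argument follows the same route as the paper's: decompose $f$ via Jung--van der Kulk, reduce the word using the amalgam structure $\SAut(\A^2_K)=\SAff(\A^2_K)*_C\SJ(\A^2_K)$ with $C=\SAff(\A^2_K)\cap\SJ(\A^2_K)$, and conjugate cyclically until one reaches either a factor group or an alternating word of even length; your steps (1), (2), (3) and (5) are exactly the printed proof. The one place where you waver, step (4), is the right thing to worry about, but you should resolve it cleanly rather than propose to read the statement ``loosely''. The paper settles the affine case in one line --- the extension of $f\in\SAff(\A^2_K)$ to $\p^2_K$ fixes a point of the line at infinity --- and this is precisely the assertion that the linear part of $f$ has an eigenvector over $K$. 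That holds for every element of $\SL(2,K)$ when $K$ is algebraically closed, which is the only setting in which the later sections invoke the ``conjugate into $\SJ$'' alternative, so for the purposes of the article your proof closes. Over a general field the lemma as stated is in fact false, not merely harder: for $K=\mathbb{R}$ the rotation $(x_2,-x_1)$ has a linear part that is not triangularisable over $\mathbb{R}$, hence is not conjugate inside $\SAff(\A^2_{\mathbb{R}})$ to any element of $C$, hence by the conjugacy theorem for amalgamated products is not conjugate into $\SJ(\A^2_{\mathbb{R}})$ at all; being of finite order it is not conjugate to a H\'enon word either. So the honest fix is not to weaken the conclusion but to add the hypothesis that every element of $\SL(2,K)$ is triangularisable over $K$ (e.g.\ $K$ algebraically closed), under which your step (4) reduces to the paper's one-line argument and the rest is, as you say, routine Bass--Serre bookkeeping.
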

\begin{proof}
We write $f$ a a product of elements of $A=\SAff(\A^2_K)$ and $J=\SJ(\A^2_K)$: 
$$f=a_{m} j_ma_{m-1} \dots a_1j_1a_0$$
where each $a_i\in A$ and each $j_i\in J$. By merging elements we can moreover assume that $j_i\not\in A$ for $i=1,\dots,m$, and that $a_i\not\in J$ for $i=1,\dots,{m-1}$.

Suppose first that $m=0$, which implies that $f=a_0\in A$, so extends to an element of $\Aut(\p^2_K)$. The action at infinity has a fixed point, and conjugating by an element of $\SL(2,K)$, we can assume that this point corresponds to $[0:1:0]$, which implies that $f$ preserves the pencil of lines through the point, so $f\in J$.

Suppose now that $m\ge 1$, which implies that $j_1\in J\setminus A$. We conjugate $f$ by $a_0$ and assume that $a_0=\mathrm{id}$. If $m=1$ and $a_m=a_{1}\in J$, then $f\in J$. If $m\ge 2$ and $a_{m}\in J$, we conjugate by $j_1$ and decrease $m$ by $1$. This reduces to the case $m\ge 1$,  $a_1=\mathrm{id}, a_{m}\not\in J$.\end{proof}
\begin{remark}\label{Rem:Henon}
Elements of the form $f=a_m j_m\dots a_1 j_1$, where $m\ge 1$ and each $a_i\in \SAff(\A^2_K)\setminus \SJ(\A^2_K)$ and each $j_i\in \SJ(\A^2_K)\setminus \SAff(\A^2_K)$ are usually called \emph{H\'enon automorphisms}.  Note that each element $j_i$ satisfies $X_{j_i}=I_{j_i}=[0:1:0]$, and that each $a_i$ extends to an automorphism of $\p^2_K$ that moves $[0:1:0]$. By Lemma~\ref{Lem:ConditionDegree}, this implies that 
\[\deg(f)=\prod_{i=1}^m\deg(j_i), I_f=[0:1:0], X_f=I_{f^{-1}}=a_m([0:1:0]).\]
In particular, $I_f\cap I_{f^{-1}}=\emptyset$, $\deg(f)>1$ and $\deg(f^m)=\deg(f)^m$ for each $m\ge 1$ (Corollary~\ref{Coro:Dim2}). Moreover, the conjugacy class of $f$ in $\SAut(\A^2_{\overline{K}})$ is closed, where $\overline{K}$ is the algebraic closure of $K$  (Proposition~\ref{Prop:ConjclassclosedHenon}).
\end{remark}
\begin{remark}\label{Rem:AlgebraicSJ}
Elements of $\SJ(\A^2_K)$ are algebraic since they are contained in an algebraic subgroup of $\Aut(\A^2_K)$ (or equivalently have a bounded degree sequence). H\'enon automorphisms are not algebraic as their degree sequence is not bounded. Hence, Lemma~\ref{Lem:Henon} corresponds to saying that non-algebraic elements are conjugate to H\'enon automorphisms, and algebraic automorphisms to elements of $\SJ(\A^2_K)$.
\end{remark}
The following corollary follows from Lemma~\ref{Lem:Henon}, and also holds (with the same proof) for $\Aut(\A^2_\k)$. This was already observed in \cite{FurterIt}.
\begin{corollary}\label{Coro:Algclosed}\item
\begin{enumerate}[$(1)$]
\item
An element $f\in \SAut(\A^2_\k)$ is algebraic if and only if $\deg(f^2)\le \deg(f)$.
\item

The subset $\SAut(\A^2_\k)_{\mathrm{alg}}$ of algebraic elements of $\SAut(\A^2_\k)$ is closed.
\end{enumerate}

\end{corollary}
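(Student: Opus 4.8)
The plan is to deduce both assertions from the trichotomy provided by Lemma~\ref{Lem:Henon}, Corollary~\ref{Coro:Dim2}, and the degree estimate in Remark~\ref{Rem:Henon}. For $(1)$: if $f$ is algebraic, then $\{\deg(f^n)\}$ is bounded, so in particular $\deg(f^2)$ cannot equal $\deg(f)^2$ unless $\deg(f)\le 1$; by Corollary~\ref{Coro:Dim2} the failure of $\deg(f^2)=\deg(f)^2$ forces $\deg(f^2)\le\deg(f)$ (in dimension $2$ the only alternative to $\deg(f^2)=\deg(f)^2$ is $\deg(f^2)<\deg(f)\cdot\deg(f)$, and a short direct check of the de Jonqui\`eres/affine generators shows this actually gives $\deg(f^2)\le\deg(f)$ — or one simply invokes that $f$ is conjugate to an element of $\SJ(\A^2_\k)$ by Lemma~\ref{Lem:Henon} and Remark~\ref{Rem:AlgebraicSJ}, and elements of $\SJ$ have $\deg(f^2)\le\deg(f)$). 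Conversely, if $\deg(f^2)\le\deg(f)$, then $\deg(f^2)\ne\deg(f)^2$ once $\deg(f)\ge 2$, so by Corollary~\ref{Coro:Dim2} we are not in the H\'enon case, hence $f$ is conjugate to an element of $\SJ(\A^2_\k)$ by Lemma~\ref{Lem:Henon}, and such elements are algebraic by Remark~\ref{Rem:AlgebraicSJ}; the degree-$\le 1$ case is trivially algebraic. This proves $(1)$.

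For $(2)$, I would use the characterization from $(1)$ together with the fact (Lemma~\ref{Lemm:StructureAffine}) that each $\SAut(\A^2_\k)_{\le d}$ is a closed affine variety, and the criterion of Lemma~\ref{Lem:Shafa}$(4)$: a subset of $\SAut(\A^2_\k)$ is closed iff its intersection with each $\SAut(\A^2_\k)_{\le d}$ is closed. So it suffices to show that $S_d := \SAut(\A^2_\k)_{\mathrm{alg}}\cap\SAut(\A^2_\k)_{\le d}$ is closed in $\SAut(\A^2_\k)_{\le d}$ for every $d$. By $(1)$, $S_d = \{\,f\in\SAut(\A^2_\k)_{\le d} : \deg(f^2)\le\deg(f)\,\}$. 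The map $f\mapsto f\circ f$ is a morphism $\SAut(\A^2_\k)_{\le d}\to\SAut(\A^2_\k)_{\le d^2}\subset\End(\A^2_\k)_{\le d^2}$ (squaring a degree-$\le d$ map gives degree $\le d^2$, and composition is polynomial in the coefficients). For each pair $(e,e')$ with $e\le d$, $e'\le d^2$, the locus where $\deg(f)\le e$ is closed (vanishing of the coefficients of the degree-$>e$ homogeneous parts), and similarly the locus where $\deg(f^2)\le e'$ is closed; so the condition $\deg(f^2)\le\deg(f)$ is a finite union over $e=1,\dots,d$ of the closed sets $\{\deg(f)=e\}\cap\{\deg(f^2)\le e\}$ — more carefully, $\{\deg(f)\ge e\}$ is open, so I take $S_d=\bigcup_{e=1}^{d}\bigl(\{\deg f\ge e\}\cap\{\deg f^2\le e\}\bigr)$, a finite union of locally closed sets, and check it is in fact closed because on the complement $\{\deg f = e\}$ the map $f\mapsto f^2$ is a morphism so $\{\deg f^2\le e\}$ is relatively closed there, and the stratification by $\deg f = e$ is by locally closed pieces whose closures only add lower-degree maps, which already lie in $S_d$. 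Cleanest: $S_d$ is the set of $f$ with $\deg(f^2)\le\deg(f)$, equivalently (by $(1)$) the set of $f$ not conjugate to a H\'enon map; I will verify directly that $\{f : \deg(f^2)\le \deg(f)\}$ equals the preimage under $f\mapsto(f,f^2)$ of the closed set $\{(g,h)\in\End_{\le d}\times\End_{\le d^2} : \text{each homogeneous part of }h\text{ of degree}>\deg(g)\text{ vanishes}\}$, which is closed since "$\deg h > \deg g$ in some component" is detected by polynomial conditions on the coefficients; the only subtlety is that $\deg(g)$ is itself only upper-semicontinuous, handled by the finite stratification above.

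The main obstacle is the bookkeeping in $(2)$: the naive condition "$\deg(f^2)\le\deg(f)$" involves the degree function, which jumps downward on closed subsets, so one must be careful that the set is genuinely closed and not merely constructible. The resolution is the finite stratification of $\SAut(\A^2_\k)_{\le d}$ by the locally closed strata $\{\deg(f)=e\}$, $e=0,1,\dots,d$: on each stratum's closure the extra points have strictly smaller degree, hence $f^2$ has degree $\le d\cdot(\text{smaller}) \le$ the old bound, so they remain in $S_d$; combined with the fact that on each stratum the inequality $\deg(f^2)\le e$ cuts out a (relatively) closed subset via the morphism $f\mapsto f^2$, one gets that $S_d$ is closed. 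Everything else is a routine application of the already-established structural lemmas, and $(1)$ is essentially immediate from Corollary~\ref{Coro:Dim2} and Lemma~\ref{Lem:Henon}.
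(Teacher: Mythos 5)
Your part $(1)$ has a genuine gap in both directions, and it is the same gap: you never use the multiplicativity of the degree on reduced words in the amalgamated product, which is the actual content of the paper's proof. The dichotomy you assert --- that in dimension $2$ the only alternative to $\deg(f^2)=\deg(f)^2$ is $\deg(f^2)\le\deg(f)$ --- is false. Take a H\'enon map $h=a_1j_1$ of degree $2$ and conjugate it by some $j'\in \SJ(\A^2_\k)\setminus\SAff(\A^2_\k)$ of degree $3$: the reduced word $f=(j')^{-1}a_1(j_1j')$ has $\deg(f)=9$ and $\deg(f^2)=18$, so $\deg(f)<\deg(f^2)<\deg(f)^2=81$. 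Corollary~\ref{Coro:Dim2} gives no upper bound on $\deg(f^2)$ when it is not $\deg(f)^2$, and it says nothing about whether $f$ is \emph{conjugate} to a H\'enon map (the $f$ above is, yet fails $\deg(f^2)=\deg(f)^2$), so your converse step ``not dynamically regular, hence conjugate into $\SJ$'' is not available. Your fallback for the forward direction --- ``$f$ is conjugate to $j\in\SJ(\A^2_\k)$ and elements of $\SJ$ satisfy $\deg(j^2)\le\deg(j)$'' --- only controls the representative $j$, not $f=\alpha^{-1}j\alpha$ itself; degree is not a conjugation invariant. The missing ingredient is the computation via Remark~\ref{Rem:Henon} and Lemma~\ref{Lem:ConditionDegree}: after reducing the word, $\deg(f)=\deg(\alpha^{-1})\deg(j)\deg(\alpha)$, hence $\deg(f^2)=\deg(\alpha^{-1}j^2\alpha)\le\deg(\alpha^{-1})\deg(j^2)\deg(\alpha)\le\deg(f)$; and symmetrically a reduced conjugate of a H\'enon word $h$ has $\deg(f^2)=\deg(f)\cdot\deg(h)>\deg(f)$, which is what rules out the H\'enon case when $\deg(f^2)\le\deg(f)$.

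Your part $(2)$ also has a gap, at the step ``the closures only add lower-degree maps, which already lie in $S_d$'': that is exactly what is to be proved, since a map of lower degree is not automatically algebraic. If $f_0$ is a limit of algebraic maps $f_t$ of degree $e$ and $\deg(f_0)=e'<e$, semicontinuity of the coefficients only gives $\deg(f_0^2)\le e$, not $\deg(f_0^2)\le e'$, so a non-algebraic limit is not excluded by the degree-$2$ data alone. The repair is to use all iterates: the reduced-word computation above shows that an algebraic $f$ satisfies $\deg(f^n)\le\deg(f)$ for \emph{every} $n$, so
$$S_d=\bigcap_{n\ge1}\left\{f\in\SAut(\A^2_\k)_{\le d}\suchthat \deg(f^n)\le d\right\},$$
an intersection of closed sets (each is the preimage of the linear subspace $\End(\A^2_\k)_{\le d}$ under the morphism $f\mapsto f^n$ from $\SAut(\A^2_\k)_{\le d}$ to $\End(\A^2_\k)_{\le d^n}$), while a non-algebraic limit would have $\deg(f_0^n)\to\infty$. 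Your identification of the semicontinuity issue is correct and the stratification set-up is reasonable, but as written the argument is circular at the crucial point.
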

\begin{proof}
$(1)(a)$ If $f$ is algebraic it is conjugate to an element $j\in\SJ(\A^2_K)$ (Lemma~\ref{Lem:Henon}, or more precisely Remark~\ref{Rem:AlgebraicSJ}).   We can thus write $f=\alpha^{-1} j\alpha$ where $\alpha\in \SAut(\A^2_K)$. We write then $\alpha=a_1j_1\dots$, where each $a_i\in \SAff(\A^2_K)\setminus \SJ(\A^2_K)$ and each $j_i\in \SJ(\A^2_K)$ (if $\alpha$ starts with an element of $\SJ(\A^2_K)\setminus \SAff(\A^2_K)$ we can merge this element with~$j$). 
If $j\notin \SAff(\A^2_K)$, then $\deg(f)=\prod \deg(j_i)^2 \deg(j)=\deg(\alpha^{-1})\deg(j)\deg(\alpha)$ (see Remark~\ref{Rem:Henon}) and thus $\deg(f^2)=\deg(\alpha^{-1} j^2 \alpha)\le \deg(\alpha^{-1})\deg(j^2)\deg(\alpha)\le \deg(f)$ since $\deg(j^2)\le \deg(j)$. If $j\in \SAff(\A^2_K)\cap \SJ(\A^2_K)$, then we can replace $j$ with $(a_1)^{-1} j a_1$ and continue like this to obtain either the previous case or $f=\alpha^{-1} a\alpha$ where $a\in \SAff(\A^2_K)$, $\alpha=j_1a_1\dots$, each $a_i\in \SAff(\A^2_K)\setminus \SJ(\A^2_K)$ and each $j_i\in \SJ(\A^2_K)\setminus \SAff(\A^2_K)$. We obtain similarly $\deg(f^2)\le \deg(f)$.

$(1)(b)$ If $f$ is not algebraic, then $f$ is conjugate to a H\'enon map $h=a_m j_m  \dots a_1j_1$, where each $a_i\in \SAff(\A^2_K)\setminus \SJ(\A^2_K)$ and each $\SJ(\A^2_K)\setminus \SAff(\A^2_K)$ (Lemma~\ref{Lem:Henon}, or more precisely Remark~\ref{Rem:AlgebraicSJ}). Writing $f=\alpha^{-1} h\alpha$ where $\alpha\in \SAut(\A^2_K)$, we can write as before $\alpha$ as a product of elements of $\SAff(\A^2_K)$ and $\SJ(\A^2_K)$, replace $h$ if $\alpha$ starts with an element $j_1'\in \SJ(\A^2_K)$ such that $j_1j_1'\in \SAff(\A^2_K)$ or if it starts with an element $a_1'\in \SAff(\A^2_K)$ such that $(a_1')^{-1}\alpha_m\in \SJ(\A^2_K)$ and finish with a simplified writing where we can directly read that $\deg(f^2)>\deg(f)$.

Assertion $(2)$ corresponds to ask that the set of algebraic elements of $\SAut(\A^2_K)_{\le d}$ is closed, for each $d\ge 1$. This follows from $(1)$.
\end{proof}
\subsection{Conjugacy classes of elements of $\SJ(\A^2_K)$.}It is easy to decide whether two H\'enon automorphisms are conjugate, using the amalgamated product structure. The writing of such an element is unique up to cycling permutation of the elements and up to inserting elements of $\SJ(\A^2_K)\cap \SAff(\A^2_K)$. The classification of conjugacy classes of $\SAut(\A^2_K)$ reduces thus to the study of elements of $\SJ(\A^2_K)$.

\begin{lemma}\label{Lem:Fourfamilies}
Let $f\in\SJ(\A^2_K)$ be some element. After conjugation in this group, the element belongs to one of the following families:
\[\begin{array}{lll}
(i)& (ax_1,a^{-1}x_2),& a\in K\setminus \{0,1\},\\
(ii)&(x_1+P(x_2),x_2),& P\in K[x_2],\\
(iii)&(\zeta x_1+(x_2)^{m-1}P((x_2)^m),\zeta^{-1}x_2), & \zeta\in K\setminus \{0,1\}\mbox{ a primitive $m$-th root of unity},\\
&& P\in K[x_2]\setminus\{0\},\\
(iv) &\left\{\begin{array}{l}
(x_1,x_2+1),\\
(x_1+(x_2)^{p-1}P((x_2)^p),x_2+1)\end{array}\right.& \begin{array}{l}
\car(K)=0,\\
P\in K[x_2],\car(K)=p.\end{array}\end{array}\]
\end{lemma}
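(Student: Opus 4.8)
The plan is to take an arbitrary element $f=(ax_1+P(x_2),a^{-1}x_2+c)\in\SJ(\A^2_K)$ and normalise it by conjugation within $\SJ(\A^2_K)$ itself, distinguishing the two cases $a\neq 1$ and $a=1$. Observe first that conjugation by $(x_1,x_2+\lambda)$ (which lies in $\SJ$) translates $x_2$, conjugation by $(\mu x_1,\mu^{-1}x_2)$ rescales, and conjugation by $(x_1+Q(x_2),x_2)$ adds to the $x_1$-component the ``coboundary'' $Q(a^{-1}x_2+c)-aQ(x_2)$; these are the only moves available and they are exactly what is needed.

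\textbf{Case $a\neq 1$ (types $(i)$, $(iii)$).} Here I would first conjugate by a translation $(x_1,x_2+\lambda)$ with $\lambda=\tfrac{c}{1-a^{-1}}$ to kill the constant $c$, so that $f=(ax_1+P(x_2),a^{-1}x_2)$. Now conjugating by $(x_1+Q(x_2),x_2)$ replaces $P(x_2)$ by $P(x_2)+Q(a^{-1}x_2)-aQ(x_2)$. Writing $P=\sum p_k x_2^k$ and $Q=\sum q_k x_2^k$, the coefficient of $x_2^k$ gets shifted by $q_k(a^{-k}-a)$; hence every monomial $x_2^k$ with $a^k\neq a^{-1}$, i.e.\ $a^{k+1}\neq 1$, can be removed. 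If $a$ is not a root of unity, this kills all of $P$ and we land in $(i)$ (with $P=0$; note if $P\equiv0$ from the start we are already there, and the case $a=0$ is excluded since $a\in K^\ast$). If $a$ is a primitive $m$-th root of unity with $m\geq 2$, the surviving monomials are those with $k\equiv -1\pmod m$, i.e.\ $k=m-1,2m-1,\dots$, which is exactly $(x_2)^{m-1}\tilde P((x_2)^m)$ for some $\tilde P\in K[x_2]$; if $\tilde P=0$ we are in $(i)$, otherwise in $(iii)$ (and a further rescaling by $(\mu x_1,\mu^{-1}x_2)$ normalises a leading coefficient, though the statement does not require this).

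\textbf{Case $a=1$ (types $(ii)$, $(iv)$).} Now $f=(x_1+P(x_2),x_2+c)$. If $c=0$ we are immediately in family $(ii)$. If $c\neq 0$, conjugating by $(\mu x_1,\mu^{-1}x_2)$ with $\mu=c$ rescales $c$ to $1$, giving $f=(x_1+P(x_2),x_2+1)$. The remaining conjugation by $(x_1+Q(x_2),x_2)$ now shifts $P$ by $Q(x_2+1)-Q(x_2)$, the difference operator $\Delta Q$. I would analyse the image of $\Delta$ on $K[x_2]$: in characteristic $0$, $\Delta$ is surjective onto the polynomials of degree $\geq 0$ up to a constant — more precisely $\mathrm{Im}(\Delta)$ is a complement of the constants, so $P$ can be reduced to a constant and then, after a final translation in $x_1$ (absorbed by conjugation, or noting $(x_1+\text{const},x_2+1)$ is conjugate to $(x_1,x_2+1)$ via a translation in $x_2$), to $P=0$, landing in the first line of $(iv)$. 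In characteristic $p$, $\Delta$ kills exactly the constants, and a dimension/degree count on each graded piece shows $\mathrm{Im}(\Delta)$ is spanned by suitable polynomials; the cokernel is identified with the span of $\{(x_2)^{k}: k\equiv p-1 \pmod p,\ k\ge p-1\}\cup\{1\}$ type structure, which after cleanup gives $P=(x_2)^{p-1}\tilde P((x_2)^p)$ plus possibly a constant killed by translation — the second line of $(iv)$.

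\textbf{Main obstacle.} The routine bookkeeping (translations, rescalings) is easy; the genuine content is the two cohomological computations: identifying $\mathrm{coker}$ of the twisted operator $Q\mapsto Q(a^{-1}x_2)-aQ(x_2)$ when $a$ is a root of unity, and of the difference operator $Q\mapsto\Delta Q$ in characteristic $p$. Both are elementary linear algebra over $K[x_2]$ graded by degree, but one must be careful about the characteristic-$p$ phenomenon where $\Delta((x_2)^p)=((x_2+1)^p-(x_2)^p)$ has degree $p-1$ rather than $p-1$ in the naive sense — i.e.\ degrees can drop — so the reduction must be organised by a well-chosen ordering (highest degree first, but tracking which leading terms are actually hit). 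This is the step where the exceptional monomials $(x_2)^{m-1}$ and $(x_2)^{p-1}$ emerge, and getting the indexing exactly right is where care is needed.
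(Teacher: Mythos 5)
Your proposal is correct and follows essentially the same route as the paper: normalise the constant in the second coordinate, then kill the coefficients of $P$ one degree at a time by conjugating with shears $(x_1+Q(x_2),x_2)$, the surviving monomials being exactly those $(x_2)^k$ with $a^{k+1}=1$ (resp.\ with $p\mid k+1$ when $a=1$, $c=1$), which yields families $(i)$--$(iv)$. The only slip is cosmetic: in characteristic $0$ the difference operator $\Delta$ is in fact surjective on $K[x_2]$ (e.g.\ $\Delta(x_2)=1$), not merely onto a complement of the constants, but your separate handling of the constant term makes this harmless.
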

\begin{proof}
An element  $f\in \SJ(\A^2_K)$ is equal to $(ax_1+P(x_2),a^{-1}x_2+c)$ for some $a\in K^*$, $c\in K$, $P\in K[x_2]$. 

If $a\not=1$, we conjugate by $\left(x_1,x_2+\frac{ac}{1-a}\right)$ and can assume that $c=0$. Conjugating by $(x_1-\lambda (x_2)^n,x_2)$ yields $(ax_1+P(x_2)+\lambda (a-a^{-n})(x_2)^n,a^{-1}x_2).$
We can then kill the coefficient of degree $n$ of $P$, if $a\not=a^{-n}$. This gives $(i)$ or $(iii)$.

If $a=1$ and $c=0$, we get $(ii)$. If $a=1$ and $c\not=0$, we conjugate by $(cx_1,c^{-1}x_2)$ and can assume that $c=1$. Conjugating $(x_1+P(x_2),x_2+1)$ by $(x_1-\lambda (x_2)^{n+1},x_2)$ yields $(x_1+P(x_2)+\lambda ((x_2)^{n+1}-(x_2+1)^{n+1}),x_2+1).$
We can thus kill the coefficient of degree $n$ if $n+1$ is not a multiple of $\car(K)$. This gives  $(iv)$.
\end{proof}
The most complicate case corresponds to Family $(iv)$, in the case of a field of characteristic $p>0$. In order to describe the conjugacy classes among the family, we will need the following lemma:
\begin{lemma}\label{Lemma:ExactSeqP}
Let $K$ be a field of characteristic $p>0$. Let $V\subset K[x]$ be the subvector space given by $$V=\{x^{p-1}P(x^p)\mid P\in K[x]\}$$ and let $\delta,N$ be the following $K$-linear maps
$$\begin{array}{rrcl}
\delta\colon &K[x]&\to& K[x]\\
& F(x)&\mapsto & F(x+1)-F(x),\\ \\
N\colon &K[x]&\to& K[x]\\
& F(x)&\mapsto & F(x)+F(x+1)+\dots +F(x+p-1).
\end{array}$$
Then, $\Im(\delta)=\Ker(N)$ and $K[x]=V \oplus \Im(\delta)=V \oplus \Ker(N)$.
\end{lemma}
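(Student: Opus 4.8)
The plan is to work with the operators $\delta$ and $N$ purely algebraically, exploiting the factorisation $X^p - X = \prod_{a\in\mathbb{F}_p}(X-a)$ in $\mathbb{F}_p[X]$ which lifts to an identity of difference operators. First I would observe that if $T\colon K[x]\to K[x]$ denotes the shift $T(F)(x)=F(x+1)$, then $\delta = T - \mathrm{id}$ and $N = \mathrm{id}+T+\dots+T^{p-1}$, so that $N\circ\delta = \delta\circ N = T^p - \mathrm{id}$. But $T^p$ is the shift by $p$, and since $\car(K)=p$ we have $(x+p) = x$ in $K$, hence $T^p = \mathrm{id}$ and therefore $N\circ\delta = 0$. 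This immediately gives $\Im(\delta)\subseteq\Ker(N)$.

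For the reverse inclusion $\Ker(N)\subseteq\Im(\delta)$, I would argue by a degree/dimension count in each finite-dimensional piece. Both $\delta$ and $N$ preserve the filtration $K[x]_{\le d}$ of polynomials of degree $\le d$: indeed $\delta$ strictly drops degree by exactly one on nonzero polynomials (the leading terms cancel), so $\delta(K[x]_{\le d}) \subseteq K[x]_{\le d-1}$ and in fact $\delta(K[x]_{\le d}) = K[x]_{\le d-1}$ since $\delta$ restricted to $K[x]_{\le d}$ has kernel exactly the constants (a polynomial fixed by $T$ is constant, as $K$ is infinite). Hence $\dim\Im(\delta|_{K[x]_{\le d}}) = d$. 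On the other hand $N$ multiplies the leading coefficient of a degree-$e$ polynomial by $p = 0$ unless... — more carefully, on $x^e$ the operator $N$ gives $p x^e + (\text{lower}) = (\text{lower order})$, so $N$ also strictly drops degree, and one computes that $\dim\Ker(N|_{K[x]_{\le d}})$: since $\Im(\delta)\subseteq\Ker(N)$ and $\dim\Im(\delta|_{K[x]_{\le d}})=d=\dim K[x]_{\le d} - 1$, it suffices to show $N$ is not identically zero on $K[x]_{\le d}$, i.e. $N\ne 0$; but $N(1) = p\cdot 1 = 0$, so I must instead exhibit some polynomial with $N(F)\ne 0$. Take $F$ with $\deg F = p-1$: then checking the coefficient of $x^{p-1}$ in $N(x^{p-1})$ — it is $p=0$ — this does not work directly either, so the honest route is: $N$ drops degree by at least one on all of $K[x]$, and I claim $N(x^{p}) \neq 0$ — here the $x^p$-coefficient is again $p=0$, but the next coefficient of $N(x^p) = \sum_{a}(x+a)^p = \sum_a (x^p + a^p) = p x^p + (\sum_a a^p) = \sum_{a\in\mathbb{F}_p} a = 0$ as well. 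This suggests the cleaner claim: $\Ker(N)$ has finite codimension $0$?? No. The correct statement I should prove is that $N$ restricted to each $K[x]_{\le d}$ has rank exactly $d - (\text{number of multiples of } p \text{ in } \{0,\dots,d\}) $ or similar; rather than fight this, the clean approach is to directly compute $N$ on a convenient basis.

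The key step, and the main obstacle, is getting the exact image/kernel of $N$; I would do this via the basis of "falling factorials" or binomial polynomials $\binom{x}{k}$, on which $\delta$ acts by $\delta\binom{x}{k} = \binom{x}{k-1}$, making $\delta$ transparent. For $N$ one computes $N\binom{x}{k} = \sum_{j=0}^{p-1}\binom{x+j}{k}$; using Vandermonde this equals $\sum_j \sum_i \binom{x}{k-i}\binom{j}{i} = \sum_i \binom{x}{k-i}\big(\sum_{j=0}^{p-1}\binom{j}{i}\big) = \sum_i \binom{x}{k-i}\binom{p}{i+1}$. Since $\binom{p}{i+1}\equiv 0 \pmod p$ for $1\le i+1\le p-1$ and $\binom{p}{p}=1$, only $i=0$ (coefficient $\binom{p}{1}=p=0$) and $i=p-1$ (coefficient $1$) survive, giving $N\binom{x}{k} = \binom{x}{k-p+1}$ (interpreted as $0$ when $k<p-1$). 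Thus in the binomial basis, $\delta$ is the shift-down-by-one and $N$ is the shift-down-by-$(p-1)$, so $\Ker(N) = \mathrm{span}\{\binom{x}{k}\mid 0\le k\le p-2\} \oplus \delta(\text{everything})$... — precisely, $\Ker(N) = \bigoplus_{k=0}^{p-2}K\binom{x}{k} \ \oplus\ \bigoplus_{k\ge p-1, \text{appropriate}}$; cleanly, $F=\sum_k c_k\binom{x}{k}\in\Ker(N)$ iff $c_k = 0$ for all $k\ge p-1$, which is exactly $\deg F \le p-2$?? No: $N(\sum c_k\binom{x}{k}) = \sum_k c_k \binom{x}{k-p+1} = \sum_{m\ge 0} c_{m+p-1}\binom{x}{m}$, so $\Ker(N) = \{F : c_{m+p-1}=0 \ \forall m\ge 0\} = \bigoplus_{k=0}^{p-2}K\binom{x}{k}$, and this equals $\Im(\delta)$? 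We have $\Im(\delta) = \{\sum_{k\ge 0}c_{k+1}\binom{x}{k}\} = K[x]$ — contradiction. Let me restate correctly: I must have mis-indexed $N$; the correct computation must yield $N\binom{x}{k} = \binom{x}{k-p+1}$ only up to the genuinely surviving terms, and reconciling $\Im\delta \subseteq \Ker N$ forces $\Ker N$ to be infinite-dimensional — which it is, since $\Ker N \supseteq \Im\delta$ has finite codimension $\le 1$. So the honest picture: $N$ has image of codimension... I will simply verify via the binomial basis that $\delta$ is surjective with one-dimensional kernel $K\cdot 1$, that $N\circ\delta=0$, that $N$ is nonzero (exhibiting one basis element with nonzero image, e.g. $N\binom{x}{p-1}=\binom{x}{0}=1\ne 0$), hence $\Ker N$ has codimension exactly $1$, forcing $\Ker N = \Im\delta$ by the inclusion already proved plus $\mathrm{codim}\,\Im\delta = 1$. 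Finally, for the splitting $K[x] = V\oplus\Im(\delta)$: since $V = \{x^{p-1}P(x^p)\}$, a monomial basis of $V$ is $\{x^{p-1}, x^{2p-1}, x^{3p-1},\dots\} = \{x^{k}: k\equiv p-1\pmod p\}$; I would show $V$ is a complement to $\Im(\delta) = \Ker(N)$ by checking $V\cap\Ker(N) = 0$ and $\dim(V|_{\le d}) + \dim(\Ker N|_{\le d}) = d+1$ for all $d$. For $V\cap\Ker N = 0$: if $x^{p-1}P(x^p)\in\Ker N$ then $N(x^{p-1}P(x^p)) = \sum_{a\in\mathbb{F}_p}(x+a)^{p-1}P((x+a)^p) = P(x^p)\sum_{a}(x+a)^{p-1}$ (using $(x+a)^p = x^p + a^p = x^p+a$, and then $\sum_a (x+a)^p$... wait $P$ is evaluated at $(x+a)^p = x^p + a$, not constant) — this needs the identity $\sum_{a\in\mathbb{F}_p}(x+a)^{p-1} = -1$, a standard fact, combined with the fact that $P((x+a)^p)$ as $a$ ranges is a permuted sum; I expect after a short manipulation that $N$ acts on $V$ as an isomorphism onto a complement, giving the result. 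This reconciliation — pinning down $\Ker N$ exactly and verifying directness of $V\oplus\Im\delta$ — is the part I expect to require the most care, and I would organize it around the two standard identities $\delta\binom{x}{k}=\binom{x}{k-1}$ and $\sum_{a\in\mathbb{F}_p}(x+a)^{p-1}=-1$ rather than brute-force coefficient bookkeeping.
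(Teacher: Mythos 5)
Your proposal gets the easy inclusion right ($N\circ\delta=T^p-\mathrm{id}=0$ since the shift by $p$ is trivial in characteristic $p$, hence $\Im(\delta)\subseteq\Ker(N)$), but the rest rests on a claim that is false in characteristic $p$: that $\delta$ has kernel the constants and image of codimension $1$. The polynomial $x^p-x$ satisfies $(x+1)^p-(x+1)=x^p-x$, so $\Ker(\delta)=K[x^p-x]$ is infinite-dimensional; dually, $\delta(x^p)=1$ shows $\delta$ drops degree by more than one on $p$-th powers, and $\delta(K[x]_{\le d})\subsetneq K[x]_{\le d-1}$ as soon as $d\ge p$. In fact the very lemma you are proving exhibits the infinite-dimensional complement $V$ of $\Im(\delta)$, so $\Im(\delta)$ cannot have codimension $1$; your binomial-basis computation of $N$ detected exactly this contradiction, but you never resolved it and your final conclusion (``$\Ker N$ has codimension exactly $1$'') is inconsistent with the statement. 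The binomial route cannot be repaired as written, because $\binom{x}{k}$ for $k\ge p$ does not exist over $K$ ($k!$ is not invertible); in the honest falling-factorial basis one has $\delta\bigl(x(x-1)\cdots(x-k+1)\bigr)=k\cdot x(x-1)\cdots(x-k+2)$, which vanishes whenever $p\mid k$ --- again the failure of surjectivity. Finally, the directness $V\cap\Ker(N)=\{0\}$, which is the real content of the lemma, is left as ``I expect after a short manipulation''; the obstruction you ran into (that $P((x+a)^p)=P(x^p+a)$ varies with $a$, so $\sum_a(x+a)^{p-1}=-1$ cannot be factored out) is precisely where an actual argument is required.

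For comparison, the paper's proof is elementary and sidesteps all of this. (i) $K[x]=V+\Im(\delta)$: given $\sum a_jx^j$, repeatedly subtract $\delta\bigl(\frac{a_m}{m+1}x^{m+1}\bigr)$ to kill the top coefficient $a_m$ with $m+1\not\equiv 0\pmod p$ --- these are exactly the monomials not in $V$, and exactly those for which $\frac{1}{m+1}$ makes sense. (ii) $V\cap\Ker(N)=\{0\}$: for $F=\sum_{i=1}^n a_ix^{ip-1}$ with $a_n\ne 0$, the coefficient of $x^{np-p}$ in $N(F)$ receives contributions only from $i=n$ and equals $a_n\binom{np-1}{np-p}\sum_{k=0}^{p-1}k^{p-1}=-a_n\binom{np-1}{np-p}\ne 0$. (iii) Your inclusion $\Im(\delta)\subseteq\Ker(N)$ together with (i) and (ii) forces $\Ker(N)=\Im(\delta)$ and the direct sum decomposition. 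To salvage your approach you would have to replace the false codimension count by a genuine computation of the leading term of $N$ on $V$, i.e.\ essentially step (ii).
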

\begin{proof}
$(1)$ We first show that $V+\Im(\delta)=\k[x]$. This is the same argument as what we did in Lemma~\ref{Lem:Fourfamilies}(iv). We take any polynomial $P(x)=\sum_{j=1}^l a_j x^j\in K[x]$. If $P\not\in V$, we can define $m$ to be the biggest integer such that $a_m\not=0$ and $m+1\not\equiv 0\pmod{p}$. Replacing $P$ with $\tilde{P}(x)=P(x)-\delta(\frac{a_m}{m+1} x^{m+1})$, we kill the coefficient of $x^m$. Continuing this process until all coefficients $a_i$ with $i+1\not\equiv 0\pmod{p}$ are zero, we obtain an element of $V$.

$(2)$ We then show that $\Ker(N)\cap V=\{0\}$. Let $F(x)=\sum_{i=1}^n a_i x^{ip-1}$ be an element of $V$, and assume that $a_n\not=0$.
Since 
$$N(F(x))=\sum_{i=1}^n a_i\left( \sum_{k=0}^{p-1} (x+k)^{ip-1}\right),$$ 
the coefficient of $x^{np-p}$ of  $N(F(x))$ is equal to 

$$a_n\left(\begin{array}{c} np-1\\ np-p\end{array}\right)\sum_{k=0}^{p-1} k^{p-1}=a_n\left(\begin{array}{c} np-1\\ np-p\end{array}\right)(p-1)\in K^*.$$
This shows that $F\notin\Ker(N)$. Hence $V\cap \Ker(N)=\{0\}$.

$(3)$ The inclusion $\Im(\delta)\subset \Ker(N)$ follows from the definition of $\delta$ and $N$, so we obtain  $V\cap \Im(\delta)=\{0\}$ by $(2)$. Moreover, since $\Ker(N)$ is contained in $\Im(\delta)\oplus V$ (by $(1)$) and $\Ker(N)\cap V=\{0\}$, we have $\Im(\delta)=\Ker(N)$.\end{proof}
\begin{remark}\label{Remark:Pnotidentity}
The equality $\Ker(N)\cap V=\{0\}$ of Lemma~\ref{Lemma:ExactSeqP} corresponds to the fact that an element of the form $$(x_1+(x_2)^{p-1}P((x_2)^p),x_2+1),$$ where $P\in K[x_2],\car(K)=p$ (Family $(iv)$) is of order $p$ if and only if $P=0$. Indeed, $$(x_1+Q(x_2),x_2+1)^p=(x_1+Q(x_2)+\dots+Q(x_2+p-1),x_2),$$ for each $Q\in K[x_2]$.
\end{remark}
We will also need the following lemma, which is a direct consequence of the amalgamated product structure of $\SAut(\A^2_K)$.
\begin{lemma}\label{Lemm:SJorSAutSame}
Let $f\in \SJ(\A^2_K)$, which is not conjugate to $(ax_1,a^{-1}x_2)$, $a\in K^*$ or to $(x_1+1,x_2)$ or $(x_1,x_2+1)$ in the group $\SJ(\A^2_K)$. Then, $f$ is conjugate to an element of $g\in \SJ(\A^2_K)$ in the group $\SAut(\A^2_K)$ if and only if it is conjugate to $g$ in $\SJ(\A^2_K)$. 
\end{lemma}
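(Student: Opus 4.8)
The plan is to exploit the amalgamated product structure $\SAut(\A^2_K)=\SAff(\A^2_K)\ast_{\SAff\cap\SJ}\SJ(\A^2_K)$ given by the Jung--van der Kulk theorem. The ``if'' direction is trivial: a conjugation inside $\SJ(\A^2_K)$ is a conjugation inside $\SAut(\A^2_K)$. So the content is the ``only if'' direction: if $\alpha f\alpha^{-1}=g$ with $f,g\in\SJ(\A^2_K)$ and $\alpha\in\SAut(\A^2_K)$, then $f$ and $g$ are already conjugate inside $\SJ(\A^2_K)$. First I would recall the standard fact from Bass--Serre theory (or direct combinatorial manipulation of reduced words in an amalgam $G=A\ast_C B$): if two elements of a factor $B$ are conjugate in $G$, then there is a sequence of ``elementary'' conjugations, each by an element of $A$ or $B$, passing through elements that all lie in $B$ — \emph{unless} the element being conjugated lies in a conjugate of $C=\SAff(\A^2_K)\cap\SJ(\A^2_K)$. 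Concretely, one writes $\alpha=w_k\cdots w_1$ as a reduced word alternating between $\SAff\setminus\SJ$ and $\SJ\setminus\SAff$ and argues by induction on $k$ that, because $\alpha f\alpha^{-1}\in\SJ(\A^2_K)$, the word can be shortened, at each step replacing $f$ by a $\SJ$-conjugate, provided $f$ is not $\SAut$-conjugate to an element of $\SAff\cap\SJ$.

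Next I would identify exactly which elements of $\SJ(\A^2_K)$ are conjugate (in $\SAut$, equivalently in $\SJ$ by the amalgam normal form for elliptic elements) to an element of $\SAff\cap\SJ=\{(ax_1+bx_2+e,a^{-1}x_2+f)\}$. These are precisely the elements whose $\SJ$-normal form from Lemma~\ref{Lem:Fourfamilies} has $P$ of degree $\le 1$: namely $(ax_1,a^{-1}x_2)$ with $a\in K^*$ (Family $(i)$ together with $a=1$), and the translations $(x_1+1,x_2)$ and $(x_1,x_2+1)$ (the $P=0$, resp.\ linear, cases of Families $(ii)$ and $(iv)$). Every other element of $\SJ(\A^2_K)$ — in particular those of Families $(iii)$, $(iv)$ with $P\ne 0$, and $(ii)$ with $\deg P\ge 2$ — has a representative that does not lie in any conjugate of $\SAff(\A^2_K)$, because conjugating such an element by any word of positive length in the amalgam strictly increases its degree (one can see this via Lemma~\ref{Lem:ConditionDegree} and the fact $X_j=I_j=[0:1:0]$ for $j\in\SJ\setminus\SAff$, exactly as in Remark~\ref{Rem:Henon}). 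This is the step where the three exceptional cases of the lemma's hypothesis enter, and it is essentially a bookkeeping argument once the normal form is in hand.

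Combining the two pieces: given $g=\alpha f\alpha^{-1}$ with $f$ not among the three excluded types, the element $f$ is not conjugate into $\SAff\cap\SJ$, so the Bass--Serre shortening procedure applies without obstruction and terminates with $f$ conjugated, by a finite chain of $\SJ$-conjugations (the $\SAff$-conjugations in the chain can be shown to be harmless or absorbed, since at the relevant intermediate stages the conjugating element must normalize the $\SJ$-structure), to $g\in\SJ(\A^2_K)$; hence $f$ and $g$ are $\SJ$-conjugate. I would present this either by directly invoking the classical statement about conjugacy of elliptic elements in amalgamated free products (found e.g.\ in Serre's \emph{Trees}) or, for self-containedness matching the style of the paper, by the explicit word-reduction argument already used in the proof of Lemma~\ref{Lem:Henon}.

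The main obstacle I anticipate is making the word-reduction genuinely rigorous: when one conjugates $f\in\SJ$ by the outermost letter $w_k$ of $\alpha$, one must argue that the result can only lie back in $\SJ$ (so that the induction hypothesis applies to a shorter word) precisely because $f$ avoids the conjugates of $\SAff\cap\SJ$; the degenerate possibility that a cancellation occurs ``in the middle'' is exactly what the hypothesis rules out, and one must check that the intermediate conjugation indeed stays within $\SJ$ rather than merely within some conjugate of it. Handling the $\SAff$-letters in the chain — showing a conjugation by $a\in\SAff\setminus\SJ$ that sends an element of $\SJ$ into $\SJ$ forces $a$ to preserve the pencil through $[0:1:0]$, hence $a\in\SAff\cap\SJ$ up to the allowed ambiguity — is the delicate point, but it is the same mechanism as in Lemma~\ref{Lem:Henon} and should go through cleanly.
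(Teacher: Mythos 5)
Your overall strategy --- word reduction in the amalgam $\SAff(\A^2_K)\ast_{C}\SJ(\A^2_K)$ with $C=\SAff(\A^2_K)\cap\SJ(\A^2_K)$, locating the obstruction at elements conjugate into $C$ --- is the same family of ideas as the paper's proof, but your second step contains a genuine error that leaves a hole exactly where the lemma is later used. You claim that the elements of $\SJ(\A^2_K)$ conjugate into $C$ are precisely the three excluded types. This is false: for instance $(-x_1+cx_2,-x_2)$ with $c\neq 0$ and $\car(K)\neq 2$ lies in $C$ itself, yet it belongs to Family $(iii)$ of Lemma~\ref{Lem:Fourfamilies} (with $\zeta=-1$, $m=2$, $P=c$ constant) and is not $\SJ$- or $\SAut$-conjugate to any $(ax_1,a^{-1}x_2)$, since its square is $(x_1-2cx_2,x_2)\neq\mathrm{id}$; in characteristic $2$ the element $(x_1+x_2,x_2+1)$ of Family $(iv)$ is a similar counterexample. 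Your degree-growth justification ("conjugating by any word of positive length strictly increases the degree") simply does not apply to these elements, which already have degree $1$. Consequently the conclusion "the shortening procedure applies without obstruction" fails for a nonempty class of $f$ satisfying the hypothesis, and these are not marginal cases: Proposition~\ref{Prop:ConjclassesSJ} invokes the lemma for all of Family $(iii)$, including these.

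The missing idea is that the word reduction forces strictly more than "$f$ is $\SJ$-conjugate into $C$": writing $\alpha=j_{m+1}a_m\cdots a_1j_1$ in reduced form, the cancellation needed for $\alpha f\alpha^{-1}$ to lie in a factor forces \emph{both} $f_1=j_1f j_1^{-1}$ \emph{and} $a_1f_1a_1^{-1}$ to lie in $C$, with $a_1\in\SAff(\A^2_K)\setminus\SJ(\A^2_K)$. Membership in $C$ means fixing the point $[0:1:0]$ at infinity, and $a_1$ moves that point, so $f_1$ must fix two distinct points of $H_\infty$; after conjugating inside $C$ one gets $f_1=(ax_1+b,a^{-1}x_2+c)$ with no $x_2$-term, and such elements --- unlike general elements of $C$ --- really are $\SJ$-conjugate to one of the three excluded types. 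That extra geometric step is what eliminates the problematic elements above, and it is absent from your argument.
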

\begin{proof}
We suppose that $g=\alpha f \alpha^{-1}\in \SJ(\A^2_K)$, for some $\alpha\in \SAut(\A^2_K)\setminus\SJ(\A^2_K)$. We can write $\alpha$ as $$\alpha=j_{m+1}a_{m}j_m\dots j_2a_1j_1,$$ where $m\ge 1$, each $j_i\in \SJ(\A^2_K)$, each $a_i\in \SAff(\A^2_K)$, and such that $a_i\not\in \SJ(\A^2_K)$ for $i=1,\dots,m$, $j_i\notin\SAff(\A^2_K)$ for $i=2,\dots,m-1$.
Since $\alpha f \alpha^{-1}\in \SJ(\A^2_K)$, the element $j_1 f (j_1)^{-1}$ belongs to $\SAff(\A^2_K)\cap \SJ(\A^2_K)$, and the same holds for the element  $a_1 j_1 f (j_1)^{-1} (a_1)^{-1}$.

The fact that $f_1=j_1 f (j_1)^{-1}$ and $f_2=a_1 j_1 f (j_1)^{-1} (a_1)^{-1}$ belong to  $\SAff(\A^2_K)\cap \SJ(\A^2_K)$ corresponds to the fact that both extend to automorphisms of $\p^2_K$ that fix the point $[0:1:0]$ at infinity. However, $a_1\in \SAff(\A^2_K)\setminus \SJ(\A^2_K)$ so it extends to an automorphism of $\p^2_K$, whose action at the infinity moves the point $[0:1:0]$. This implies that $f_1$ fixes at least two points at infinity. Conjugating by an element of $\SAff(\A^2_K)\cap \SJ(\A^2_K)$ we can assume that these points are $[0:1:0]$ and $[0:0:1]$. This implies that $f_1=(ax_1+b,a^{-1}x_2+c)$, for some $(a,b,c)\in K^*\times K^2$. It remains to see that $f_1$ is conjugate to  $(ax_1,a^{-1}x_2)$, $(x_1+1,x_2)$ or $(x_1,x_2+1)$, in the group $\SJ(\A^2_K)$. This is a straight-forward calculation, already done in the proof of Lemma~\ref{Lem:Fourfamilies}.
\end{proof}

\begin{proposition}[Conjugacy classes of algebraic elements in $\SAut(\A^2_K)$]\label{Prop:ConjclassesSJ}Every algebraic element of $\SAut(\A^2_K)$ is conjugate to one of the following families $(i)$--$(iv)$ of Lemma~$\ref{Lem:Fourfamilies}$.

Apart from $(x_1,x_2+1)$, which is conjugate to $(x_1+1,x_2)$, no two elements of distinct families are conjugate in $\SAut(\A^2_K)$. Moreover, the conjugacy classes in $\SAut(\A^2_K)$ among the families are given as follows:

$(i)$ $(ax_1,a^{-1}x_1)\sim (bx_1,b^{-1}x_2)$  $\Leftrightarrow$ $a=b^{\pm 1}$.

$(ii)$ $(x_1+P(x_2),x_2)\sim (x_1+Q(x_2),x_2)$ $\Leftrightarrow$ $Q(x_2)=aP(ax_2+b)$ for some $(a,b)\in K^*\times K.$

$(iii)$ $(\zeta x_1+(x_2)^{m-1}P((x_2)^m),\zeta^{-1}x_2)$ is conjugate to $(\zeta x_1+(x_2)^{m-1}a^mP((ax_2)^m),\zeta^{-1}x_2)$ for each $a\in K^*$, but not to any other element of Family $(iii)$.

$(iv)$ Two elements $$f=(x_1+(x_2)^{p-1}P((x_2)^p),x_2+1),\ g=(x_1+(x_2)^{p-1}Q((x_2)^p),x_2+1),$$ where $P,Q\in K[x_2]$, are conjugate if and only their $p$-th powers
$$f^p=(x_1+\tilde{P}(x_2),x_2), g^p=(x_1+\tilde{Q}(x_2),x_2),$$
where $\tilde{P},\tilde{Q}\in K[x_2]$, satisfy $\tilde{Q}(x_2)=\tilde{P}(x_2+c)$ for some $c\in K$.
\end{proposition}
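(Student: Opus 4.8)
The first assertion is immediate: by Remark~\ref{Rem:AlgebraicSJ} every algebraic element is conjugate to an element of $\SJ(\A^2_K)$, and Lemma~\ref{Lem:Fourfamilies} then puts it, after a further conjugation inside $\SJ(\A^2_K)$, into one of the four families. The coincidence $(x_1,x_2+1)\sim(x_1+1,x_2)$ is a one-line computation: conjugating by the linear map $(x_2,-x_1)\in\SAff(\A^2_K)$ turns one into the other. For everything else the plan is to reduce $\SAut(\A^2_K)$-conjugacy among the normal forms to $\SJ(\A^2_K)$-conjugacy by invoking Lemma~\ref{Lemm:SJorSAutSame}; since that reduction is only valid for elements not $\SJ$-conjugate to $(ax_1,a^{-1}x_2)$, $(x_1+1,x_2)$ or $(x_1,x_2+1)$, I would first handle these exceptional elements by hand and then check that no other normal form is $\SJ$-conjugate to one of them.

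For the diagonal elements, if $a\neq 1$ then $(ax_1,a^{-1}x_2)$ has the origin as its only fixed point in $\A^2$, so any conjugator between two such maps fixes the origin; passing to linear parts (a homomorphism on the stabiliser of the origin) reduces the problem to conjugacy of $\mathrm{diag}(a,a^{-1})$ and $\mathrm{diag}(b,b^{-1})$ in $\GL(2,K)$, giving $a=b^{\pm 1}$, the sign being realised again by $(x_2,-x_1)$. The elements $(x_1+c,x_2)$ with $c\neq 0$ are all mutually conjugate by diagonal linear maps and have empty fixed locus. To see that these, and $(x_1,x_2+1)$, are conjugate to nothing outside the orbit just described, I would use two coarse invariants of $\SAut(\A^2_K)$-conjugacy: the dimension over $\overline K$ of the fixed-point set, and, in characteristic $p$, the order of the element.

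These two invariants together with diagonalisability also separate the four families. Family $(i)$ (with $a\neq 1$) consists of diagonalisable elements, and no element of the other families is diagonalisable (except the identity, the $P=0$ case of Family~$(ii)$): for $f$ in Family $(iii)$ one computes $f^m=(x_1+m\zeta^{-1}(x_2)^{m-1}P((x_2)^m),x_2)$, a non-trivial element of Family $(ii)$ since $\car(K)\nmid m$ (needed for a primitive $m$-th root of unity to exist) and $P\neq 0$; for Family $(iv)$ one has $f^p=(x_1+N((x_2)^{p-1}P((x_2)^p))(x_2),x_2)$, non-trivial for $P\neq 0$ by Lemma~\ref{Lemma:ExactSeqP}, equivalently Remark~\ref{Remark:Pnotidentity}; and a non-identity element of Family $(ii)$ is never diagonalisable, its fixed locus over $\overline K$ being a line or empty but never a single point, while powers of diagonalisable elements are diagonalisable. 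The remaining distinctions use the coarse invariants: Family $(ii)$ with $\deg P\ge 1$ has a one-dimensional fixed locus, Family $(iii)$ has the origin as unique fixed point (since $P(0)=0$), and Family $(iv)$ together with the elements $(x_1+c,x_2)$ has empty fixed locus, these last being separated in characteristic $p$ by their orders ($(x_1+c,x_2)$ has order $p$, a Family $(iv)$ element with $P\neq 0$ has order $p^2$) and all collapsing into the exceptional orbit in characteristic $0$. Assembling this gives exactly the claimed list of coincidences.

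It remains to describe conjugacy inside each family, which by Lemma~\ref{Lemm:SJorSAutSame} (applicable by the previous paragraph) is $\SJ(\A^2_K)$-conjugacy. Conjugating a Family $(ii)$ element $(x_1+P(x_2),x_2)$ by $(bx_1+R(x_2),b^{-1}x_2+c)\in\SJ(\A^2_K)$ gives $(x_1+bP(bx_2-bc),x_2)$ (the $R$-part drops out), which is exactly the relation $Q(x_2)=aP(ax_2+b)$. For Family $(iii)$, demanding that conjugation preserve the normal form forces the conjugator's base action to be a scaling and its polynomial part to be inert, leaving conjugation by $(ax_1,a^{-1}x_2)$, hence $P(y)\mapsto a^mP(a^my)$. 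The delicate case is Family $(iv)$ in characteristic $p$: there $f=(x_1+Q(x_2),x_2+1)$ with $Q\in V$ and $f^p=(x_1+N(Q)(x_2),x_2)$, and conjugating $f$ by $(x_1+R(x_2),x_2+c)\in\SJ(\A^2_K)$ replaces $Q$ by the $V$-component of $Q(x_2-c)$ relative to the decomposition $K[x_2]=V\oplus\Im(\delta)$ of Lemma~\ref{Lemma:ExactSeqP}; applying $N$ and using $\Im(\delta)=\Ker(N)$ turns this into $\tilde Q(x_2)=\tilde P(x_2-c)$, while injectivity of $N$ on $V$ shows the condition is also sufficient. The hard part is precisely this last matching between the $\SJ$-conjugation action on $Q$ and the translation action on $\tilde P=N(Q)$, via the decomposition of Lemma~\ref{Lemma:ExactSeqP}; everything else is routine bookkeeping with the coarse invariants above.
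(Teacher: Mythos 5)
Your proof is correct and follows essentially the same route as the paper: reduction to the four normal forms, separation of the families by conjugacy invariants, reduction of $\SAut(\A^2_K)$-conjugacy to $\SJ(\A^2_K)$-conjugacy via Lemma~\ref{Lemm:SJorSAutSame}, and explicit computation of the $\SJ$-conjugation action within each family, with Lemma~\ref{Lemma:ExactSeqP} doing the work for Family $(iv)$. The only (harmless) divergence is in the choice of invariants separating the families: you use the dimension of the fixed locus over $\overline{K}$ and the order of the element, where the paper linearises at a fixed point and looks at eigenvalues of the action of $f^{*}$ on $K[x_1,x_2]$.
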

\begin{proof}The first assertion follows from Lemma~\ref{Lem:Henon} (see Remark~\ref{Rem:AlgebraicSJ}). It remains to describe the conjugacy classes between the families $(i)$--$(iv)$.
We first observe that if $\alpha\in \SL(2,K)$ is conjugate by $\nu\in \SAut(\A^2_K)$ to an element $\beta\in \SAut(\A^2_K)$ that fix the origin, the derivative of the equation $\alpha\nu =\nu\beta$ at the origin yields $\alpha D_\nu(0)=D_\nu(0)D_\beta(0)$, so the derivative of $\beta$ at the origin is conjugated to $\alpha$ in $\SL(2,K)$. This shows that the families $(i)$ and $(ii)$ are disjoint, and gives the conjugacy classes among $(i)$.

We now observe that two elements $f=(x_1+P(x_2),x_2)$ and $(x_1+Q(x_2),x_2)$ are conjugate in $\SAut(\A^2_K)$ if and only if they are conjugate in $\SJ(\A^2_K)$. This is given by Lemma~\ref{Lemm:SJorSAutSame}, if $f$ or $g$ is not conjugate to $(x_1+1,x_2)$ or $(x_1,x_2)$ in $\SJ(\A^2_K)$, and is trivial in the remaining cases (when both $f$ and $g$ are conjugate to one of these two elements). It remains to observe that the conjugation of $f=(x_1+P(x_2),x_2)$ by $(ax_1+R(x_2),a^{-1}(x_2-b))$ yields $(x_1+aP(ax_2+b),x_2)$ to obtain the conjugacy classes among~$(ii)$.

An element $f=(\zeta x_1+(x_2)^{m-1}P((x_2)^m),\zeta^{-1}x_2)$ in Family $(iii)$ satisfies $f^m=(x_1+m(x_2)^{m-1}P((x_2)^m),x_2)\not=(x_1,x_2)$. Since $f^m$ belongs to Family $(ii)$, $f^m$ (and thus $f$) is not conjugate to an element of Family $(i)$. Moreover, $f$ is not conjugate to an element of Family $(ii)$ because $\zeta^{-1}\in K\setminus \{0,1\}$ is an eigenvalue of the action of $f^*$ on $K[x_1,x_2]$. By Lemma~\ref{Lemm:SJorSAutSame}, $f$ is conjugate to another element $g$ of Family $(iii)$ in $\SAut(\A^2_K)$ if and only if this conjugation holds in $\SJ(\A^2_K)$. Looking at the second coordinate, we have to conjugate by $\alpha=(ax_1+R(x_2),a^{-1}x_2)$. This implies that 
$g=(\zeta x_1+(x_2)^{m-1}Q((x_2)^m),\zeta^{-1}x_2)$ for some polynomial $Q$. Looking at $g^m=\alpha f^m \alpha^{-1}=(x_1+ma^m(x_2)^{m-1}P((ax_2)^m),x_2),$
we obtain $Q((x_2)^m)=a^m P((ax_2)^m)$. This yields a necessary condition on $Q$, which is also sufficient, by choosing $R=0$.

It remains to study the last class $(iv)$. The element  $(x_1,x_2+1)$ is conjugate to $(x_1+1,x_2)$ by $(x_2,-x_1)$. If $\car(K)=0$, there is no other element in the family. So we assume that $\car(K)=p>0$, and consider $f=(x_1+(x_2)^{p-1}P((x_2)^p),x_2+1)$, for some polynomial $P\in K[x_2]\setminus \{0\}$. Because the action of $f^*$ on $K[x_1,x_2]$ has only eigenvalues equal to $1$, $f$ is not conjugate to any element of Family $(i)$ or $(iii)$. Moreover, the fact that $P\not=0$ implies that $f^p$ is not the identity (see Remark~\ref{Remark:Pnotidentity}). Hence, $f$ is not conjugate to an element of Family $(ii)$. 

We then take $g=(x_1+(x_2)^{p-1}Q((x_2)^p),x_2+1)$, for some $Q\in K[x_2]$, write 
$$f^p=(x_1+\tilde{P}(x_2),x_2), g^p=(x_1+\tilde{Q}(x_2),x_2),$$
for some $\tilde{P},\tilde{Q}\in K[x_2]$, and show that $f$ and $g$ are conjugate if and only if $\tilde{Q}(x_2)=\tilde{P}(x_2+c)$ for some $c\in K$. This will conclude the proof.

Suppose first that $f$ is conjugate to $g$. Lemma~\ref{Lemm:SJorSAutSame} yields the existence of $\alpha\in \SJ(\A^2_K)$ such that $\alpha f\alpha^{-1}=g$. Looking at the second coordinate, we see that $\alpha=(x_1+R(x_2),x_2-c)$ for some $R\in K[x_2],c\in K$. Then,
$\alpha f^p \alpha^{-1}=(x_1+\tilde{P}(x_2+c),x_2)$, which implies that $\tilde{Q}(x_2)=\tilde{P}(x_2+c)$ as we wanted.

Conversely, suppose that $\tilde{Q}(x_2)=\tilde{P}(x_2+c)$ for some $c\in K$. We conjugate then $g$ with $(x_1,x_2+c)$; this changes maybe the polynomial $Q$, and replaces $\tilde{Q}$ with $\tilde{P}$. Hence, we can assume that $f^p=g^p$. This corresponds to the equality $$N((x_2)^{p-1}P((x_2)^p))=N((x_2)^{p-1}Q((x_2)^p)),$$
where $N\colon K[x_2]\to K[x_2]$ is the $K$-linear map that sends $f(x_2)$ onto $f(x_2)+f(x_2+1)+\dots +f(x_2+p-1)$. The element $(x_2)^{p-1}Q((x_2)^p)-(x_2)^{p-1}P((x_2)^p)$ is an element of the kernel of $N$, and is thus equal to $R(x_2+1)-R(x_2)$ for some polynomial $R\in K[x_2]$  (Lemma~\ref{Lemma:ExactSeqP}). This corresponds to the fact that $f$ is conjugate to $g$ by $\alpha=(x_1+R(x_2),x_2)$.
\end{proof}

\subsection{Degenerations between the four families of conjugacy classes}
In the remaining part of the article, we show that every algebraic element of $\SAut(\A^2_K)$ admits a degeneration of conjugates towards a diagonalisable element.

\subsubsection{From Family $(ii)$ and $(iii)$ to Family $(i)$}\label{SubSecFamilyiiandiiitoi}
These are the easiest degenerations. Taking any $f\in \SAut(\A^2_K)$ that belongs to Family $(ii)$ or $(iii)$, and conjugating by diagonal elements of $\SAut(\A^2_{K(t)})$ we obtain elements of $\SAut(\A^2_{K[t]})$ which yield families of conjugates of $f$  that converge towards an element of Family $(i)$. Indeed, we have 
$$\begin{array}{rcl}
(tx_1,t^{-1}x_2)(x_1+P(x_2),x_2)(t^{-1}x_1,tx_2)&=&(x_1+tP(x_2),x_2),\\
(tx_1,t^{-1}x_2)f(t^{-1}x_1,tx_2)&=&(\zeta x_1+t^{m}(x_2)^{m-1}P(t(x_2)^m),\zeta^{-1}x_2).\end{array}$$
In particular, there is a diagonalisable element in the closure of the conjugacy class of any element of Families $(ii)$ and $(iii)$, and thus of any algebraic element of $\SAut(\A^2_K)$, if $\car(K)=0$.
\subsubsection{Family $(iv)$}
The last family, when $\car(K)=p>0$, is the most interesting. The elements of this family are of the form $(x_1+Q(x_2),x_2+1)$, for some polynomial $Q\in K[x_2]$. Diagonal conjugations by $(tx_1,t^{-1}x_2)$ or $(t^{-1}x_1,tx_2)$ yield elements of $\SAut(\A^2_{K[t^{\pm1}]})$ which do not have a value at $t=0$.

In the following proposition, we provide two explicit degenerations, typical to positive characteristic, to either $(x_1,x_2+1)$ or the identity.
\begin{proposition}\label{Prop:IVtoI}
Assume that $\car(K)=p>0$ and let 
$$f=(x_1+Q(x_2),x_2+1)$$
for some polynomial $Q\in K[x_2]$.

Then, there are two elements $F_1,F_2\in \SAut(\A^2_{K[t]})$, conjugate to $f$ in $\SAut(\A^2_{K[t^{\pm 1}]})$, which have the following properties.
\begin{enumerate}[$(1)$]
\item
For each $t\in K\setminus \{0\}$, $F_1(t),F_2(t)\in \SAut(\A^2_K)$ are conjugate to $f$ in $\SAut(\A^2_K)$.
\item
$F_1(0)=(x_1,x_2+1)$, $F_2(0)=(x_1,x_2).$\end{enumerate}
\end{proposition}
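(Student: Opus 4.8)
I would produce each $F_i$ in the form $F_i=g_i\,f\,g_i^{-1}$ for an explicit family $g_i\in\SAut(\A^2_{K[t^{\pm1}]})$, and then verify three things: that $g_i$ indeed lies in $\SAut(\A^2_{K[t^{\pm1}]})$, that $F_i=g_if g_i^{-1}$ has no pole at $t=0$ (so $F_i\in\SAut(\A^2_{K[t]})$), and that $F_i(0)$ is the announced element. Once this is done, property $(1)$ is automatic: for $t\ne0$ the specialization $g_i(t)$ lies in $\SAut(\A^2_K)$, whence $F_i(t)=g_i(t)\,f\,g_i(t)^{-1}$ is conjugate to $f$ in $\SAut(\A^2_K)$. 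So the whole content is to exhibit $g_1,g_2$ and to compute the two values at $t=0$. I would first record two reductions. Conjugating $f$ by the triangular automorphism $(x_1+R(x_2),x_2)$ replaces $Q$ by $Q+\delta R$ (notation of Lemma~\ref{Lemma:ExactSeqP}); hence after such a conjugation one may assume $Q$ lies in the complement $V=\{x_2^{p-1}P(x_2^p)\}$ of $\Im(\delta)$ from that lemma, in which case $N(Q)=0$ iff $Q=0$. And if $Q=0$, i.e.\ $f=(x_1,x_2+1)$, everything is trivial: take $F_1=f$ and $F_2=g_2f g_2^{-1}$ with $g_2$ the composite of $\sigma=(x_2,-x_1)$ and the diagonal $(tx_1,t^{-1}x_2)$, giving $F_2=(x_1+t,x_2)$ with $F_2(0)=(x_1,x_2)$. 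So assume from now on $Q\ne0$, i.e.\ $N(Q)\ne0$; note then $\deg N(Q)<\deg Q$, so that $f^p=(x_1+N(Q)(x_2),x_2)$ (which lies in Family $(ii)$) is strictly ``simpler'' than $f$.

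\textbf{Construction of $F_2$ (limit the identity).} Conjugating $f$ by $(tx_1,t^{-1}x_2)$ gives $(x_1+tQ(tx_2),\,x_2+t^{-1})$, whose first coordinate is harmless but whose second has a pole; conjugating by $(t^{-1}x_1,tx_2)$ gives $(x_1+t^{-1}Q(t^{-1}x_2),\,x_2+t)$, now with a harmless second coordinate but a pole $t^{-1}Q(t^{-1}x_2)$ in the first. I would then compose with a triangular automorphism $(x_1+R(x_2,t),x_2)$, $R\in K[t^{\pm1}][x_2]$, which changes the first coordinate by $R(x_2+t,t)-R(x_2,t)$; one builds $R$ by descending induction on the $x_2$-degree, which closes because this difference operator lowers the $x_2$-degree by one with leading coefficient a nonzero multiple of $t$, the only obstruction being degrees $\equiv-1\pmod p$, i.e.\ the space $V$. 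For a $Q$ reduced modulo $\Im(\delta)$ this residual $V$-part is exactly the one place where a genuinely positive-characteristic input is needed: one uses that $f^p=(x_1+N(Q)(x_2),x_2)$ has strictly smaller degree and therefore degenerates to the identity as in \S\ref{SubSecFamilyiiandiiitoi}, splitting the conjugating family accordingly (one may have to iterate on $\deg Q$). The upshot is $F_2\in\SAut(\A^2_{K[t]})$ with $F_2(0)=(x_1,x_2)$.

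\textbf{Construction of $F_1$ (limit $(x_1,x_2+1)$).} By Proposition~\ref{Prop:ConjclassesSJ}$(iv)$ the conjugacy invariant of $f$ is $N(Q)$ up to translation, here nonzero, whereas the invariant of $(x_1,x_2+1)$ is $0$; hence $F_1(t)$ cannot be of Family-$(iv)$ shape $(x_1+Q_t(x_2),x_2+1)$ as $t\to0$, and $g_1$ must have degree $\ge2$, the family ``collapsing'' in degree at $t=0$. The structural idea is to write $f=\tau\circ h_0$ with $\tau=(x_1,x_2+1)$ of order $p$ and $h_0=(x_1+Q(x_2),x_2)$ of order $p$ in Family $(ii)$, and to choose $g_1$ contracting $h_0$ (and $\sigma h_0\sigma^{-1}$) to the identity while perturbing $\tau$ only by a pole-free amount, so that $F_1(0)=\tau$. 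Equivalently, since $g_1f^pg_1^{-1}=(g_1fg_1^{-1})^p\to\mathrm{id}$, the value $F_1(0)$ has order dividing $p$, hence (using Remark~\ref{Remark:Pnotidentity} and the classification of algebraic elements) is the identity, a conjugate of $(x_1,x_2+1)$, or an element of Family $(ii)$; the construction of $g_1$ rules out the identity and Family $(ii)$ and forces $F_1(0)$ into the conjugacy class of $(x_1,x_2+1)$, after which a final conjugation of $g_1$ by a constant element of $\SAut(\A^2_K)$ normalizes $F_1(0)$ to exactly $(x_1,x_2+1)$.

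\textbf{Main obstacle.} The real difficulty is the pole-cancellation when $N(Q)\ne0$: any diagonal contraction introduces a pole (a $t^{-1}$ inside a translation, or a $t^{-1}Q(t^{-1}\cdot)$ term), and triangular corrections remove only the part of that pole lying in $\Im(\delta)$; the part lying in $V$ is precisely the conjugacy invariant $N(Q)$, which cannot be removed while staying among conjugates of Family-$(iv)$ shape. Overcoming this, and moreover producing \emph{two} different families limiting to the two distinct elements $(x_1,x_2+1)$ and $\mathrm{id}$, is exactly where one must exploit the characteristic-$p$ phenomena — that $N$ lowers degrees so $f^p$ is strictly simpler and contracts to the identity, the rigid classification of order-$p$ elements of $\SAut(\A^2_K)$, and the decomposition $K[x_2]=V\oplus\Im(\delta)$ of Lemma~\ref{Lemma:ExactSeqP}. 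Writing the two conjugating families explicitly and checking by direct computation that the specializations at $t=0$ are the asserted ones is then the bulk of the remaining work.
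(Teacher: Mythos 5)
There is a genuine gap: you correctly isolate the hard point --- after any diagonal contraction the pole in the first coordinate has a residual part supported on $V=\{x_2^{p-1}P(x_2^p)\}$, which triangular conjugations $(x_1+R(x_2),x_2)$ cannot remove because the difference operator $R\mapsto R(x_2+t)-R(x_2)$ misses exactly the degrees $\equiv -1\pmod p$ --- but you never actually overcome it. For $F_2$ the proposed fix (``use that $f^p=(x_1+N(Q)(x_2),x_2)$ has smaller degree and degenerates to the identity, splitting the conjugating family, iterating on $\deg Q$'') is not an argument: a degeneration of $f^p$ does not produce a degeneration of $f$, and no mechanism is given for turning one into the other. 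For $F_1$ the situation is worse: you describe properties that a hypothetical $g_1$ should have (``contracting $h_0$ to the identity while perturbing $\tau$ by a pole-free amount'') and then deduce $F_1(0)=(x_1,x_2+1)$ from ``the construction of $g_1$'', but no $g_1$ is constructed. The order-$p$ argument via $g_1f^pg_1^{-1}\to\mathrm{id}$ likewise presupposes the limit you are trying to establish.

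The missing idea, which is the whole content of the paper's proof, is a single explicit conjugating element that is neither diagonal nor triangular: with $d=\deg Q$, $\mu$ the leading coefficient, $q=p^a>d$ and $\lambda=\mu^{-q}$, one conjugates by
$$\alpha=\left(\frac{x_1}{t^d},\ t^dx_2+\lambda x_1^{q}+\frac{1}{t}\right)\in \SAut(\A^2_{K[t,t^{-1}]}).$$
Writing $Q(t^dx_2+\lambda x_1^q+\tfrac1t)=\tfrac{\mu}{t^d}+\tfrac{P}{t^{d-1}}$, the first coordinate of $\alpha^{-1}f\alpha$ becomes $x_1+\mu+tP$, and the apparent pole $\tfrac{1}{t^d}-\tfrac{\lambda}{t^d}(\mu+tP)^q$ in the second coordinate cancels \emph{exactly} because $q$ is a power of $p$, so $(\mu+tP)^q=\mu^q+t^qP^q$ and $\lambda\mu^q=1$; what remains is $-\lambda t^{q-d}P^q$, regular at $t=0$ since $q>d$. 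The limit is the translation $(x_1+\mu,x_2)$, which a constant linear conjugation turns into $(x_1,x_2+1)$, giving $F_1$; and $F_2$ is obtained by replacing $t$ by $t^m$ in $\alpha$ and composing with a further diagonal conjugation, with $m$ large enough to absorb the resulting poles. This Frobenius cancellation is precisely the ``characteristic-$p$ input'' your sketch appeals to but does not supply, so the proof is not complete as written.
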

\begin{proof}If $Q=0$, we can choose $F_1(t)=f$ and $F_2(t)$ to be the conjugation of $f$ by $(t^{-1}x_1,tx_2)\in \SAut(\A^2_{K[t^{\pm 1}]})$.

We can thus assume that $Q\not=0$, denote by $d\ge 0$ the degree of $Q$ and by $\mu\in K^{*}$ the coefficient of degree $d$. We then choose some integer $a\ge 1$ such that $q=p^a>d$, write $\lambda=\frac{1}{\mu^q}\in K^*$, and define
$$\alpha=\left(\frac{x_1}{t^d},t^dx_2+\lambda x_1^{q}+\frac{1}{t}\right)\in \SAut(\A^2_{K[t,t^{-1}]}).$$
A direct computation yields
$$\alpha^{-1} f\alpha=
\left(x_1+t^dQ(t^dx_2+\lambda x_1^q+\frac{1}{t}),x_2+\frac{1}{t^d}+\frac{\lambda x_1^q}{t^d}-\frac{\lambda}{t^d}\left(x_1+t^dQ(t^dx_2+\lambda x_1^q+\frac{1}{t})\right)^q\right).$$
The definition of $d$ and $\mu$ imply that we can write
$$Q(t^dx_2+x_1^q+\frac{1}{t})=\frac{\mu}{t^d}+\frac{P}{t^{d-1}},$$ for some $P\in K[x_1,x_2,t]$. This yields (remembering that $\lambda\mu^q=1$), the following equality
$$\alpha^{-1} f\alpha=\left(x_1+\mu+tP,x_2+\frac{1}{t^d}-\frac{\lambda}{t^d}(\mu +tP)^q\right)=\left(x_1+\mu+tP,x_2-\lambda t^{q-d}P^q\right).$$
Because $q>d$, the value of this element of $\SAut(\A^2_{K[t]})$ at $t=0$ is $\left(x_1+\mu,x_2\right).$ Conjugating $\alpha^{-1} f\alpha$ with $(-\mu x_2,\mu^{-1}x_1)$ yields thus $F_1$.

In order to get $F_2$, we recall that $(t x_1,t^{-1} x_2)\left(x_1+\mu,x_2\right)(t^{-1} x_1,t x_2)=(x_1+t\mu,x_2).$ We are then tempted to use $(t x_1,t^{-1} x_2)\alpha^{-1} f\alpha(t^{-1} x_1,t x_2),$ but this element has in general no value at $t=0$. We then choose $m>0$ and define $\beta$ to be
$$\beta=\left(\frac{x_1}{t^{md}},t^{md}x_2+\lambda x_1^{q}+\frac{1}{t^m}\right)\in \SAut(\A^2_{K[t,t^{-1}]}).$$
Since $\beta$ is obtain from $\alpha$ by replacing $t$ with $t^m$, we obtain 
$$\beta^{-1} f\beta=\left(x_1+\mu+t^mP(x_1,x_2,t^m),x_2-\lambda t^{m(q-d)}P(x_1,x_2,t^m)^q\right).$$
We can now define $F_2(t)=(t x_1,t^{-1} x_2)\beta^{-1} f\beta(t^{-1} x_1,t x_2),$ which is equal to
$$F_2(t)=\left(x_1+t\mu+t^{m+1}P(t^{-1}x_1,tx_2,t^m),x_2-\lambda t^{m(q-d)-1}P(t^{-1}x_1,tx_2,t^m)^q\right).$$
Choosing $m$ big enough, $F_2$ is defined at $t=0$ and $F_2(t)=(x_1,x_2)$.
\end{proof}\subsection{The diagonalisable elements}
In order to finish the proof of Theorem~$\ref{Thm:Dimension2}$ it remains to show that the conjugacy classes of diagonalisable elements is closed. This was shown in \cite{MauFur}, for the group $\Aut(\A^2_\C)$, but with transcendental methods. In the case of $\SAut(\A^2_\k)$, we can however give a simple proof, that works for any algebraically closed field $\k$.

The proof use the following result, that follows from the amalgamated structure product.
\begin{lemma}\label{Lem:ConjugateSmalldegree}Let $K$ be any field, let $\mu\in K^{*}$, $f=(\mu x_1,\mu^{-1} x_2)\in \SAut(\A^2_K)$, and $g\in \SAut(\A^2_K)$ be a conjugate of $f$ in this group. 
Then, there exists  $\alpha\in \SAut(\A^2_K)$ such that $$g=\alpha^{-1} f \alpha\mbox{ and }\deg \alpha \le \deg g.$$
\end{lemma}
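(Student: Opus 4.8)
The plan is to exploit the amalgamated product structure of $\SAut(\A^2_K)=\SAff(\A^2_K)*_{\SAff(\A^2_K)\cap\SJ(\A^2_K)}\SJ(\A^2_K)$, treating $f=(\mu x_1,\mu^{-1}x_2)$ as an elliptic-type element fixing the origin. Write a conjugating element $\alpha$ with $g=\alpha^{-1}f\alpha$ as a reduced word $\alpha=c_k\cdots c_1$ alternating between the two factors. The degree of a reduced Hénon-type word is the product of the degrees of its syllables coming from $\SJ(\A^2_K)$ (as recalled in Remark~\ref{Rem:Henon}), so if $\alpha$ has any nontrivial $\SJ$-syllable of degree $>1$ then $\deg\alpha$ is genuinely large; I want to show that if $\alpha$ is chosen of minimal length (or minimal degree) in its coset, the conjugate $\alpha^{-1}f\alpha$ cannot simplify unless $\alpha$ is already short. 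The first step is therefore to reduce to the case where $\alpha$ is a reduced word of minimal length among all $\alpha'$ with $\alpha'^{-1}f\alpha'=g$; equivalently, we may replace $\alpha$ by $\alpha h$ for any $h$ in the centraliser of $f$, and the centraliser of a diagonal $(\mu x_1,\mu^{-1}x_2)$ with $\mu\neq\pm1$ is precisely the diagonal torus (degree $1$), so this normalisation costs nothing in degree.

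Next I would analyse the reduced word $\alpha^{-1}f\alpha=c_1^{-1}\cdots c_k^{-1}\,f\,c_k\cdots c_1$ syllable by syllable. Since $f\in\SAff(\A^2_K)$, the only way the word can fail to be reduced is if $c_k$ lies in $\SJ(\A^2_K)$ and $c_k^{-1}f c_k$ again lands in $\SAff(\A^2_K)$ — i.e.\ $c_k^{-1}f c_k\in\SAff(\A^2_K)\cap\SJ(\A^2_K)$; this is exactly the mechanism used in the proof of Lemma~\ref{Lemm:SJorSAutSame}. Geometrically, $f$ acts on $\p^2_K$ fixing the line at infinity and the two points $[0:1:0]$, $[0:0:1]$; for $c_k^{-1}fc_k$ to preserve the pencil through $[0:1:0]$, the Jonqui\`eres element $c_k$ (which fixes $[0:1:0]$) must conjugate $f$ into the parabolic-fixing subgroup, and since $f$ is diagonalisable with the only eigenline configuration being the coordinate lines, a short computation forces $c_k$ to be \emph{linear} — an element of $\SAff(\A^2_K)\cap\SJ(\A^2_K)$, which can be absorbed into $c_{k-1}$, contradicting reducedness. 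Iterating this, I conclude that either $\alpha\in\SAff(\A^2_K)$ (a linear conjugation, $\deg\alpha=1\le\deg g$), or $\alpha\in\SJ(\A^2_K)$ and $g=\alpha^{-1}f\alpha\in\SJ(\A^2_K)$.

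So the remaining case is $\alpha\in\SJ(\A^2_K)$, say $\alpha=(ax_1+P(x_2),a^{-1}x_2+b)$, and one simply computes $g=\alpha^{-1}f\alpha$ explicitly and reads off $\deg g$ versus $\deg\alpha=\max(1,\deg P)$. Conjugating $(\mu x_1,\mu^{-1}x_2)$ by such a triangular map sends $P(x_2)\mapsto$ (roughly) $\mu^{-1}P(x_2)-P(\mu^{-1}x_2)$ up to the affine change in $x_2$, so the component of $g$ has degree exactly $\deg P$ unless catastrophic cancellation occurs; and cancellation of the top term happens only when $\mu^{-1}=\mu^{-\deg P}$, i.e.\ $\mu^{\deg P-1}=1$ — but in that situation one can lower $\deg P$ by a further triangular conjugation (exactly the normalisation in Lemma~\ref{Lem:Fourfamilies}(iii)), so after such reductions we may assume $\deg g=\deg\alpha$, giving $\deg\alpha\le\deg g$ as desired. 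The main obstacle I anticipate is the bookkeeping in this last step when $\mu$ is a root of unity (the Family $(iii)$ phenomenon): one must be careful that reducing $\deg P$ via conjugation does not increase the word length or move $\alpha$ out of $\SJ(\A^2_K)$, but since all these conjugations are themselves triangular this stays within the Jonqui\`eres group, and the degree of $g$ is an invariant of the conjugacy class, so the bound follows once the minimal such $\alpha$ is reached.
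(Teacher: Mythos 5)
Your overall strategy (reduce a conjugating word $\alpha$ in the amalgamated product and compare the syllable structure of $\alpha$ with that of $g=\alpha^{-1}f\alpha$) is the same as the paper's, but two steps as written do not hold up. First, the claim that a cancellation $c_k^{-1}fc_k\in\SAff(\A^2_K)\cap\SJ(\A^2_K)$ ``forces $c_k$ to be linear'' is false precisely when $\mu$ is a root of unity: for $c_k=(x_1+P(x_2),x_2)$ one computes $c_k^{-1}fc_k=(\mu x_1+\mu P(x_2)-P(\mu^{-1}x_2),\mu^{-1}x_2)$, and the coefficient of $x_2^k$ in $\mu P(x_2)-P(\mu^{-1}x_2)$ is $p_k(\mu-\mu^{-k})$, which vanishes whenever $\mu^{k+1}=1$; e.g.\ for $\mu^3=1$ the nonlinear element $c_k=(x_1+x_2^2,x_2)$ satisfies $c_k^{-1}fc_k=f$. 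The same phenomenon shows your description of the centraliser of $f$ as the diagonal torus is wrong for such $\mu$ (and note the normalisation should be by \emph{left} multiplication $h\alpha$, $h$ in the centraliser, since $(\alpha h)^{-1}f(\alpha h)=h^{-1}gh\neq g$). The correct move here is not a contradiction with reducedness but a \emph{replacement}: the offending coefficients of $P$ contribute nothing to $c_k^{-1}fc_k$, so one deletes them from $\alpha$ without changing $g$, strictly lowering $\deg\alpha$ --- this is exactly what the paper's proof does (``we can kill the highest coefficient of $P$ without changing $j'=j^{-1}fj$'').

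Second, the dichotomy you land on --- ``either $\alpha\in\SAff(\A^2_K)$ or $\alpha\in\SJ(\A^2_K)$'' --- is a non sequitur and is false: for $\alpha=aj$ with $a\in\SAff(\A^2_K)\setminus\SJ(\A^2_K)$ and $j\in\SJ(\A^2_K)\setminus\SAff(\A^2_K)$ nonlinear, if $a^{-1}fa\notin\SJ(\A^2_K)$ then $g=j^{-1}(a^{-1}fa)j$ is a H\'enon automorphism and no conjugator of length $\le 1$ exists. This reduced-word case of length $\ge 2$ is the generic one, and it is exactly where the quantitative input is needed: once the word $c_1^{-1}\cdots c_k^{-1}f'c_k\cdots c_1$ is reduced, Remark~\ref{Rem:Henon} (multiplicativity of degrees along reduced words) gives $\deg g=\deg(f')\cdot\prod\deg(j_i)^2\ge\deg(\alpha)^2\ge\deg\alpha$. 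You assemble this inequality only in the case $\alpha\in\SJ(\A^2_K)$ (your third paragraph, which is essentially the paper's computation for the innermost Jonqui\`eres syllable), but never for the main case, so the proof as written does not cover most conjugates of $f$. Repairing it amounts to reorganising your argument into the paper's induction: peel off the innermost syllable, either shorten $\alpha$ (absorbing linear elements, or deleting useless coefficients of $P$ as above) or certify that the word for $g$ is reduced and conclude by multiplicativity of degrees.
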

\begin{proof}
We write $g=\alpha^{-1} f\alpha$ for some $\alpha\in \SAut(\A^2_K)$, and write $\alpha$ as a product of elements of $\SAff(\A^2_K)$ and $\SJ(\A^2_K)$, in a reduced way (no two consecutive elements belong to the same group). 
Note that $f$ is conjugate to $f^{-1}$ by $(x_2,-x_1)$, so we can easily exchange $f$ with $f^{-1}$ if needed.

If $\alpha$  starts with an element $a\in \SAff(\A^2_K)$, then $a'=a^{-1}ja\in \SAff(\A^2_K)$. If $a'$ does not belong to $\SJ(\A^2_K)$, then 
$\deg(g)=\deg(\alpha)\deg(\alpha^{-1})=\deg(\alpha)^2$ (the degree is the product of the Jonqui\`eres elements occuring, see Remark~\ref{Rem:Henon}). If $a'$ belongs to $\SJ(\A^2_K)$, then it is conjugate to $f$ or $f^{-1}$ in $\SJ(\A^2_K)$ (Proposition~\ref{Prop:ConjclassesSJ}), we can thus replace $a$ with an element of $\SJ(\A^2_K)$ and either decrease the length of $\alpha$ or reduces to the case $\alpha\in \SJ(\A^2_K)$.

Suppose now that $\alpha$ starts with an element $j\in \SJ(\A^2_K)$. Replacing $j$ with $\rho j$, where $\rho$ is diagonal, we can assume that $j=(x_1+P(x_2),x_2+c)$, for some $P\in K[x_2]$ and $c\in K$, we obtain thus $$j'=j^{-1} fj=(\mu x_1+P(x_2)\mu-P(x_2+c(\mu^{-1}-1)),\mu^{-1} x_2+c(\mu^{-1}-1))\in \SJ(\A^2_K).$$
We have thus  $\deg(P(x_2))\ge P(x_2)\mu-P(x_2+c(\mu^{-1}-1))$, and if equality does not hold we can kill the highest coefficient of $P$ without changing $j'=j^{-1} f j$. We reduce then to the case where $\deg j'=\deg j$. If this degree is $1$, then $j\in \SAff(\A^2_K)$ and we can reduce the length of $\alpha$, or obtain $\deg(\alpha)=1$. The remaining case is when $\deg j'=\deg j$. The result is then obtained if $\alpha=j$. Otherwise, $\alpha=ja_1j_1\dots$ is a reduced writing, as well as $g=\dots (j_1)^{-1}(a_1)^{-1} j' a_1j_1\dots$. We again find $\deg g\ge \deg \alpha$.\end{proof}
\begin{proposition}\label{Prp:Conjclassdiagclosed}
Let $\mu\in \k^{*}$, $f=(\mu x_1,\mu^{-1} x_2)\in \SAut(\A^2_\k)$ and $d\ge 1$. The set $$C(f)=\{gfg^{-1}\mid g\in \Aut(\A^2_\k)\}=\{gfg^{-1}\mid g\in \SAut(\A^2_\k)\}$$
is closed in $\SAut(\A^2_\k)$.
\end{proposition}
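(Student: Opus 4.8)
The plan is to reduce the statement, using Lemma~\ref{Lem:Shafa}, Lemma~\ref{Lem:ConjugateSmalldegree} and the valuative criterion of Lemma~\ref{Lemm:Valuative}, to a single assertion about a $\k((t))$-point, and then to identify the resulting limit through the classification of Proposition~\ref{Prop:ConjclassesSJ}. By Lemma~\ref{Lem:Shafa}$(4)$ it suffices to show that $C(f)\cap\SAut(\A^2_\k)_{\le N}$ is closed in the affine variety $\SAut(\A^2_\k)_{\le N}$ for every $N\ge 1$. By Lemma~\ref{Lem:ConjugateSmalldegree}, every conjugate of $f$ of degree $\le N$ is of the form $\alpha^{-1}f\alpha$ with $\deg\alpha\le N$; hence $C(f)\cap\SAut(\A^2_\k)_{\le N}$ is exactly the image of the morphism $\varrho\colon W_N\to\SAut(\A^2_\k)_{\le N}$, $\alpha\mapsto\alpha^{-1}f\alpha$, where $W_N\subseteq\SAut(\A^2_\k)_{\le N}$ is the locally closed subset on which $\deg(\alpha^{-1}f\alpha)\le N$. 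This image is constructible, so by Lemma~\ref{Lemm:Valuative} it is closed provided that for every $\varphi\in W_N(\k((t)))$ such that $h:=\varphi^{-1}f\varphi$ has no pole at $t=0$, the element $h(0)$ lies in the image of $\varrho$.

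Since $\varphi\in W_N(\k((t)))$ we have $\deg h\le N$, so $\deg h(0)\le N$ by Lemma~\ref{Lem:FormalInverse}, and $\deg\varphi\le N$. Consequently, if we know that $h(0)$ is conjugate to $f$ in $\SAut(\A^2_\k)$, then Lemma~\ref{Lem:ConjugateSmalldegree} gives $h(0)=\beta^{-1}f\beta$ with $\deg\beta\le\deg h(0)\le N$, so $\beta\in W_N$ and $h(0)=\varrho(\beta)$ lies in the image. The whole proposition therefore reduces to the claim: \emph{$h(0)$ is conjugate to $f$ in $\SAut(\A^2_\k)$}. By construction $h(0)\in\overline{\varrho(W_N)}=\overline{C(f)\cap\SAut(\A^2_\k)_{\le N}}$, which is contained in the closed subset $\SAut(\A^2_\k)_{\mathrm{alg}}$ (Corollary~\ref{Coro:Algclosed}); so $h(0)$ is algebraic and, by Lemma~\ref{Lem:Henon} and Proposition~\ref{Prop:ConjclassesSJ}, it is conjugate to a representative of one of the Families $(i)$--$(iv)$. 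It remains to show that this representative lies in Family $(i)$ with parameter $\mu^{\pm 1}$.

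To do this I would use two invariants that are stable under specialisation. First, if $\mu$ is a primitive $m$-th root of unity then $f^m=\mathrm{id}$, hence $h^m=\mathrm{id}$ over $\k((t))$ and so $h(0)^m=\mathrm{id}$; the orders of the representatives of Families $(ii)$--$(iv)$ are, in characteristic $p>0$, always divisible by $p$ and hence never equal to $m$ (which is prime to $p$, there being no primitive $p$-th root of unity), and infinite in characteristic $0$; this forces $h(0)$ into Family $(i)$. Second, $h$ fixes the point $p:=\varphi^{-1}(0,0)\in\A^2_{\k((t))}$, and its differential $D_p h$ is conjugate over $\k((t))$ to $D_0 f=\mathrm{diag}(\mu,\mu^{-1})$, hence has constant trace $\mu+\mu^{-1}$ and determinant $1$; when $p$ has no pole at $t=0$, the point $p(0)$ is fixed by $h(0)$ and $D_{p(0)}h(0)\in\SL(2,\k)$ has characteristic polynomial $(x-\mu)(x-\mu^{-1})$, which by the observation on derivatives at a fixed point recalled in the proof of Proposition~\ref{Prop:ConjclassesSJ} excludes Family $(ii)$ and, when $\mu$ is not a root of unity, Family $(iii)$ as well, while Family $(iv)$ is excluded because its representatives have no fixed point in $\A^2$; reading off the eigenvalues then pins the parameter in Family $(i)$ to $\mu^{\pm1}$.

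The main obstacle is the case in which $p=\varphi^{-1}(0,0)$ has a pole at $t=0$, that is, the tracked fixed point escapes to infinity and the differential argument above is no longer available. To handle it I would analyse $\varphi$ via the amalgamated-product structure of $\SAut(\A^2_{\k((t))})$, keeping track of $t$-adic valuations as in the proof of Lemma~\ref{Lem:FormalInverse}, so as to absorb the polar part of $\varphi$ into the centraliser of $f$ (which contains the diagonal torus) and reduce to the previous case. A second delicate point is the degenerate subcase $\mu=-1$, i.e.\ $f=(-x_1,-x_2)$, where $D_{p(0)}h(0)$ has the sole eigenvalue $-1$: here one observes that $D_p h$ is conjugate to the central element $-\mathrm{id}$ of $\SL(2,\k((t)))$, hence equals $-\mathrm{id}$ identically, so $h(0)$ is not the identity; being of order dividing $2$, it is then conjugate to $(-x_1,-x_2)=f$ by the classification. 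Carrying out these two reductions rigorously is the technical core of the argument.
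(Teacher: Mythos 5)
Your reduction is the same as the paper's: Lemma~\ref{Lem:Shafa}, Lemma~\ref{Lem:ConjugateSmalldegree} and the valuative criterion bring everything down to showing that the specialisation $h(0)$ of $h=\varphi^{-1}f\varphi$ is conjugate to $f$. But the identification of $h(0)$ is exactly where your argument has a genuine gap, and it is the point you yourself flag as ``the technical core'': when the tracked fixed point $\varphi^{-1}(0,0)$ has a pole at $t=0$ you have no differential to examine, and your first invariant (the order) only applies when $\mu$ is a root of unity --- and even then it neither excludes the limit $h(0)=\mathrm{id}$ (such collapses to the identity do occur for other degenerations, cf.\ Proposition~\ref{Prop:IVtoI}) nor pins the parameter $\nu$ inside Family $(i)$, since the order may drop under specialisation. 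Your proposed remedy --- decomposing $\varphi$ in the amalgamated product over $\k((t))$ and ``absorbing the polar part into the centraliser'' --- is not carried out, and it is far from clear that it can be: the amalgamated-product decomposition over $\k((t))$ has no useful compatibility with the $t$-adic valuation, so there is no evident normalisation that removes the poles of $\varphi$. In the non-root-of-unity case your whole proof rests on the differential at the fixed point, so this gap is fatal there as well.

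The paper closes the gap with two ideas absent from your proposal. First, in place of the differential at a fixed point it uses the eigenvalue $\mu$ of the pullback action on the polynomial ring: $v=(\varphi^{-1})^{*}(x_1)$ satisfies $h^{*}(v)=\mu v$, and after rescaling $v$ by an element of $\k(\Gamma)^{*}$ one may assume it specialises to a nonzero polynomial, so $\mu\neq 1$ is an eigenvalue of $h(0)^{*}$ on $\k[x_1,x_2]$. This invariant survives specialisation with no hypothesis on fixed points and at once rules out the identity and Families $(ii)$ and $(iv)$ (whose pullback actions are unipotent), while Family $(iii)$ is excluded by the order argument since $\mu$ would then be a power of $\zeta$. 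Second, once $h(0)=(\nu x_1,\nu^{-1}x_2)$ with $\nu\neq 1$ is known, the paper proves that the fixed point cannot escape to infinity: the fixed locus $F=\{(c,x)\in\Gamma\times\A^2\mid \hat{\varphi}(c)(x)=x\}$ is closed and cut out by two equations, so each of its components has dimension at least $1$; since every fibre over $\Gamma$, including the one over $p$, is a single point, the projection $F\to\Gamma$ is an isomorphism and the fixed point extends over $p$. Only then does the differential argument you describe become available and force $\nu=\mu^{\pm 1}$. Without these two steps your proof does not go through.
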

\begin{proof}
The equality between the two sets is obvious: if $g\in \Aut(\A^2_\k)$, we have $g'=g\tau\in \SAut(\A^2_\k)$ for some diagonal $\tau\in \Aut(\A^2_\k)$, so $gfg^{-1}=g'f{g'}^{-1}$. We can moreover assume that $\mu\not=1$, otherwise the result is trivial.
We fix some integer $d\ge 1$, write $Z=\SAut(\A^2_\k)_{\le d}$ and will show that $C(f)\cap Z$ is closed in $Z$ (this gives the result by Lemma~\ref{Lem:Shafa}). 

We consider the morphism of algebraic varieties $Z\to \Aut(\A^2_\k)_{\le d^2}$  given by $g\mapsto gfg^{-1}$ and denote by $Y\subset \SAut(\A^2_\k)_{\le d}$ the preimage of $Z$. This gives rise to a morphism $\varphi \colon Y\to Z$, whose image is equal to $C(f)\cap Z$ by Lemma~\ref{Lem:ConjugateSmalldegree}.  By Corollary~\ref{Coro:LimitConjugateValuation}, it suffices to take an irreducible $\k$-curve $\Gamma$, a rational map $\iota\colon \Gamma\dasharrow Y$ such that $\hat{\varphi}=\varphi\circ \iota\colon \Gamma\dasharrow Z$ is defined at a smooth point $p\in \Gamma$, and to show that the image $g=\hat{\varphi}(p)\in Z$ belongs to $C(f)$.

Note that $\iota$ corresponds to an element $\alpha\in \Aut(\A^2_{\k(\Gamma)})$, and that $\hat{\varphi}$ to the element $\alpha f \alpha^{-1}\in \Aut(\A^2_{\k(\Gamma)})$, which is defined at $p$ and whose value at $p$ corresponds to $g$. The set of algebraic elements being closed (Corollary~\ref{Coro:Algclosed}), the element $g$ is conjugate to an element of $\SJ(\A^2_\k)$ (Remark~\ref{Rem:AlgebraicSJ}). Looking at the actions on $\k(\Gamma)[x_1,x_2]$, we find that $f^{*}(x_1)=\mu x_1$ so $(\alpha f \alpha^{-1})^*(v)=\mu v$, where $v=(\alpha^{-1})^*(x_1)\in \k(\Gamma)[x_1,x_2]$. Replacing $v$ with $\lambda v$, where $\lambda \in \k(\Gamma)$, does not change the equation $(\alpha f \alpha^{-1})^*(v)=\mu v$. We can therefore assume that $\alpha\in \mathcal{O}_p(\Gamma)[x_1,x_2]$ and that its value at $p$ is not zero. This shows that $\mu$ is also an eigenvalue of $g^*$. By Lemma~\ref{Lem:Fourfamilies}, $g$ is conjugate in $\SJ(\A^2_\k)$ to $(\nu x_1,\nu^{-1}x_2)$, for some $\nu\in \k\setminus \{0,1\}$, or to $(\zeta x_1+(x_2)^{m-1}P((x_2)^m),\zeta^{-1}x_2)$ for some primitive $m$-th root of unity $\zeta$, and $P\in \k[x_2]\setminus\{0\}$, and $\mu$ belongs to the subgroup of $(\k^*,\cdot)$ generated by $\nu$ (respectively of $\zeta$). Let us observe that the second case is not possible. Indeed, otherwise we would have $f^m=\mathrm{id}$, so $(\alpha f \alpha^{-1})^m=\mathrm{id}$ and thus $g^m=\mathrm{id}$, which is not the case since $P\not=0$. We can thus assume, after composing $\alpha$ with an element of $\SAut(\A^2_\k)$, that $g=(\nu x_1,\nu^{-1}x_2)$ for some $\nu \in \k\setminus \{0,1\}$.

Denote by $U\subset C$ the dense open subset where $\iota$ is defined. Replacing $C$ with $U\cup \{p\}$, we can assume that $U=C\setminus \{p\}$ (if $p\in U$, the result is obvious), and thus that $\hat{\varphi}$ is a morphism $\hat{\varphi}\colon C\to Z$. Denote by $\kappa\colon C\dasharrow \A^2_\k$ the rational map given by $\kappa(u)=\iota(u)(0,0)$, when $u\in U$ (the image of the origin of~$\A^2_\k$). 

We now prove that $\kappa$ is defined at $p$. To do this, we define $F\subset C\times \A^2_\k$ to be the closed set $F=\{(c,(x_1,x_2))\mid \hat{\varphi}(c)(x_1,x_2)=(x_1,x_2)\}$. Since $F$ is given by only two equations, each irreducible component of $F$ has dimension at least $1$. The intersection of $F$ with $u\times \A^2_\k$ yields one point, for each $u\in C$, hence the projection to $C$ yields an isomorphism $\pi\colon F\to C$. The rational map $C\dasharrow F$ given by $u\mapsto (u,\kappa(u))$ corresponding to the identity, it is thus defined at $p$.

The map $(x_1,x_2)\mapsto (x_1,x_2)+\kappa(u)$ corresponds therefore to an element of $\Aut(\A^2_{\mathcal{O}(C)})$, and replacing $\alpha$ with its composition by this one, we can assume that $\alpha$ fixes the origin. We obtain thus that the derivative of $\alpha f \alpha^{-1}$ at the origin is conjugate to $f$ by an element of $\SL(2,\k)$, for each $u\in U$, so its eigenvalues are $\mu$ and $\mu^{-1}$, for any point of $U$. At the point $p$, we obtain $g=(\nu x_1, \nu^{-1} x_2)$, which implies that $\nu=\mu^{\pm 1}$, so that $g$ and $f$ are conjugate in $\SAut(\A^2_\k)$ as we wanted.\end{proof}

\begin{proof}[Proof of Theorem~$\ref{Thm:Dimension2}$]
By Lemma~\ref{Lem:Henon} and Remarks~\ref{Rem:Henon} and~\ref{Rem:AlgebraicSJ}, an algebraic element of $\SAut(\A^2_\k)$ is conjugated to an element of $\SJ(\A^2_\k)$ and a non-algebraic element is conjugate to a H\'enon automorphism, which is dynamically regular. In the latter case, the conjugacy class is closed by  Theorem~\ref{Thm:Regular}, this gives $(3)$.

If $f\in \SAut(\A^2_\k)$ is diagonalisable, its conjugacy class is closed by  Proposition~\ref{Prp:Conjclassdiagclosed}.

If $f\in \SAut(\A^2_\k)$ is algebraic but not diagonalisable, it is conjugate to an element of the Families $(ii)$, $(iii)$ or $(iv)$ of Lemma~\ref{Lem:Fourfamilies}. The existence of the degeneration stated in $(2)$ is given in $\S\ref{SubSecFamilyiiandiiitoi}$ for families $(ii)$, $(iii)$ and in Proposition~\ref{Prop:IVtoI} for Family $(iv)$.
\end{proof}
\end{document}